\newtheorem{theorem}{Theorem}
\newtheorem{proposition}[theorem]{Proposition}
\newtheorem{lemma}[theorem]{Lemma}
\newtheorem*{theorem*}{Theorem}
\newtheorem*{theorem**}{{``Theorem''}}
\theoremstyle{definition}
\def\Xint#1{\mathchoice
{\XXint\displaystyle\textstyle{#1}}%
{\XXint\textstyle\scriptstyle{#1}}%
{\XXint\scriptstyle\scriptscriptstyle{#1}}%
{\XXint\scriptscriptstyle\scriptscriptstyle{#1}}%
\!\int}
\def\XXint#1#2#3{{\setbox0=\hbox{$#1{#2#3}{\int}$ }
\vcenter{\hbox{$#2#3$ }}\kern-.6\wd0}}
\def\dashint{\Xint-}
\definecolor{Yellow}{rgb}{0.95,0.9,0.0} 
\definecolor{Red}{rgb}{0.8,0.1,0.1}
\definecolor{Green}{rgb}{0.1,0.65,0.2}
\definecolor{Blue}{rgb}{0.1,0.1,0.8}
\definecolor{Purple}{rgb}{0.7,0.1,0.7}
\definecolor{Grey}{rgb}{0.6,0.6,0.6}
\newcommand{\domain}{{\mathbb{T}^d}}
\newcommand{\Ghd}{G_{h,d}}
\newcommand{\Lh}{L^2(G_{h,d})}
\newcommand{\f}{\boldsymbol}
\newcommand{\m}{\mbox{d}}
\newtheorem{corol}[theorem]{Corollary}
\newtheorem{prop}[theorem]{Proposition}
\theoremstyle{definition}
\newtheorem{rem}[theorem]{Remark}
\providecommand{\customgenericname}{}
\newcommand{\newcustomtheorem}[2]{%
  \newenvironment{#1}[1]
  {%
   \renewcommand\customgenericname{#2}%
   \renewcommand\theinnercustomgeneric{##1}%
   \innercustomgeneric
  }
  {\endinnercustomgeneric}
}
\begin{document}

\title[Rigorous derivation of the Dean--Kawasaki equation]{The Dean--Kawasaki equation and the structure of density fluctuations in systems of diffusing particles}

\author{Federico Cornalba}
\address{Institute of Science and Technology Austria (ISTA), Am~Campus~1, 
3400 Klosterneuburg, Austria}
\email{federico.cornalba@ista.ac.at}
\author{Julian Fischer}
\address{Institute of Science and Technology Austria (ISTA), Am~Campus~1, 
3400 Klosterneuburg, Austria}
\email{julian.fischer@ista.ac.at}

\begin{abstract}
The Dean--Kawasaki equation -- a strongly singular SPDE -- is a basic equation of fluctuating hydrodynamics; it has been proposed in the physics literature to describe the fluctuations of the density of $N$ independent diffusing particles in the regime of large particle numbers $N\gg 1$.
The singular nature of the Dean--Kawasaki equation presents a substantial challenge for both its analysis and its rigorous mathematical justification. Besides being non-renormalisable by the theory of regularity structures by Hairer et al., it has recently been shown to not even admit nontrivial martingale solutions.

In the present work, we give a rigorous and fully quantitative justification of the Dean--Kawasaki equation by considering the natural regularisation provided by standard numerical discretisations: We show that structure-preserving discretisations of the Dean--Kawasaki equation may approximate the density fluctuations of $N$ non-interacting diffusing particles to arbitrary order in $N^{-1}$ (in suitable weak metrics).
In other words, the Dean--Kawasaki equation may be interpreted as a ``recipe'' for accurate and efficient numerical simulations of the density fluctuations of independent diffusing particles.
\end{abstract}

\maketitle

\section{Introduction}

The Dean--Kawasaki equation
\begin{align}
\label{DeanKawasaki}
\partial_t\rho = \frac{1}{2} \Delta \rho + N^{-1/2} \nabla \cdot (\sqrt{\rho} \hspace{0.05pc}\f{\xi}) 
\end{align}
-- with $\f{\xi}$ denoting a vector-valued space-time white noise -- has been proposed by Dean \cite{Dean} and Kawasaki \cite{Kawasaki} as a model for density fluctuations in a system of $N$ indistinguishable particles undergoing diffusion in the regime of large particle numbers $N\gg 1$.
Its mathematical analysis is complicated by its highly singular nature: A scaling argument shows that \eqref{DeanKawasaki} is not renormalisable by the theory of regularity structures by Hairer et al., even in one spatial dimension $d=1$.

Recently, Konarovskyi, Lehmann, and von~Renesse \cite{KonarovskyiVonRenesse} have obtained a striking rigidity result for the Dean--Kawasaki equation \eqref{DeanKawasaki}: They show that the only martingale solutions to \eqref{DeanKawasaki} are of the form of an empirical measure for $N$ independent Brownian motions
\begin{align}
\label{EmpiricalMeasure}
\mu_t^N:=\frac{1}{N}\sum_{k=1}^N \delta_{\f{w}_k(t)},
\end{align}
where the $\{\f{w}_k\}_{k=1}^{N}$ denote the $N$ independent Brownian motions. In particular, no solution exists for non-integer values of $N$. This result may be viewed as casting doubts on the mathematical meaningfulness of the Dean--Kawasaki equation: It amounts to stating that the Dean--Kawasaki equation is just a mathematically complex way of rewriting the diffusion of $N$ particles.
In turn, this naturally raises the question what the advantages of the Dean--Kawasaki equation \eqref{DeanKawasaki} might be over the particle formulation of diffusion \eqref{EmpiricalMeasure} from the point of view of physics. 

In the present work, we provide a rigorous justification for the Dean--Kawasaki equation: We show that standard numerical discretisations of the Dean--Kawasaki equation \eqref{DeanKawasaki} -- such as finite difference or finite element schemes -- provide accurate descriptions of the density fluctuations in a system of $N$ diffusing particles if measured in suitably weak metrics. Roughly speaking, we show that, under certain conditions, the solutions $\rho_h$ to the discretised Dean--Kawasaki equation achieve the approximation quality
\begin{align}\label{d:5}
& d_{weak,2j-1}\big(\rho_h-\mathbb{E}[\rho_h],\mu^N-\mathbb{E}[\mu^N]\big) \lesssim C(j) \left(\mean{\|\rho^{-}_h\|} + h^{p+1} + N^{-j/2}\right),
\end{align}
where $j\in \mathbb{N}$ is arbitrary, $h$ is the spatial discretisation parameter, $p+1$ is the order of convergence of the numerical scheme in the Sobolev space $H^{-1}$, and $d_{weak,2j-1}$ is a suitable weak metric of negative Sobolev type.
In particular, the accuracy is of arbitrarily large order in $N^{-1/2}$ and hence only limited by the numerical discretisation error and the negative part $\rho^{-}_h$. In addition, we show that $\mean{\|\rho^-_h\|}$ decays exponentially fast in -- roughly speaking -- $(hN^{1/d})^{1/2}$, demonstrating that the term becomes quickly negligible in the scaling regime
\begin{align}
\label{ScalingRegime}
h\gg N^{-1/d}
\end{align}
(where we have dropped logarithmic corrections in $N$ and contributions on the final time horizon for the sake of this introductory exposition).
In a nutshell, the bound \eqref{d:5} implies that the Dean--Kawasaki equation can be used as a ``recipe'' for accurate simulations of density fluctuations in systems of diffusing particles.

Note that our scaling regime \eqref{ScalingRegime} is not an actual restriction in the context of numerical simulations: It ensures that the average number of particles per cell is strictly larger than one. In fact, in the opposite regime $h\leq N^{-1/d}$, the direct simulation of particles would become less expensive than the approximation by the Dean--Kawasaki equation, as the numerical effort for the Dean--Kawasaki equation is strictly larger than $h^{-d}$.

While the Dean--Kawasaki equation correctly describes the fluctuations around the mean-field limit to arbitrarily large order in $N^{-1/2}$, the well-known linearised description of fluctuations given by the solution $\hat \rho$ to
\begin{equation}
 \label{b:3}
  \left\{\quad
    \begin{aligned}
    \!\!\!\!\! \partial_t\hat\rho&= \frac{1}{2} \Delta \hat \rho + N^{-1/2} \nabla \cdot (\sqrt{\bar \rho}  \hspace{0.05pc}\f{\xi}),\\
    \!\!\!\!\! \hat \rho(\cdot,0) &=\rho_0
    \end{aligned}
  \right.
\end{equation}
is limited to the approximation quality $d_{weak}(\hat \rho-\mathbb{E}[\hat\rho],\mu^N-\mathbb{E}[\mu^N])\leq C N^{-1}$.
Here, $\bar \rho$ denotes the mean-field limit given as the solution to
\begin{equation*}
  \left\{\quad
    \begin{aligned}
    \!\!\!\!\! \partial_t\bar\rho&= \frac{1}{2} \Delta \bar \rho,\\
    \!\!\!\!\! \bar \rho(\cdot,0) &=\rho_0.
    \end{aligned}
  \right.
\end{equation*}
In fact, the linearised description $\hat\rho$ only captures the leading-order fluctuation correction to the mean-field limit correctly and hence carries a relative error of order $N^{-1/2}$ with respect to the fluctuation scaling. We provide numerical evidence of such difference between the two models, and we also numerically verify convergence rates for suitable discretisations of the Dean--Kawasaki model \eqref{DeanKawasaki}.

\subsection{Related Literature}
The theory of fluctuating hydrodynamics describes fluctuations in interacting particle systems in the regime of large particle numbers using suitable SPDEs; 
see e.\,g.\ \cite{SpohnBook}.
In its form \eqref{DeanKawasaki}, the Dean--Kawasaki equation describes non-interacting particles, with similar equations being available for weakly interacting particles undergoing overdamped Langevin dynamics. In the recent contribution \cite{djurdjevac2022weak}, the authors also address quantitative fluctuation bounds in the non-interacting particle case, but by means of a suitable approximated SPDE model rather than a numerical scheme. While their setting grants several well-posedness results (non-negativity of the solution, comparison principles, entropy estimates) and allows to consider initial particle profiles which are more general than those treated here, their relative fluctuation error is however limited to $N^{-1/(d/2+1)}\log N$, while the rate of fluctuations in \eqref{d:5} is -- in suitable metrics -- arbitrarily high. 

For a more general particle setting, the SPDE of fluctuating hydrodynamics for the zero range process given by 
\begin{align}
\label{FluctHydZeroRange}
\partial_t\rho = \Delta (\Phi(\rho)) + \nabla \cdot \left(\sqrt{\Phi(\rho)} \hspace{0.05pc}\f{\xi}\right)
\end{align}
has been addressed in \cite{DirrStamatakisZimmer}, 
and linked it to the large deviation principle for such process in a suitable thermodynamic setting.
A corresponding well-posedness result for truncated (low spatial frequency) noise and regularised nonlinearity has been obtained in \cite{FehrmanGessWellPosedness}, see also \cite{FehrmanGess2021}. 
In \cite{fehrman2022ergodicity}, the construction of random dynamical systems for conservative SPDE is discussed, together with well-posedness theory of invariant measures and mixing of the related Markov process.
In \cite{FehrmanGess}, a large deviation principle for regularised variants of \eqref{FluctHydZeroRange} is shown; in a suitable limit, the rate functional of such large deviations principle and the corresponding one of the interacting particle system are shown to approach each other.

The paper \cite{DirrFehrmanGess}, written independently of -- and simultaneously to -- the present manuscript, gives a rigorous justification of the fluctuating hydrodynamics SPDE associated with the symmetric simple exclusion process
\begin{align*}
\partial_t\rho = \Delta \rho  - \nabla \cdot \left(\sqrt{\rho(1-\rho)}\hspace{0.05pc}\f{\xi}\right).
\end{align*}
While in contrast to our work the authors in \cite{DirrFehrmanGess} only consider the continuum SPDE, they regularise it by truncating the noise for small spatial wavelengths. In a certain sense, this introduces a regularisation in the same spirit as our numerical approach; however, their truncation criterion is somewhat more restrictive than our condition $h\gg N^{-1/d}$. While they face a more challenging problem with the more complex noise intensity factor $\sqrt{\rho(1-\rho)}$ (whose square is not a linear function of the density $\rho$) and also prove convergence results for the rate functions for large deviation principles, they only establish a leading-order description of fluctuations in the low deviations regime. In other words, in contrast to our present work, they do not show superiority of fluctuating hydrodynamics to a linearised approach on fluctuations for the ``bulk'' of the probability distribution.

For recent numerical approaches to fluctuating hydrodynamics, we refer the reader e.\,g.\ to \cite{russo2019,PhysRevEDelong,DonevVandenEijndenGarciaBell, cornalba2022regularised, cornalba2023density,helfmann2021interacting, djurdjevac2022feedback, li2019harnessing, bavnas2020numerical} (in particular,  \cite{cornalba2023density} contains the extension of the current work to the case of weakly interacting particles).
Note that the small prefactor of the noise term in the Dean--Kawasaki equation \eqref{DeanKawasaki} enables the use of certain higher-order timestepping schemes \cite{SDELab}, a fact that we also use in our numerical simulations.

Concerning further mathematical results on Dean--Kawasaki models, the link between fluctuating hydrodynamics and Wasserstein geometry has long been understood, and extensively studied in several works, see for instance \cite{JackZimmer,SturmEtAl,konarovskyi2020dean, konarovskyi2017reversible, konarovskyi2019modified,konarovskyi2020conditioning, schiavo2022dirichlet}.

Dean--Kawasaki type models including the effects of inertia have been derived and analysed by the first author, Shardlow, and Zimmer \cite{CornalbaShardlowZimmer,CornalbaShardlowZimmer2,CornalbaShardlowZimmer3}.

The fluctuation-dissipation relation -- implicitly contained for instance in the Dean--Kawasaki equation -- may be used to recover macroscopic diffusion properties from fluctuations in finite particle number simulations, see for instance \cite{embacher2018computing,li2019harnessing}.
Outside of the realm of physics, the concept of fluctuating hydrodynamics has also been applied to systems of interacting agents, see e.\,g.\  \cite{helfmann2021interacting, djurdjevac2022feedback, kim2017stochastic}.

Finally, conservative stochastic PDEs have recently been shown to give optimal convergence rates in the mean-field limit approximation of stochastic interacting particle systems, such as those encountered in the stochastic gradient descent methods for overparametrised, shallow neural networks \cite{gess2022conservative}.

\begin{rem}
Given the nature of the metric $d_{weak,2j-1}$ in \eqref{d:5}, it is natural to ask whether or not the high-order fluctuation error of \eqref{d:5} could be formally derived from suitable a priori estimates of negative Sobolev type for the continuous Dean--Kawasaki model \eqref{DeanKawasaki}. An a purely formal level, testing the mild formulation of \eqref{DeanKawasaki}
\begin{align}\label{DKmild}
\rho(\f{x},t) = G(t, \cdot)\ast \rho(\cdot,0)(\f{x}) + \int_{0}^{t}{\int_{\mathbb{T}^d}G(t-s,\f{x}-\f{y})\nabla\cdot\left[\sqrt{\rho(\f{y},s)}\xi(\f{y},s)ds \right]d \f{y}}
\end{align}
-- where $G$ is the heat semigroup kernel -- with trigonometric functions, and performing elementary computations, one arrives at the a priori estimate
\begin{align}
\mean{\left\|\rho(\cdot,t)\right\|^2_{H^{-j}}} & \lesssim \mean{\left\|\rho(\cdot,0)\right\|^2_{H^{-j}}} + N^{-1}\|\rho(0)\|_{L^1}\label{APrioriNegSob},
\end{align} 
which is valid in the regime $j>d/2$. 

Despite its formal validity -- which, however, relies on the non-trivial negativity requirement for the density $\rho$ -- the inequality \eqref{APrioriNegSob} does not give any information beyond the leading order $N^{-1}$, and therefore is too weak an estimate to justify the high-order fluctuation error bound in \eqref{d:5}.
\end{rem}

\section{Main results and summary}

The methodology of this paper can be applied to several standard numerical discretisations of the Dean--Kawasaki model \eqref{DeanKawasaki}, including finite difference and finite element schemes. In the interest of brevity, we only focus on a finite difference discretisation: The corresponding results in the finite element case are given in Appendix \ref{FEM_Appendix}. Specifically, on the periodic domain $\mathbb{T}^d:=[-\pi,\pi)^d$, we denote the uniform square grid with spacing $h$ by $G_{h,d}$, the discrete inner product of $L^2(G_{h,d})$ by $(\cdot,\cdot)_h$, the interpolating operator on $G_{h,d}$ by $\mathcal{I}_h$, and define the distance $d_{-j}[\f{X},\f{Y}]$ between two $\mathbb{R}^M$-valued random variables as
\begin{align}\label{b:9}
d_{-j}[\f{X},\f{Y}]
:=
\sup_{\psi\colon \max_{1\leq \tilde j\leq j}\|D^{\tilde j} \psi\|_{L^\infty}\leq 1}
&
\left|\mean{\psi\left(\f{X}-\mathbb{E}[\f{X}]\right)}
-\mean{\psi\left(\f{Y}-\mathbb{E}[\f{Y}]\right)}
\right|.
\end{align}

Our first main result reads as follows. 
\begin{theorem}[Accuracy of description of fluctuations by the finite-difference discretised Dean--Kawasaki model of order $p+1\in\mathbb{N}$] 

\label{main1} 
Assume the validity of Assumption \ref{ass:1} (discretised differential operators), Assumption  \ref{ass:2} (Brownian particle system), Assumption \ref{ass:3} (scaling assumptions), and Assumption \ref{ass:4} (discretised mean-field limit), all given below. In particular, assume that the mean-field limit $\overline{\rho}_{h}$ in \eqref{e:8} satisfies $ \rho_{min}\leq \overline{\rho}_h \leq \rho_{max}$ for some positive $ \rho_{min},\rho_{max}$ on $[0,T]$.
 Let $\rho_h$ be the solution of the discretised Dean--Kawasaki model given in Definition \ref{def:1} on $[0,T]$. Set
\begin{align}\label{b:7}
\Theta :=
\left\{ \begin{array}{rl}\displaystyle
0, & \mbox{if the discretisation \eqref{eq:29} admits a maximum principle,}  \\ 
\displaystyle d/2+1, & \mbox{otherwise.}\end{array}
\right.
\end{align} 

Then, for any $j\in \mathbb{N}$, 
the discrete Dean--Kawasaki model \ref{def:1} captures the fluctuations of the empirical measure $\mu^N$ in the sense that, for any $\f{T}=(T_1,\ldots,T_M)\in [0,T]^M$ with $0\leq T_1\leq \dots\leq T_M$, the inequality
\begin{align*}
&d_{-(2j-1)}\left[
N^{1/2}
\begin{pmatrix}
(\rho_h(T_1), \mathcal{I}_h\varphi_1)_h
\\
\vdots
\\
(\rho_h(T_M), \mathcal{I}_h\varphi_M)_h
\end{pmatrix}
,~
N^{1/2}
\begin{pmatrix}
\langle\mu^N_{T_1}, \varphi_1\rangle
\\
\vdots
\\
\langle\mu^N_{T_M}, \varphi_M\rangle
\end{pmatrix}
\right]
\\&~~~~~~~~~~~~~~~~~~~
\leq C(M,p,j,\|\boldsymbol{\varphi}\|_{W^{p+\Theta+j+1,\infty}}, \rho_{min},\rho_{max},\f{T}) {\mean{\sup_{t\in[0,T]}\|\rho_h^-(t)\|_h^2}^{1/2}}
\\&~~~~~~~~~~~~~~~~~~~~~~
+ C(M,p,j,\|\boldsymbol{\varphi}\|_{W^{p+\Theta+j+1,\infty}}, \rho_{min},\rho_{max},\f{T}) h^{p+1}
\\&~~~~~~~~~~~~~~~~~~~~~~
+ C(M,p,j,\|\boldsymbol{\varphi}\|_{W^{p+\Theta+j+1,\infty}}, \rho_{min},\rho_{max},\f{T}) N^{-j/2}
\\&~~~~~~~~~~~~~~~~~~~
=: \mathrm{Err}_{neg} + \mathrm{Err}_{num} + \mathrm{Err}_{fluct,rel}
\end{align*}
holds for any $\boldsymbol{\varphi}=(\varphi_1,\dots,\varphi_M)\in [W^{p+\Theta+j+1,\infty}(\mathbb{T}^d)]^M$  such that $\|\varphi_m\|_{L^2}=1, \forall m=1,\dots,M$ and $\int_{\mathbb{T}^d}{\varphi_k\varphi_l\emph{\m}\f{x}}=0$ whenever $T_k=T_l$.
Finally, we have the a posteriori bound
$$
{\mean{\sup_{t\in[0,T]}\|\rho_h^-(t)\|_h^2}^{1/2}} \leq C\mathcal{E}\!\left(N,h\right),
$$
where we have set
\begin{align}\label{b:8}
& \mathcal{E}\!\left(N,h\right) := C(d,\rho_{min},\rho_{max}) \left\{\exp\bigg(-\frac{\rho_{min} N^{1/2}h^{d/2}}{C\rho_{max}^{1/2}}\bigg)
+ \exp\big(-ch^{-1}\big)\right\}.
\end{align}
\end{theorem}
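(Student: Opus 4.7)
The plan is to prove the $d_{-(2j-1)}$ bound by a backward Kolmogorov / generator-comparison strategy, exploiting the central feature of the Dean--Kawasaki equation: its formal generator, when applied to functionals of the form $F(\rho)=\psi((\rho,\mathcal{I}_h\f{\varphi})_h)$, is designed to match the generator of the empirical measure process $\mu^N$. For each admissible $\psi$ (with $\max_{\tilde j\leq 2j-1}\|D^{\tilde j}\psi\|_{L^\infty}\leq 1$), the target quantity
\begin{align*}
\mathbb{E}\big[\psi(N^{1/2}(\f{X}^h-\mathbb{E}\f{X}^h))\big]-\mathbb{E}\big[\psi(N^{1/2}(\f{X}^\mu-\mathbb{E}\f{X}^\mu))\big],
\end{align*}
where $\f{X}^h$ and $\f{X}^\mu$ denote the two random vectors in the claim, is estimated slab-by-slab on $[T_{m-1},T_m]$. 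On each slab one solves a backward Kolmogorov problem for the particle system producing a function $u(s,\cdot)$ such that $u(s,\mu^N_s)$ is a martingale terminating in the relevant $\psi_m$, then applies It\^o/Dynkin to $s\mapsto u(s,\rho_h(s))$ and bounds the time-integrated defect between the two generators. The iteration over $m=M,M-1,\dots,1$ allows the multi-time observable to be handled by propagating, at each step, a new effective test function encoding the remaining coordinates.

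The crucial step is the explicit generator comparison. It\^o applied to the $N$ Brownian paths $\f{w}_k$ gives, exactly,
\begin{align*}
\mathcal{L}_{\mu^N}\big[\psi(\langle\mu^N,\varphi\rangle)\big]=\tfrac{1}{2}\,\psi'(\cdot)\,\langle\mu^N,\Delta\varphi\rangle+\tfrac{1}{2N}\,\psi''(\cdot)\,\langle\mu^N,|\nabla\varphi|^2\rangle,
\end{align*}
while the It\^o expansion of the discretised Dean--Kawasaki SPDE yields an expression of the same form, with the discrete Laplacian and discrete gradient squared in place of their continuous counterparts. The two generators therefore differ only by (i) a numerical consistency error $(\rho,\Delta_h\mathcal{I}_h\varphi-\Delta\varphi)_h$ and its gradient analogue, giving $\mathrm{Err}_{num}=O(h^{p+1})$ through the assumed order of the scheme; (ii) a contribution proportional to $\|\rho_h^-\|_h$ produced by the symmetrised interpretation of $\sqrt{\rho_h}$, giving $\mathrm{Err}_{neg}$; and (iii) higher-order multilinear terms in the Taylor expansion of $u$ around the mean-field limit. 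These higher-order terms receive a factor $N^{-1/2}$ per additional derivative of $\psi$ (from the Dean--Kawasaki scaling), and since $\psi$ is probed only to order $2j-1$, iterating the backward flow produces a net $N^{-j/2}$ contribution. This also explains the regularity budget $W^{p+\Theta+j+1,\infty}$ on $\f{\varphi}$: $p+1$ derivatives are consumed by the numerical consistency, $j$ more by the backward iteration, and the extra $\Theta=d/2+1$ in the non-maximum-principle case by the Sobolev embedding needed to convert $L^2$- into $L^\infty$-bounds on the discrete solution.

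For the a posteriori bound on $\mathbb{E}[\sup_{t\in[0,T]}\|\rho_h^-(t)\|_h^2]^{1/2}$, I would combine a sub-Gaussian martingale concentration for the discrete SPDE with an exponential consistency bound for the scheme on the smooth mean-field limit. Heuristically, the noise-driven fluctuation $\rho_h-\bar\rho_h$ has pointwise standard deviation of order $(\rho_{max}/(Nh^d))^{1/2}$ at each grid node, so reaching the threshold $\bar\rho_h\geq\rho_{min}$ from below requires a deviation of $\gtrsim\rho_{min}(Nh^d/\rho_{max})^{1/2}$ standard deviations, which yields the first exponential in $\mathcal{E}(N,h)$; the second exponential $\exp(-ch^{-1})$ records the super-algebraic numerical error of a structure-preserving scheme on a smooth initial density bounded below.

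The principal obstacle will be the second step: tracking the higher-order terms of the iterated It\^o/Dynkin expansion of $u$ quantitatively enough to recover the full $N^{-j/2}$ rate for arbitrary $j$, while simultaneously (a) controlling the loss of derivatives of $u$ through the discrete backward flow with constants depending only on $\rho_{min},\rho_{max}$, and (b) coupling cleanly to the regularised noise coefficient near $\{\rho_h<0\}$ so that the $\rho_h^-$-error remains cleanly separated from the $N^{-j/2}$-error and does not contaminate either the regularity estimates or the generator matching.
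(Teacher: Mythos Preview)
Your high-level strategy is the paper's: compare the generators via It\^o's formula, exploit that the cross-variation of the Dean--Kawasaki noise is again a \emph{linear} functional of $\rho_h$ (your generator formula for $\mathcal{L}_{\mu^N}$ is exactly Block~1 in the paper), and iterate to trade factors of $N^{-1/2}$ for derivatives of $\psi$. The three error contributions you name --- numerical consistency, negative part, and the residual $N^{-j/2}$ from the recursion --- are precisely $\mathrm{Err}_{num}$, $\mathrm{Err}_{neg}$, $\mathrm{Err}_{fluct,rel}$.

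There is, however, a concrete gap in your derivative bookkeeping that would prevent you from reaching the sharp exponent $2j-1$. In the recursive step the It\^o correction produces $\partial_k\partial_l\psi$ multiplied by a new scalar argument; naively this costs \emph{two} derivatives of $\psi$ per gained factor $N^{-1/2}$, so after $j$ steps you would need $2j$ derivatives, not $2j-1$. The paper closes this gap by evolving $\psi$ backward in time on $\mathbb{R}^M$ by the Gaussian diffusion
\[
-\partial_t\psi^t=\tfrac12\sum_{k,l}\chi_{t\le T_k}\chi_{t\le T_l}\,\langle\mathbb{E}[\mu^N_t],\nabla\phi_k^t\cdot\nabla\phi_l^t\rangle\,\partial_k\partial_l\psi^t,
\]
whose heat-kernel smoothing recovers one derivative at the cost of the integrable singularity $(\Lambda_t^{-1}e_k\cdot e_k)^{1/2}$. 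The key technical lemma you are missing is the lower bound on the covariance $\Lambda_t$: this is exactly where the hypotheses $\|\varphi_m\|_{L^2}=1$ and $\int\varphi_k\varphi_l=0$ whenever $T_k=T_l$ enter (you never invoke them). Without that non-degeneracy, your backward Kolmogorov function $u$ has no smoothing in the $\psi$-variables and ``controlling the loss of derivatives of $u$'' --- which you flag as the main obstacle --- is not just hard but actually fails at the claimed regularity. The paper combines this smoothing with a mollification of $\psi$ at scale $N^{-j/2}$ to absorb the leftover $\|D\psi\|$ factors.

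Two smaller points. Your slab-by-slab reduction over $[T_{m-1},T_m]$ is workable but clumsier than the paper's device of carrying the full vector $\f{\phi}^t=(\phi_1^t,\dots,\phi_M^t)$ and letting the indicator $\chi_{t\le T_k}$ do the freezing; the simultaneous treatment is what makes the covariance $\Lambda_t$ a single $M\times M$ matrix and the smoothing argument clean. And your interpretation of the second exponential $\exp(-ch^{-1})$ in $\mathcal{E}(N,h)$ is incorrect: it does not encode numerical consistency of the mean-field limit, but rather arises from a martingale/maximal-inequality argument bridging finitely many time points $ih^\beta$ to the full interval $[0,T]$ in the pointwise control of $|\rho_h-\mathbb{E}[\rho_h]|$.
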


We make some observations in order to better illustrate the meaning of Theorem \ref{main1}:
\begin{itemize}[leftmargin=0.9 cm]
\item The quantities $(\rho_h(T_m),\mathcal{I}_h\varphi_m)_h$, 
and $\langle \mu_{T_m}^N, \varphi_m\rangle$ are rescaled with the factor $N^{1/2}$, as the natural order of density fluctuations is $N^{-1/2}$. In other words, our main error estimate basically provides an estimate for the relative error in the fluctuations.
\item The distances $d_{-j}[\f{X},\f{Y}]$ correspond to negative Sobolev norm differences of the probability measures on $\mathbb{R}^M$ given by the laws of $\f{X}$ and $\f{Y}$. In particular, it holds $d_{-1}[\f{X},\f{Y}]=\mathcal{W}_1[\f{X}-\mathbb{E}[\f{X}],\f{Y}-\mathbb{E}[\f{Y}]]$, 
where $\mathcal{W}_1$ is the $1$-Wasserstein distance.
\item The above estimates contain three types of error terms. The term $ \mathrm{Err}_{neg}$ quantifies the \emph{a priori} lack of knowledge concerning non-negativity of the solution $\rho_h$; the term $ \mathrm{Err}_{num}$ encodes the numerical precision of the scheme;
finally, the term $\mathrm{Err}_{fluct,rel}$ bounds the relative error in the fluctuations.
\item The order of differentiation required for the functions $\f{\varphi}$ should be thought of as the sum of $p+2+\Theta$ (accounts for the requirements of the spatial discretisation, discussed below) and $j-1$ (necessary due to an induction argument over $j$). 
\end{itemize}

If one is only interested in moment bounds (i.e., in a polynomial $\psi$) then the following estimate with no relative error in the fluctuations can be produced.

\begin{theorem}[Estimates on the error for stochastic moments]\label{main2} 

In the same setting of Theorem~\ref{main1},
fix times $\f{T}=(T_1,\dots,T_M)\in[0,T]^M$, a vector $\boldsymbol{j}=(j_1,\dots,j_M)$ with $j:=|\boldsymbol{j}|_1=\sum_{m=1}^{M}{|j_m|}$, and a vector $\boldsymbol{\varphi}=(\varphi_1,\dots,\varphi_M)\in [W^{p+j+1+\Theta,\infty}]^M$. 

Then the difference of moments between $\rho_h$ and the empirical density $\mu^N$ \eqref{EmpiricalMeasure} reads \begin{align}
 & \quad \left|\mean{\prod_{m=1}^{M}{\left[N^{1/2}(\rho_h(T_m)-\mean{\rho_h(T_m)},\mathcal{I}_h\varphi_{m})_h\right]^{j_m}}}\right. \nonumber\\
 & \quad\quad\quad \left.-\mean{\prod_{m=1}^{M}{\left[N^{1/2}\langle \mu^N_{T_m}-\mean{\mu^N_{T_m}},\varphi_{m}\rangle\right]^{j_m}}}\right|\nonumber\\
& \quad \leq \left\{C(d,\rho_{max},\rho_{min})\right\}^{j/2}\left[\prod_{m=1}^{M}{T^{j_m/2}_m}\right]j^{C_1j+C_2}\left[\prod_{m=1}^{M}{\|\varphi_m\|_{W^{j-1+\Theta,\infty}}^{j_m}}\right] {\mean{\sup_{t\in[0,T]}\|\rho_h^-(t)\|_h^2}^{1/2}}\nonumber\\
& \quad \quad + h^{p+1}\left\{C(d,\rho_{max},\rho_{min})\right\}^{j/2}\left[\prod_{m=1}^{M}{\left[T_m\vee \sqrt{T}_m\right]^{j_m/2}}\right]j^{C_3j+C_4}\left[\prod_{m=1}^{M}{\|\varphi_m\|_{W^{p+j+1+\Theta,\infty}}^{j_m}}\right]\nonumber\\
& \quad =: \mathrm{Err}_{neg} + \mathrm{Err}_{num}\label{eq:63c},
\end{align}
with constants $C,C_1,\dots,C_4>0$ independent of $j$, $h$, $N$, $T$, and where we have the bound 
$$
{\mean{\sup_{t\in[0,T]}\|\rho_h^-(t)\|_h^2}^{1/2}} \leq C\mathcal{E}\!\left(N,h\right),
$$
where $\mathcal{E}\!\left(N,h\right)$ has been defined in \eqref{b:8}.
\end{theorem}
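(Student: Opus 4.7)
My plan is to compare the moments of $\rho_h$ and of $\mu^N$ directly via It\^o's formula, proceeding by induction on the total order $j=|\boldsymbol{j}|_1$. Unlike Theorem~\ref{main1}, which passes through a Taylor expansion of a general smooth test function $\psi$ (whose remainder term is what produces the $N^{-j/2}$ factor), polynomial functionals of the centred process yield exact identities for the relevant expectations, and the absence of any Taylor truncation is precisely why no $\mathrm{Err}_{fluct,rel}$ term appears.

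Concretely, set $X_m(t):=N^{1/2}(\rho_h(t)-\mathbb{E}[\rho_h(t)],\mathcal{I}_h\varphi_m)_h$ and $Y_m(t):=N^{1/2}\langle\mu^N_t-\mathbb{E}[\mu^N_t],\varphi_m\rangle$. Applying It\^o's formula to the monomial $\prod_m X_m(t)^{j_m}$ and using the SDE provided by Definition~\ref{def:1} yields, after taking expectation, a linear ODE whose right-hand side splits into a \emph{drift} contribution (linear in $\rho_h-\mathbb{E}[\rho_h]$ tested against $\Delta_h\mathcal{I}_h\varphi_m$ and paired with a monomial in the $X_k$'s of total degree $j-1$) and a \emph{quadratic-variation} contribution (linear in $\rho_h$ tested against $\nabla_h\mathcal{I}_h\varphi_m\cdot\nabla_h\mathcal{I}_h\varphi_{m'}$ and paired with a monomial of total degree $j-2$). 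The analogous identity for $\prod_m Y_m^{j_m}$ has the same structure with $\rho_h,(\cdot,\cdot)_h,\Delta_h,\nabla_h,\mathcal{I}_h$ replaced by $\mu^N,\langle\cdot,\cdot\rangle,\Delta,\nabla,\mathrm{Id}$. The multi-time character of the statement is handled by iterating the recursion on the intervals $[T_{k-1},T_k]$ in turn (with $T_0:=0$) via the Markov property and Duhamel's formula; the time integrations produce the prefactors $\prod_m T_m^{j_m/2}$ and $\prod_m(T_m\vee\sqrt{T_m})^{j_m/2}$, the square-root reflecting worst-case parabolic scaling.

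Subtracting the discrete recursion from the continuous one at each level of the induction isolates exactly three sources of error. First, the differences $\Delta_h\mathcal{I}_h\varphi_m-\Delta\varphi_m$ and $\nabla_h\mathcal{I}_h\varphi_m\cdot\nabla_h\mathcal{I}_h\varphi_{m'}-\nabla\varphi_m\cdot\nabla\varphi_{m'}$ are of size $h^{p+1}$ in appropriate norms by Assumption~\ref{ass:1}, producing the $h^{p+1}$ term together with the regularity requirement $\boldsymbol{\varphi}\in[W^{p+j+1+\Theta,\infty}]^M$. Second, in the quadratic-variation term the discrete scheme must replace $\rho_h$ by a non-negative surrogate (since $\sqrt{\rho_h}$ in the noise is only literally defined when $\rho_h\geq 0$), and Cauchy--Schwarz bounds the resulting remainder by $\mathbb{E}[\sup_{t\leq T}\|\rho_h^-(t)\|_h^2]^{1/2}$; this is the $\mathrm{Err}_{neg}$ term, with the milder regularity $W^{j-1+\Theta,\infty}$ since no spatial consistency error is incurred here. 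Third, the differences of lower-order moments are absorbed by the inductive hypothesis. The a posteriori bound on the $\rho_h^-$ quantity by $\mathcal{E}(N,h)$ is imported from Theorem~\ref{main1}.

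The main technical obstacle is controlling the super-exponential growth $j^{C_ij+C_i'}$ of the constants. Each induction step applies the It\^o product rule to a monomial of degree at most $j$, which produces $O(j^2)$ descendant terms (essentially one per ordered pair of indices singled out by the quadratic-variation operator); iterating the recursion to depth $j$ would naively yield $(j!)^2$-type bounds. To compress this to the claimed $j^{Cj}$ rate one has to exploit the Wick-type combinatorics --- the symmetry under permutation of the ``noise slots'' and the cancellation of $k!$ against the $1/k!$ arising from time-ordered Duhamel iteration --- in the spirit of standard Gaussian chaos estimates. Carrying out this bookkeeping uniformly in $\boldsymbol{j}$, $M$, and $\boldsymbol{T}$, while tracking the cumulative Sobolev norms of $\boldsymbol{\varphi}$, is where the bulk of the technical work lies.
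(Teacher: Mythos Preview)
Your recursion does not close at the order you claim. With the fixed test functions $\mathcal{I}_h\varphi_m$, the It\^o differential of $\prod_m X_m^{j_m}$ carries a genuine drift term
\[
\sum_k j_k\,\Big(\prod_m X_m^{j_m}\Big)X_k^{-1}\cdot \tfrac{1}{2}N^{1/2}\big(\rho_h-\mathbb{E}[\rho_h],\Delta_h\mathcal{I}_h\varphi_k\big)_h,
\]
which is a centred monomial of total degree $j$, not $j-1$. The analogous drift for $\prod_m Y_m^{j_m}$ involves $\langle\mu^N-\mathbb{E}[\mu^N],\Delta\varphi_k\rangle$. Subtracting these two does \emph{not} produce an $O(h^{p+1})$ remainder: the consistency error $\Delta_h\mathcal{I}_h\varphi_k-\Delta\varphi_k$ only controls the difference of the test functions, whereas the underlying random variables $\rho_h-\mathbb{E}[\rho_h]$ and $\mu^N-\mathbb{E}[\mu^N]$ live on different processes and cannot be telescoped. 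What remains is a difference of order-$j$ moments with one test function replaced by its (discrete or continuous) Laplacian. So the ``three sources of error'' you list omit a fourth, same-order contribution, and your induction on $j$ never gets off the ground. Trying to absorb this via Gr\"onwall over a class of test functions would force you to take two more derivatives at each iteration, destroying the stated regularity budget $W^{p+j+1+\Theta,\infty}$.

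The paper eliminates this obstruction by evolving the test functions backward by the (discrete, resp.\ continuous) heat flow: with $\phi_{m,h}^t$ solving \eqref{eq:29} and $\phi_m^t$ solving \eqref{eq:28}, the quantities $\mathcal{S}_N(\mathcal{I}_h\varphi_m,T,t)=(\rho_h(t),\phi_{m,h}^t)_h-(\rho_h(0),\phi_{m,h}^0)_h$ and $\mathcal{T}_N(\varphi_m,T,t)$ are pure martingales (Lemma~\ref{lem:10}), so the only It\^o contribution is the quadratic variation, which \emph{is} linear in $\rho_h^+$ (resp.\ $\mu^N$) and genuinely reduces the order by two. The recursion (Proposition~\ref{thm:2}) then links $\mathcal{D}(\boldsymbol{j},\cdot)$ to $\mathcal{D}(\boldsymbol{j}^{kl},\cdot)$ and $\mathcal{D}(\{\boldsymbol{j}^{kl};1\},\cdot)$ plus residuals, and the $j^{Cj}$ growth is obtained by explicit bookkeeping of how the test-function set mutates along the recursion tree (the bound \eqref{eq:60} in Step~2 of Subsection~\ref{ss:end}), not by appealing to Wick combinatorics. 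Your vague reference to Duhamel is in spirit the right mechanism, but it has to be implemented at the level of the test functions before applying It\^o, not after.
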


\subsection{Structure of the paper}
Section \ref{fdiff} lays out the finite difference discretisation of the Dean--Kawasaki model. Subsection \ref{ss:not} (respectively, Subsection \ref{ss:ass}) lays out the necessary notation (respectively, the relevant assumptions and definitions) related to the model. Subsection \ref{ideas} -- which has an informal flavour -- brings forward some of the main ideas used in the paper. This section lays the ground for Subsection \ref{key1} (respectively, Subsection \ref{ss:rec}), which contains preparatory results for the proofs of Theorem \ref{main1} (respectively, Theorem \ref{main2}). The proof of Theorem \ref{main1} (respectively, Theorem \ref{main2}) is finalised in Subsection \ref{ss:p1} (respectively, Subsection \ref{ss:end}). Technical details are deferred to Subsection \ref{ss:exp} (bounds for all moments of $\rho_h$, and exponentially decaying bound for the negative part $\rho^-_h$), and Appendix \ref{app:a} (deterministic finite difference arguments and relevant It\^o calculus). The statements of results for finite element schemes are given in Appendix \ref{FEM_Appendix}. 
Finally, Section \ref{num} contains numerical simulations associated with Theorem \ref{main1}, using a first-order finite difference discretisation (i.e., $p=1$) in the one-dimensional case $d=1$.

\section{Analysis for finite difference discretisations}\label{fdiff}

\subsection{Notation}\label{ss:not}

\emph{Domain and function spaces}. Let $\mathbb{N}\ni d\leq 3$, and let $\mathbb{T}^d:=[-\pi,\pi)^d$. Let $h:=2\pi/L$, for some $L\in2\mathbb{N}$, be the discretisation parameter of the periodic square grid
$$
G_{h,d}:= h\mathbb{Z}^d \cap \mathbb{T}^d = \{-\pi,-\pi+h,\dots,\pi-h\}^d.
$$
We always work with periodic functions (defined either on $\mathbb{T}^d$ or $G_{h,d}$). From now on, this fact will be implicitly assumed and no longer stated. In particular, we abbreviate $C^{\beta}=C^{\beta}(\mathbb{T}^d)$ and $W^{r,p}=W^{r,p}(\mathbb{T}^d)$.
We use bold characters to denote vector fields.

For $m\in\mathbb{N}$, let $[L^2(G_{h,d})]^m$ be the space of $\mathbb{R}^m$-valued functions defined on $G_{h,d}$. Such space is endowed with the inner product
$$
(\f{u}_h,\f{v}_h)_h:=\sum_{\f{x}\in G_{h,d}}{h^d\f{u}_h(\f{x})\cdot\f{v}_h(\f{x})},\qquad \f{u}_h,\f{v}_h\in [L^2(G_{h,d})]^m,
$$
and admits an orthonormal basis $\{\f{e}^m_{\f{x},\ell}\}_{(\f{x},\ell)\in (G_{h,d},\{1,\dots,m\})}$, whose elements are defined as 
\begin{align*}
\f{e}^m_{\f{x},\ell}(\f{y})=h^{-d/2}\delta_{\f{x},\f{y}}\f{f}_\ell,
\end{align*}
where $\{\f{f}_{\ell}\}_{\ell=1}^{d}$ is the canonical basis of $\mathbb{R}^d$. If $m=1$, the notation is stripped down to 
$$
{e}_{\f{x}}(\f{y}) = h^{-d/2}\delta_{\f{x},\f{y}}.
$$

\emph{Interpolator operator}.
For $\f{\phi}\in [C^0]^m$, we define $\mathcal{I}_{h}\f{\phi}\in [L^2(G_{h,d})]^m$ as the function agreeing with $\f{\phi}$ on $G_{h,d}$. When there is no ambiguity, we simply write $\f{\phi}$ instead of $\mathcal{I}_h\f{\phi}$.

\emph{Discrete differential operators}.
We use the notation $\partial_{h,x_\ell}$ to denote a finite difference operator approximating the partial derivative $\partial_{x_\ell}$. We denote by $\nabla_h:=[\partial_{h,x_{1}},\dots,\partial_{h,x_{d}}]$ the associated finite difference gradient operator. Furthermore, for each $\ell$, we define the discrete second partial derivative $D^2_{h,x_{\ell}}$ as the operator for which the standard integration by parts formula 
\begin{align}\label{e:1}
(D^2_{h,x_{\ell}}\rho_h,v_h)_h = -(D_{h,x_{\ell}}\rho_h,D_{h,x_{\ell}}v_h)_h
\end{align}
holds, where $D_{h,x_{\ell}}$ is some (possibly different) finite difference operator approximating the partial derivative $\partial_{x_\ell}$. We abbreviate $\nabla_{D,h}:=[D_{h,x_{1}},\dots,D_{h,x_{d}}]$. 
As a result of \eqref{e:1}, the discrete operators $D^2_{h,x_\ell}$ are symmetric (in the sense of finite difference operators). We abbreviate 
\begin{align*}
\Delta_h:=\sum_{\ell=1}^{d}{D^2_{h,x_{\ell}}}
\end{align*} to indicate the discrete Laplace operator. Specific details on $\nabla_h$ and $\Delta_h$ will be provided in the following subsection.
\begin{rem}
The operators $\nabla_{h}$ and $\nabla_{D,h}$ (both providing an approximation of the continuous gradient $\nabla$) may be different, and have different uses in our discretised Dean--Kawasaki model (Definition \ref{def:1} below). The operator $\nabla_{h}$ is deployed in the noise, while the operator $\nabla_{D,h}$ in the integration by parts formula \eqref{e:1}.
\end{rem}
For reasons which will become clear in Subsection \ref{ideas} (see \emph{Block 3} therein), we set the notation for suitable continuous and discrete backwards heat flows. Specifically, for a sufficiently regular function $\varphi$ and a final time $T$, we denote by $\phi^t$ the solution the continuous backwards heat equation
\begin{align}\label{eq:28}
\partial_t\phi^t=-\frac{1}{2}\Delta\phi^t\quad\mbox{on }\mathbb{T}^d\times (0,T),
\end{align}
with final datum $\phi^T=\varphi$. Analogously, we denote by $\phi_h^t$ the solution to the discrete backwards heat equation
\begin{align}\label{eq:29}
\partial_t\phi_{h}^t=-\frac{1}{2}\Delta_h\phi_{h}^t\quad\mbox{on }G_{h,d}\times (0,T),
\end{align}
with final datum $\phi_h^T=\mathcal{I}_h\varphi$.
In the following, we also use the alternative notation $\mathcal{P}^z(\varphi):=\phi^{T-z}$ (respectively, $\mathcal{P}_h^z(\mathcal{I}_h\varphi):=\phi_{h}^{T-z}$), to stress that $\mathcal{P}^z(\varphi)$ (respectively, $\mathcal{P}_h^z(\mathcal{I}_h\varphi)$) is the result of evolving a backwards heat equation (respectively, a discrete backwards heat equation) starting from $\varphi$ (respectively, from $\mathcal{I}_h\varphi$) for a timespan $z$. 

For $y\in\mathbb{R}$, we define $y^{+}:=\max\{y;0\}$ and $y^{-}:=-\min\{y;0\}$. In addition, as usual, we use the letter $C$ to denote a generic constant, whose value may change from line to line in the computations.

\subsection{Assumptions and discretised Dean--Kawasaki model}\label{ss:ass}

\begin{customthm}{FD1}[Discrete differential operators]\label{ass:1}
Let $p\in\mathbb{N}$ be fixed. We make the following assumptions on the discrete operators $\partial_{h,x_\ell}$  and $D^2_{h,x_\ell}$:
\begin{itemize}[leftmargin=0.9 cm] 
\item the discrete operators $\partial_{h,x_\ell}$ and $D^2_{h,x_\ell}$ are finite difference operators of order $p+1$. Explicitly, this means that
\begin{align}\label{e:2}
\left|\partial_{h,x_\ell}\mathcal{I}_{h}{\phi}(\f{x})-\partial_{x_\ell}\phi(\f{x})\right| & \leq C\|\phi\|_{C^{p+1}}h^{p+1},\qquad \f{x}\in G_{h,d},\qquad \ell\in\{1,\dots,d\},\\
\left|D^2_{h,x_\ell}\mathcal{I}_{h}{\phi}(\f{x})-D^2_{x_\ell}\phi(\f{x})\right| & \leq C\|\phi\|_{C^{p+2}}h^{p+1},\qquad \f{x}\in G_{h,d},\qquad \ell\in\{1,\dots,d\},
\end{align}
for any $\phi\in C^{p+2}(\mathbb{T}^d)$;
\item The operators $\partial_{h,x_\ell}$ and $D^2_{h,x_\ell}$ commute.
\end{itemize}
\end{customthm}
\begin{customthm}{FD2}[Brownian particle system and initial datum of Dean--Kawasaki dynamics]\label{ass:2}
Let $p$ be as in Assumption \ref{ass:1}. We assume to have $N\in\mathbb{N}$ independent $d$-dimensional Brownian motions $\{\f{w}_k\}_{k=1}^{N}$ moving in $\mathbb{T}^d$. Moreover: 
\begin{itemize}[leftmargin=0.9 cm]
\item\label{a}
the initial positions $\{\f{w}_k(0)\}_{k=1}^{N}$ are deterministic; 
\item\label{c} there exists a deterministic function $\rho_{0,h}\in L^2(G_{h,d})$ (which will serve as the initial datum of the discretised Dean--Kawasaki dynamics in  Definition \ref{def:1} below), satisfying the following properties: 
\begin{itemize}[leftmargin=0.9 cm]
\item there exist $h$-independent constants $ \rho_{min}$ and $\rho_{max}$ such that
\begin{align*}
0< \rho_{min}\leq \rho_{0,h} \leq \rho_{max};
\end{align*}
\item the empirical density of the initial configuration $\mu^N_0:=N^{-1}\sum_{k=1}^{N}{\delta_{\f{w}_k(0)}}$ approximates $\rho_{0,h}$ with accuracy $p+1$, in the sense that the inequality
\begin{align}\label{eq:401a}
& \left|\langle\mu^N_0,\eta\rangle - (\rho_{0,h},\mathcal{I}_h\eta)_h\right| \nonumber\\
& \quad = \left|N^{-1}\sum_{k=1}^{N}{\eta(\f{w}_k(0))} - (\rho_{0,h},\mathcal{I}_h\eta)_h\right| \leq Ch^{p+1}\|\eta\|_{C^{p+1}},
\end{align}
holds for each function $\eta\in C^{p+1}$.
\end{itemize}
\end{itemize}
\end{customthm}
\begin{customthm}{FD3}[Scaling of relevant parameters]\label{ass:3}
We assume the scaling
\begin{align}\label{eq:202}
h\geq C(d,\rho_{min},\rho_{max})N^{-1/d} |\log N|^{2/d}(1+T),
\end{align}
for some $T>0$, and where $\rho_{min}$ and $\rho_{max}$ have been introduced in Assumption \ref{ass:2}. This scaling will be needed to produce an exponentially decaying estimate associated with $\rho_h^{-}$, see \eqref{r:3-a} below.
\end{customthm}
\begin{customthm}{FD4}[Mean-field limit]\label{ass:4} The solution to the discrete heat equation 
\begin{equation}
 \label{e:8}
  \left\{\quad
    \begin{aligned}
    \!\!\!\!\! \partial_t \overline{\rho}_h&= \frac{1}{2}\Delta_h\overline{\rho}_h, \\
    \!\!\!\!\! \overline{\rho}_h(0)&= \rho_{0,h},
    \end{aligned}
  \right.
\end{equation}
is such that $\rho_{min}\leq \overline{\rho}_h \leq \rho_{max}$ (where $\rho_{min}$ and $\rho_{max}$ have been introduced in Assumption \ref{ass:2}) for all times up to $T$ (where $T$ has have been introduced in Assumption \ref{ass:3}).
\end{customthm}

We can now state the precise definition of our finite difference Dean--Kawasaki model.

\begin{customthm2}{FD-DK}[Finite difference Dean--Kawasaki model of order $p+1$]\label{def:1}  Assume the validity of Assumptions \ref{ass:1}--\ref{ass:4}. We say that the $L^2(G_{h,d})$-valued process $\rho_h$ solves a finite difference Dean--Kawasaki model of order $p+1$ if it solves the system of stochastic differential equations
\begin{equation}
  \left\{
    \begin{aligned}\label{e:3}
     \m \left(\rho_h,e_{\f{x}}\right)_h & = \frac{1}{2}\left(\Delta_h \rho_h,e_{\f{x}}\right)_h\m t - N^{-1/2}\!\!\!\!\!\!\!\!\!\sum_{(\f{y},\ell)\in(G_{h,d},\{1,\dots,d\})}{\!\!\!\!\left(\mathcal{F}_\rho\f{e}^d_{h,\f{y},\ell},\nabla_h e_{\f{x}}\right)_h\m\beta_{(\f{y},\ell)}},\quad \forall e_{\f{x}},  \\
      \rho_h(0) & = \rho_{0,h},
    \end{aligned}
    \right.
\end{equation}
where $\{\beta_{(\f{y},\ell)}\}_{(\f{y},\ell)\in(G_{h,d},\{1,\dots,d\})}$ are standard independent Brownian motions, and where $\mathcal{F}_\rho\in L^2(G_{h,d})$ is defined as 
\begin{align}\label{e:13}
\mathcal{F}_\rho(\f{x}):=\sqrt{\rho_h^{+}(\f{x})},\qquad \forall \f{x}\in G_{h,d}.
\end{align}
\end{customthm2}

\begin{rem}
If \eqref{e:8} admits a discrete maximum principle, then Assumption \ref{ass:4} is satisfied for any $T>0$ and any non-negative datum $\rho_{0,h}$. For example, the discrete maximum principle applies for the second-order symmetrical discrete Laplace operator 
\begin{align}\label{e:915}
\Delta_{h}f(\f{x}):=\frac{-2d\hspace{0.005pc}f_h(\f{x})+\sum_{\f{y}\sim\f{x}}{f_h(\f{y})}}{h^2},
\end{align}
where $\f{y}\sim\f{x}$ indicates that $\f{y}$ and $\f{x}$ are adjacent grid points.


\end{rem}

\begin{rem}

One may also omit the contribution $(1+T)$ in the scaling \eqref{eq:202}, at the expense of obtaining results with a worse dependency on the final time $T$. We are not interested in optimising time dependencies in this work, and we simply include the term $1+T$ in order to get cleaner final results.

\end{rem}

\subsection{Key ideas behind the proofs of the main results}\label{ideas}

The proofs of Theorems \ref{main1} and \ref{main2} are of inductive type. In order to simplify their exposition, it is useful to first list a skeleton of the main building blocks.\\

\emph{Block 1. Discrete Dean--Kawasaki model: cross-variation analysis}.
At their core, both proofs use basic It\^o calculus to describe the time evolution of suitable nonlinear functionals $\psi$ of the quantities
\begin{align}\label{b:1}
(\rho_h,\phi_h)_h,\qquad (\mu^N,\phi),
\end{align}
and of their expected values, where $\phi_h$ and $\phi$ are suitable test functions.
The quantities in \eqref{b:1} are linear functionals of $\rho_h$ and $\mu^N$, respectively. What is crucial, is that the cross-variation of the processes \eqref{b:1} are -- up to a small error -- also linear functionals of $\rho_h$ and $\mu^N$. The argument for $\mu^N$ is straightforward, and we can thus defer it to the proofs themselves. As for $\rho_h$, we use Definition \ref{def:1} to write
\begin{align}\label{b:2}
\m (\rho_h, \phi_{i,h})_h & = \frac{1}{2}(\Delta_h\rho_{h},\phi_{i,h})_h\m t- N^{-1/2}\!\!\!\!\!\!\!\!\sum_{(\f{y},\ell)\in(G_{h,d},\{1,\dots,d\})}{\!\!\!\left(\mathcal{F}_\rho\f{e}^d_{h,\f{y},\ell},\nabla_h \phi_{i,h}\right)_h\m\beta_{(\f{y},\ell)}}
\end{align}
for two different test functions $\phi_{i,h}$, $i\in\{1,2\}$. Using the It\^o formula and the Parseval identity in $[L^2(G_{h,d})]^d$, one finds that the cross-variation of the stochastic noise of \eqref{b:2} is 
\begin{align}
& \left\langle\sum_{(\f{y},\ell)\in(G_{h,d},\{1,\dots,d\})}{\!\!\!\!\!\left(\mathcal{F}_\rho\f{e}^d_{h,\f{y},\ell},\nabla_h \phi_{1,h}\right)_h\dot \beta_{(\f{y},\ell)}},\,
\sum_{(\f{y},\ell)\in(G_{h,d},\{1,\dots,d\})}{\!\!\!\!\!\left(\mathcal{F}_\rho\f{e}^d_{h,\f{y},\ell},\nabla_h \phi_{2,h}\right)_h\dot \beta_{(\f{y},\ell)}}\right\rangle\nonumber\\
& \quad = \sum_{(\f{y},\ell)\in(G_{h,d},\{1,\dots,d\})}{\left(\f{e}^d_{h,\f{y},\ell},\mathcal{F}_\rho\nabla_h \phi_{1,h}\right)_h\left(\f{e}^d_{h,\f{y},\ell},\mathcal{F}_\rho\nabla_h \phi_{2,h}\right)_h}\label{e:7b}\\
& \quad = \left(\mathcal{F}^2_\rho,\nabla_h \phi_{1,h}\cdot \nabla_h \phi_{2,h}\right)_h \label{e:7c}\\
& \quad \stackrel{\mathclap{\eqref{e:13}}}{=}\, \left(\rho^{+}_h,\nabla_h \phi_{1,h}\cdot \nabla_h \phi_{2,h}\right)_h\label{e:7}\\
& \quad = \left(\rho_h,\nabla_h \phi_{1,h}\cdot \nabla_h \phi_{2,h}\right)_h + \left(\rho^{-}_h,\nabla_h \phi_{1,h}\cdot \nabla_h \phi_{2,h}\right)_h.\label{e:7d}
\end{align}
The first term in \eqref{e:7d} is indeed a linear functional of $\rho_h$. The second term (which we will show to be negligible for suitable scaling regimes, see Subsection \ref{ss:exp}) takes into account the \emph{a priori} lack of knowledge concerning the non-negativity of solutions to the discrete Dean--Kawasaki model \eqref{e:3}. We also stress that the validity of the computations above is independent of the order of the finite difference scheme (i.e., $p$).

Expression \eqref{e:7} crucially preserves the cross-variation structure associated with the continuous Dean--Kawasaki \eqref{DeanKawasaki} for nonnegative densities. More precisely, \emph{formally} testing \eqref{DeanKawasaki} with a smooth test functions $\phi_i$, $i\in\{1,2\}$, gives
\begin{align}\label{e:5}
\int_{\mathbb{T}^d}{\partial_t \rho\,\phi_i}\m \f{x}& = \frac{1}{2}\int_{\mathbb{T}^d}{\Delta \rho\,\phi_i \m \f{x}}-N^{-1/2}\int_{\mathbb{T}^d}{\sqrt{u}\,\f{\xi}\cdot\nabla\phi_i \m \f{x}}\nonumber\\
& = \frac{1}{2}\int_{\mathbb{T}^d}{\Delta \rho\,\phi_i \m \f{x}} -N^{-1/2}\sum_{{\f{s}}\in\mathbb{Z}}{\int_{\mathbb{T}^d}{\sqrt{\rho}\,\f{e}_{\f{s}}\cdot\nabla\phi_i \m \f{x}\dot\beta_{\f{s}}}},
\end{align}
where the last inequality if justified by the representation $\f{\xi}=\sum_{\f{s}\in\mathbb{Z}^d}\f{e}_{\f{s}}\dot\beta_{\f{s}}$, where $\{\f{e}_{\f{s}}\}_{{\f{s}}\in\mathbb{Z}^d}$ is an orthonormal basis of $[L^2(\mathbb{T}^d)]^d$ and $\{\beta_{\f{s}}\}_{\f{s}\in\mathbb{Z}^d}$ are independent Brownian motions. The noise cross-variation is then obtained using the It\^o formula and the Parseval idendity -- this time in $[L^2(\mathbb{T}^d)]^d$ -- to obtain
\begin{align}\label{e:6}
& \left\langle\sum_{\f{k}\in\mathbb{Z}^d}{\int_{\mathbb{T}^d}{\sqrt{\rho}\f{e}_{\f{k}}\cdot\nabla\phi_1\m \f{x}\dot\beta_{\f{k}}}},\sum_{{\f{l}}\in\mathbb{Z}^d}{\int_{\mathbb{T}^d}{\sqrt{\rho}\f{e}_{\f{l}}\cdot\nabla\phi_2 \m \f{x}\dot\beta_{\f{l}}}}\right\rangle\nonumber\\
& \quad = \sum_{{\f{k}}\in\mathbb{Z}}{\int_{\mathbb{T}^d}{\sqrt{\rho}\f{e}_k\cdot\nabla\phi_1}\m \f{x}}\int_{\mathbb{T}^d}{\sqrt{\rho}\f{e}_k\cdot\nabla\phi_2\m \f{x}} = \int_{\mathbb{T}^d}{\rho \nabla\phi_1\cdot\nabla\phi_2\m \f{x}},
\end{align}
and thus the cross-variations \eqref{e:6} and \eqref{e:7} are (modulo positive part $\rho_h^{+}$) structurally identical.\\

\emph{Block 2. Numerical error}. There are two contributions to the numerical error, namely:
\begin{enumerate}
\item[-] the difference of initial data $\mu^N_0$ and $\rho_h(0)$, and
\item[-] the difference in the evolution of test functions (say, $\phi$ and $\phi_h$),
\end{enumerate}
and both are proportional to $h^{p+1}$. While the first contribution has the correct bound by Assumption \ref{ass:2}, the second contribution needs to be estimated: The main difficulty is that the interpolation of the test function arising from the cross-variation of the second quantity in \eqref{b:1} (i.e., $\mathcal{I}_h(\nabla\phi_1\cdot\nabla\phi_2)$) does not coincide -- in general -- with $\nabla_{h}\phi_{1,h}\cdot\nabla_h\phi_{2,h}$ (i.e., the cross-variation of the first quantity in \eqref{b:1}). We therefore need to show the bound
$$
\left|\mathcal{I}_h(\nabla\phi_1\cdot\nabla\phi_2)-\nabla_{h}\phi_{1,h}\cdot\nabla_h\phi_{2,h}\right| \lesssim h^{p+1}
$$
in order not to lose $h$-regularity in consecutive steps of our inductive proofs (more details in Block 5 below).
The necessary tools for this task are contained in Subsection \ref{ss:approx}. \\

\emph{Block 3. Deterministic dynamics of the test functions}. As we are interested only in the analysis of the fluctuations for the Dean--Kawasaki model, it is convenient to choose the deterministic functions $\psi$, $\phi$, $\phi_h$ in such a way that as many drift terms as possible in relevant It\^o differentials vanish. This is the reason behind the choice of the backwards heat equation \eqref{eq:28} (respectively, \eqref{eq:29}) for $\phi$ (respectively, for $\phi_h$), which directly compensates the diffusive nature of the particle system (respectively, of the Dean--Kawasaki model). In practice, this is reflected in the useful equalities (which follow from Lemma \ref{lem:10})
\begin{align}\label{b:6a}
(\rho_h(t),\phi_h^t)_h-(\rho_h(0),\phi_h^0)_h& = (\rho_h(t)-\mean{\rho_h(t)},\phi_h^t)_h,\\
\langle \mu^N_t, \phi^t \rangle - \langle \mu^N_0, \phi^0 \rangle & = \langle \mu^N_t-\mean{\mu^N_t}, \phi^t \rangle,\label{b:6b}
\end{align}
for $\phi, \phi_h$ as in \eqref{eq:28}, \eqref{eq:29}. 
The discussion for $\psi$ in the case of Theorem \ref{main1} is conceptually analogous, but technically more involved, and is devolved to the proof itself. As for Theorem \ref{main2}, $\psi$ is chosen to be static, therefore this discussion does not apply.

We expand these considerations in Appendix \ref{ss:mom}.\\

\emph{Block 4. Stretched exponential bounds for centred moments of the particle system and the Dean--Kawasaki solution}. This block associates the scaling regime of Assumption \ref{ass:3} to the validity of the moment bounds 
\begin{align*}
\max_{t\in[0,T]}{\mean{\left|\prod_{m=1}^{M}{\langle \mu^{N}_{T_m}-\mean{\mu^{N}_{T_m}},\varphi_{m}\rangle^{j_m}}\right|}} & \leq  \left\{N^{-1} T \right\}^{j/2}j^{j}\left[\prod_{m=1}^{M}{\|\nabla\varphi_m\|_{\infty}^{j_m}}\right]
\end{align*}
and
\begin{align*}
&\max_{t\in[0,T]}{\mean{\left|\prod_{m=1}^{M}{\left(\rho_h(T_m)-\mean{\rho_h(T_m)},\mathcal{I}_h\varphi_{m}\right)_h^{j_m}}\right|}} \\
&\quad \leq \left\{2N^{-1} TC\left(d,\rho_{min},\rho_{max}\right)\right\}^{j/2}j^{3j}\left(\prod_{m=1}^{M}{\|\varphi_{m}\|_{C^{1+\Theta}}^{j_m}}\right),
\end{align*}
where $T_1,\dots,T_m\in [0,T]$, and $\Theta$ was introduced in \eqref{b:7}. The difference in the norms of the test functions stems from a difference in underlying mathematical arguments (depending on the circumstance, we will either use the maximum principle or the Sobolev embedding Theorem).
The necessary tools for this point are contained in Subsection \ref{ss:mom}.\\

\emph{Block 5. Inductive argument}.
Block 1 essentially states that computing cross-variations of discrete Dean--Kawasaki models yields linear functionals \eqref{b:1}, as well as negligible corrections related to the negative part $\rho^{-}_h$. Taking Block 2 also into account, this leads to the following crucial observation.

\emph{The It\^o correction term in the It\^o differential of smooth enough nonlinear functions $\psi$ applied to \eqref{b:1} and their expected values is a sum of:
\begin{enumerate}
\item[-] negligible terms featuring $\rho^{-}_h$ and the numerical error, and
\item[-] yet another (possibly different) nonlinear function $\tilde{\psi}$ applied to \eqref{b:1} and their expected values.
\end{enumerate}
}
This property allows to set up both proofs using an induction argument whose inductive step is the change in nonlinear function (from $\psi$ to $\tilde{\psi}$): the residual terms (featuring $\rho^{-}_h$ and the numerical error) are estimated at each step, and are not fed to the next step.\\

\subsection{The key step for the accuracy estimate for fluctuations in Theorem \ref{main1}}\label{key1}

For use in the next proposition, we define the two function spaces $\mathcal{L}_\text{pow,r}^{q},{\tilde{\mathcal{L}}}_\text{pow,r}^{q}$ as
\begin{align*}
\mathcal{L}_\text{pow,r}^{q} := \left\{ \psi:\mathbb{R}^M\rightarrow \mathbb{R}\colon \|\psi\|_{\mathcal{L}_\text{pow,r}^{q}}
:=\max_{0\leq \tilde q\leq q} \left\|(1+|\cdot|^2)^{-r/2} D^{\tilde q}\psi(\cdot)\right\|_{L^\infty}
<\infty\right\}, \\
{\tilde{\mathcal{L}}}_\text{pow,r}^{q} := \left\{ \psi:\mathbb{R}^M\rightarrow \mathbb{R}\colon \|\psi\|_{\mathcal{L}_\text{pow,r}^{q}}
:=\max_{1\leq \tilde q\leq q} \left\|(1+|\cdot|^2)^{-r/2} D^{\tilde q}\psi(\cdot)\right\|_{L^\infty}
<\infty\right\}.
\end{align*}

Furthermore, we emphasise that we use the shorthand notations
\begin{align*}
\langle \mu_{\f{T}}^N-\mathbb{E}[\mu_{\f{T}}^N], \f{\phi} \rangle 
& := \begin{pmatrix}
\langle \mu_{T_1}^N-\mathbb{E}[\mu_{T_1}^N], \phi_1 \rangle 
\\
\vdots
\\
\langle \mu_{T_M}^N-\mathbb{E}[\mu_{T_M}^N], \phi_M \rangle 
\end{pmatrix},\\
\big((\rho_h(\f{T})-\mathbb{E}[\rho_h(\f{T})]),\f{\phi}_h\big)_h
& := 
\begin{pmatrix}
\big((\rho_h(T_1)-\mathbb{E}[\rho_h(T_1)]),\phi_{1,h}\big)_h
\\
\vdots
\\
\big((\rho_h(T_M)-\mathbb{E}[\rho_h(T_M)]),\phi_{M,h}\big)_h
\end{pmatrix}, \\
\big((\rho_h-\mathbb{E}[\rho_h]),\f{\phi}_h\big)_h(\f{T})
& := 
\big((\rho_h(\f{T})-\mathbb{E}[\rho_h(\f{T})]),\f{\phi}_h^{\f{T}}\big)_h,\\
t \wedge \f{T}
& := (t\wedge T_1,\dots,t\wedge T_M),
\end{align*}
i.e., we implicitly multiply vectors in an element-wise fashion respectively evaluate vectorial functions by a vector of (time) parameters in an element-wise way. 

Theorem~\ref{main1} will be seen to be an easy consequence of the following crucial proposition and an inductive argument.

\begin{proposition}\label{prop:recursive}
Let $\mu_t^N$ denote the empirical measure of $N$ independent Brownian particles as defined in \eqref{EmpiricalMeasure}.

Let $\rho_h$ be a solution to the Dean--Kawasaki equation discretised using finite differences on a uniform grid \eqref{e:3}. Suppose furthermore that Assumption \ref{ass:1} (details of operators $\Delta_h$ and $\nabla_h$), Assumption \ref{ass:2} (initial condition on Brownian particle system), Assumption \ref{ass:3} (scaling assumptions), and Assumption \ref{ass:4} (positivity-preserving properties of mean-field limit) hold.

Let $M$, $p\in \mathbb{N}$, $q\in \mathbb{N}$, and $r\in\mathbb{N}_0$. Let $\psi:\mathbb{R}^M\rightarrow \mathbb{R}$ satisfy $\psi\in \mathcal{L}_\text{pow,r}^{q+2}$. Let $\f{\varphi} \in [W^{2+p+\Theta,\infty}]^M$. Finally, let $\f{T}=(T_1,\dots,T_M)$ such that $0<T_1 \leq \ldots \leq T_M\leq T$.

Then there exist test functions $\tilde \psi^t_{kl}$, $\tilde{\f{\phi}}^t_{kl}$, $\psi^0$, and $\f{\phi}^0$ as well as $\f{\tilde T}_{kl}\in \mathbb{R}^{M+1}$ such that
\begin{subequations}
\begin{align}
\label{InductiveEquationDistributionEmpiricalMeasure}
&
\mathbb{E} \Bigg[\psi \bigg(N^{1/2} \left\langle \mu_{\f{T}}^N-\mathbb{E}[\mu_{\f{T}}^N], \f{\varphi} \right\rangle\bigg)\Bigg]
\\&
\nonumber
=
\psi^0(0)
+\frac{1}{2N^{1/2}} \sum_{k,l=1}^M \int_0^{T_k\wedge T_l} \mathbb{E} \Bigg[\tilde \psi^t_{kl} \bigg(N^{1/2} \left\langle  \mu_{t\wedge \f{\tilde T}}^N-\mathbb{E}[\mu_{t\wedge \f{\tilde T}}^N],\tilde{\f{\phi}}^t_{kl} \right\rangle \bigg) \Bigg] \,\emph{\m} t
\end{align}
and
\begin{align}
\nonumber
&
\mathbb{E} \Bigg[\psi \bigg(N^{1/2}\big((\rho_h(\f{T})-\mathbb{E}[\rho_h(\f{T})]),\mathcal{I}_h\f{\varphi}\big)_h\bigg)\Bigg]
\\&\nonumber
= \mathbb{E} \Bigg[\psi \bigg(N^{1/2}\big((\rho_h-\mathbb{E}[\rho_h]),\f{\phi}_h\big)_h(\f{T}) \bigg)\Bigg]
\\&
\label{InductiveEquationDistributionDK}
=
\psi^0(0)
+\frac{1}{2N^{1/2}}  \sum_{k,l=1}^M \int_0^{T_k\wedge T_l} \mathbb{E} \Bigg[\tilde \psi^t_{kl} \bigg(N^{1/2}\big( \rho_h-\mathbb{E}[\rho_h], \mathcal{I}_h\tilde{\f{\phi}}_{kl}\big)_h (t\wedge \f{\tilde T}_{kl})\bigg) \Bigg] \,\emph{\m} t
\\&~~~~~
\nonumber
+\mathrm{Err}_{num}
+\mathrm{Err}_{neg}
\end{align}
\end{subequations}
hold.
Here, $\tilde{\f{\phi}}^t_{kl}$ is subject to the estimate
\begin{subequations}
\begin{align}
\label{EstimateTildePhi}
\|\tilde{\f{\phi}}^t_{kl}\|_{W^{q-1,\infty}}
&\leq C(q,M,\|\f{\varphi}\|_{W^{q,\infty}})
\quad\text{for all }t\leq T,
\end{align}
while, if $q\geq 2$, $\tilde \psi^t$ is subject to the estimate
\begin{align}
\label{EstimateTildePsiFirst}
\|\tilde \psi^t\|_{\mathcal{L}_\text{pow,r+1}^{q-2}}
&\leq C(q,r,M,\|\f{\varphi}\|_{W^{1,\infty}}^2, T) \|\psi\|_{{\tilde{\mathcal{L}}}_\text{pow,r}^{q}}
\quad\text{for all }t\leq T.
\end{align}
Furthermore, $\mathrm{Err}_{num}$ and $\mathrm{Err}_{neg}$ are subject to the estimate 
\begin{align}
\label{EstimateErrNum}
|\mathrm{Err}_{num}| &\leq C(M,\rho_{max},r,\|\f{\varphi}\|_{C^{p+2+\Theta}},T) \big(\|\psi\|_{{\tilde{\mathcal{L}}}^{2}_{pow,r}} + N^{-1/2} \|D\psi\|_{{\tilde{\mathcal{L}}}^{2}_{pow,r}}\big) 
 h^{p+1},
\\
\label{EstimateErrNeg}
|\mathrm{Err}_{neg}| &\leq C(M,\rho_{max},r,\|\f{\varphi}\|_{C^{1+\Theta}},T) \|\psi\|_{{\tilde{\mathcal{L}}}^{2}_{pow,r}} \mathcal{E}\!\left(N,h\right),
\end{align}
where $\mathcal{E}\!\left(N,h\right)$ is defined in \eqref{b:8}.

Under the additional assumption that $\|\varphi_k\|_{L^2}=1$ and $\int_\domain \varphi_k \,\emph{d}\f{x}=0$ for all $k$, that $\int_\domain \varphi_k \varphi_l \,\emph{d}\f{x}=0$ whenever $T_k=T_l$, and that 
$$
\overline{m}_{(1/2)T_1} := \inf_{x\in \domain,t>\frac{1}{2}T_1} \mathbb{E}[\mu^N_t](x) \geq  \rho_{min}>0,
$$
we have the additional bounds
\begin{align}
\label{EstimateTildePsiSecond}
\|\tilde \psi^t_{kl}\|_{\mathcal{L}_\text{pow,r+1}^{q-1}}
&\leq
\frac{C(q,r,M,\|\f{\varphi}\|_{W^{1,\infty}}^2, T)}{\sqrt{  \rho_{min} \min\big\{\min_{m:T_m\geq t}(T_m-t)~,~\min_{k,l:T_k\neq T_l}|T_k-T_l|\big\} }} \|\psi\|_{{\tilde{\mathcal{L}}}_\text{pow,r}^{q}}
\end{align}
and
\begin{align}
|\mathrm{Err}_{num}|
&\leq C(r,\rho_{max},\rho_{min},d,M,\|\f{\varphi}\|_{C^{p+2+\Theta}}) \big(\|\psi\|_{{\tilde{\mathcal{L}}}^{1}_{pow,p}} + N^{-1/2} \|D\psi\|_{{\tilde{\mathcal{L}}}^{1}_{pow,p}}\big) \nonumber\\
\label{EstimateErrNumSecond}
&\quad\quad\quad \times\frac{1}{\sqrt{  \rho_{min} \min_{k,l:T_k\neq T_l}|T_k-T_l| }}
h^{p+1},
\end{align}
as well as
\begin{align}
|\mathrm{Err}_{neg}| &\leq
C(M,\rho_{max},\rho_{min},d,r,\|\f{\varphi}\|_{C^{1+\Theta}},T)
\|\psi\|_{{\tilde{\mathcal{L}}}^{1}_{pow,r}} \mathcal{E}\!\left(N,h\right)
\nonumber
\\&
\label{EstimateErrNegSecond}
\quad\quad\quad\times\frac{1}{\sqrt{  \rho_{min} \min_{k,l:T_k\neq T_l}|T_k-T_l| }}.
\end{align}
\end{subequations}
\end{proposition}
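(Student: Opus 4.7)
The plan is to follow the strategy of Blocks~1 and 3 of Subsection~\ref{ideas}, deriving both identities from a single application of It\^o's formula to an auxiliary time-dependent modification $\psi^t$ of $\psi$, chosen so as to absorb the drift and the deterministic component of the quadratic variation.

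Let $\phi^t_k$ solve the backward heat equation \eqref{eq:28} on $[0,T_k]$ with terminal datum $\phi^{T_k}_k=\varphi_k$, extended by $\phi^t_k:=\varphi_k$ for $t>T_k$; let $\phi^t_{k,h}$ be the analogous discrete flow defined via \eqref{eq:29} with terminal datum $\mathcal{I}_h\varphi_k$. By the cancellation identities \eqref{b:6a}--\eqref{b:6b}, the processes
\begin{align*}
X^k_t := N^{1/2}\big\langle \mu^N_{t\wedge T_k}-\mathbb{E}[\mu^N_{t\wedge T_k}],\,\phi^t_k\big\rangle,\ \ X^k_{h,t} := N^{1/2}\big(\rho_h(t\wedge T_k)-\mathbb{E}[\rho_h(t\wedge T_k)],\,\phi^t_{k,h}\big)_h
\end{align*}
are martingales starting from zero, with cross-variations furnished by Block~1 of Subsection~\ref{ideas}: \eqref{e:6} in the continuous case and the splitting \eqref{e:7d} of $(\rho_h^+,\nabla_h\phi^t_{k,h}\cdot\nabla_h\phi^t_{l,h})_h$ into $(\rho_h,\cdot)_h$- and $(\rho_h^-,\cdot)_h$-pieces in the discrete case. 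Writing $\alpha^t_{kl} := \langle\mathbb{E}[\mu^N_t],\nabla\phi^t_k\cdot\nabla\phi^t_l\rangle$, I define $\psi^t$ as the solution of the backward parabolic PDE
\begin{align*}
\partial_t\psi^t(z) + \tfrac{1}{2}\sum_{k,l=1}^M \mathbf{1}_{t\leq T_k\wedge T_l}\,\alpha^t_{kl}\,\partial_{z_k z_l}\psi^t(z) = 0,\qquad \psi^T=\psi.
\end{align*}
Applying It\^o's formula to $\psi^t(X_t)$ and taking expectation, the drift $\partial_t\psi^t$ and the deterministic component of the It\^o correction cancel, leaving only $\tfrac{1}{2}\sum_{k,l}\int_0^{T_k\wedge T_l}\mathbb{E}[(\partial_{z_k z_l}\psi^t)(X_t)\langle\mu^N_t-\mathbb{E}[\mu^N_t],\nabla\phi^t_k\cdot\nabla\phi^t_l\rangle]\,\mathrm{d}t$. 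Multiplying and dividing by $N^{1/2}$ inside the expectation matches \eqref{InductiveEquationDistributionEmpiricalMeasure} with $\tilde\psi^t_{kl}(z_1,\ldots,z_{M+1}) := (\partial_{z_k z_l}\psi^t)(z_1,\ldots,z_M)\cdot z_{M+1}$, $\tilde{\f{\phi}}^t_{kl} := (\phi^t_1,\ldots,\phi^t_M,\nabla\phi^t_k\cdot\nabla\phi^t_l)$, and $\psi^0(0)$ equal to $\psi^t|_{t=0}$ evaluated at $0$ (since $X_0=0$).

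The Dean--Kawasaki identity \eqref{InductiveEquationDistributionDK} follows by the same construction, using $X^k_{h,t}$, $\phi^t_{k,h}$, and the discrete analogue of $\alpha^t_{kl}$ in place of their continuous counterparts. The $(\rho_h^-,\cdot)_h$ piece in \eqref{e:7d} does not participate in the cancellation; I control it via the stretched-exponential bound of Subsection~\ref{ss:exp} combined with the moment bounds of Block~4 of Subsection~\ref{ideas}, producing $\mathrm{Err}_{neg}$ as in \eqref{EstimateErrNeg}. To cast the resulting test functions in the form $\mathcal{I}_h\tilde{\f{\phi}}^t_{kl}$ required by \eqref{InductiveEquationDistributionDK}, I replace $\phi^t_{k,h}\mapsto\mathcal{I}_h\phi^t_k$ and $\nabla_h\phi^t_{k,h}\cdot\nabla_h\phi^t_{l,h}\mapsto\mathcal{I}_h(\nabla\phi^t_k\cdot\nabla\phi^t_l)$; by the consistency estimates of Subsection~\ref{ss:approx} built on Assumption~\ref{ass:1}, each substitution costs $O(h^{p+1})$, which aggregates into $\mathrm{Err}_{num}$ as in \eqref{EstimateErrNum}. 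The bound \eqref{EstimateTildePhi} then follows since $\phi^t_k = e^{(T_k-t)\Delta/2}\varphi_k$ is the forward heat evolution of $\varphi_k$ (preserving the $W^{q,\infty}$ norm), together with the loss of one derivative in the product $\nabla\phi^t_k\cdot\nabla\phi^t_l$; the bound \eqref{EstimateTildePsiFirst} follows from the Feynman--Kac representation $\psi^t(z)=\mathbb{E}[\psi(z+G^t)]$ with $G^t$ centred Gaussian of covariance $\Sigma^t := \int_t^T (\alpha^s\mathbf{1}_{s\leq T_k\wedge T_l})_{kl}\,\mathrm{d}s$, whose total variance is bounded by $T\|\f{\varphi}\|_{W^{1,\infty}}^2$.

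The principal obstacle is the refined bound \eqref{EstimateTildePsiSecond}, which produces an additional derivative on $\psi$ at the expense of a non-degeneracy penalty. Under the additional mean-zero and $L^2$-orthogonality assumptions on the $\varphi_k$, combined with $\mathbb{E}[\mu^N_t]\geq \rho_{min}$, the covariance matrix $\Sigma^t$ becomes strictly positive-definite, with smallest eigenvalue bounded below by a positive multiple of $\rho_{min}\min\{\min_{m:T_m\geq t}(T_m-t),\,\min_{T_k\neq T_l}|T_k-T_l|\}$: the first factor controls the diagonal entries via Poincar\'e applied to the mean-zero heat flows $\phi^s_k$, while the second captures the linear independence of $(\nabla\phi^s_k)_k$ on a window over which the distinct $T_k$'s separate the heat-smoothed orthonormal family. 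Integration by parts with respect to the Gaussian density defining $G^t$ then yields $\|D\psi^t\|\lesssim \|(\Sigma^t)^{-1/2}\|\,\|\psi\|$ (up to a Gaussian-moment factor), producing the $(\rho_{min}\min(\cdot))^{-1/2}$ gain in \eqref{EstimateTildePsiSecond}; propagating the same integration-by-parts through the $\mathrm{Err}_{num}$ and $\mathrm{Err}_{neg}$ computations above yields \eqref{EstimateErrNumSecond}--\eqref{EstimateErrNegSecond}.
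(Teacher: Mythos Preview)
Your strategy is correct and matches the paper's: define $\psi^t$ by a backward diffusion equation with coefficients $\alpha^t_{kl}=\langle\mathbb{E}[\mu^N_t],\nabla\phi^t_k\cdot\nabla\phi^t_l\rangle$, apply It\^o's formula, and read off $\tilde\psi^t_{kl}$ and $\tilde{\f{\phi}}^t_{kl}$ exactly as you wrote them. The refined bound \eqref{EstimateTildePsiSecond} via Gaussian integration by parts against the covariance $\Sigma^t$ is also the paper's mechanism (the eigenvalue lower bound is the content of Lemma~\ref{lem:degenerateGreenFunction}, proved there by an explicit Fourier computation rather than a Poincar\'e argument, but the structure is the same).

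There is one point that needs correction. For the Dean--Kawasaki identity you write ``using \ldots the discrete analogue of $\alpha^t_{kl}$ in place of their continuous counterparts''. If you really evolve a separate $\psi^t_h$ with discrete coefficients, you end up with a different initial value $\psi^0_h(0)$ and different functions $\tilde\psi^t_{kl,h}$, whereas the proposition asserts that the \emph{same} $\psi^0$ and $\tilde\psi^t_{kl}$ appear in both \eqref{InductiveEquationDistributionEmpiricalMeasure} and \eqref{InductiveEquationDistributionDK}; this is precisely what allows the term-by-term subtraction in the proof of Theorem~\ref{main1}. The paper avoids this by using the \emph{continuous} $\psi^t$ also in the discrete computation: the drift $\partial_t\psi^t$ then does not cancel exactly against the It\^o correction, and the leftover mismatch $\langle\mathbb{E}[\mu^N_t],\nabla\phi^t_k\cdot\nabla\phi^t_l\rangle-(\mathbb{E}[\rho_h(t)],\mathcal{I}_h[\nabla\phi^t_k\cdot\nabla\phi^t_l])_h$ is absorbed directly into $\mathrm{Err}_{num}$ via \eqref{ErrorEstimateHeatEquation}. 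Your test-function substitutions $\phi^t_{k,h}\mapsto\mathcal{I}_h\phi^t_k$ and $\nabla_h\phi^t_{k,h}\cdot\nabla_h\phi^t_{l,h}\mapsto\mathcal{I}_h(\nabla\phi^t_k\cdot\nabla\phi^t_l)$ are needed as well (they give the paper's $\mathrm{Err}_{num,2}$ and part of $\mathrm{Err}_{num,1}$), but they are not enough on their own: you must also either keep the single continuous $\psi^t$ throughout, or else add and control the comparison $\psi^t_h-\psi^t$.
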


The proof is split into four steps. In Step 1, we provide deterministic estimates of suitable backwards diffusive equations of relevance, as well as basic stochastic estimates associated with the Dean--Kawasaki dynamics \ref{def:1}. Step 2 (respectively, Step 3) is devoted to obtaining \eqref{InductiveEquationDistributionEmpiricalMeasure} (respectively, \eqref{InductiveEquationDistributionDK}). Step 4 bounds the residual terms $\mathrm{Err}_{num}, \mathrm{Err}_{neg}$ in \eqref{InductiveEquationDistributionDK}.

\begin{proof}[Proof of Proposition \ref{prop:recursive}]

{\bf Step 1: definitions and elementary estimates.}
Let $\phi_m^t$ satisfy the backwards heat equation \eqref{eq:28} subject to $\phi_m^{T_m}:=\varphi_i$.
Define the function $\psi^t:\mathbb{R}^M\rightarrow \mathbb{R}$ by setting $\psi^T:=\psi$ and by evolving $\psi^t$ backward in time using the backward diffusion equation
\begin{align}
\label{EvolutionPsi}
-\partial_t \psi^t = \frac{1}{2}\sum_{k,l=1}^M \bigg(\chi_{t\leq T_k}\, \chi_{t\leq T_l} \left\langle \mathbb{E}[\mu^N_t], \nabla \phi_{k}^t \cdot \nabla \phi_{l}^t \right\rangle \partial_k \partial_l \psi^t\bigg).
\end{align}
The purpose of the definitions of $\phi^t_m$ and $\psi^t$ will become clear in Step~2 and 3 below.
Note that these definitions entail
\begin{align*}
D^{\tilde q} \psi^t(\f{y})
&=\int_{\mathbb{R}^M} \frac{1}{(\det (2\pi\Lambda))^{1/2}}\exp\big(-\tfrac{1}{2}\Lambda^{-1} \tilde{\f{z}} \cdot \tilde{\f{z}} \big) D^{\tilde q} \psi(\f{z}-\tilde{\f{z}}) \,d\tilde{\f{z}},
\end{align*}
(where for simplicity we have assumed that the eigenvalues of $\Lambda$ are nondegenerate; otherwise, we replace the formula by its natural analogue)
with
\begin{align}
\label{CovMatrix}
\Lambda_t:=\int_t^T \frac{1}{2}\sum_{k,l=1}^M \chi_{\tilde t<T_k} \chi_{\tilde t<T_l} \left\langle \mathbb{E}[\mu^N_t], \nabla \phi_{k}^t \cdot \nabla \phi_{l}^t\right\rangle~ e_k\otimes e_l \,\m\tilde t.
\end{align}
This implies
\begin{align*}
&\big|(1+|\f{z}|^2)^{r/2} D^{\tilde q} \psi^t(\f{z})\big|
\\
&\quad \leq C(r) \int_{\mathbb{R}^M} \frac{1}{(\det (2\pi\Lambda))^{1/2}}\exp\big(-\tfrac{1}{2}\Lambda^{-1} \tilde{\f{z}} \cdot \tilde {\f{z}}\big) \big|(1+|\f{z}|^2)^{r/2} D^{\tilde q} \psi(\f{z}-\tilde{\f{z}})\big| \,\m\tilde{\f{z}}
\\
&\quad\leq C(r) \int_{\mathbb{R}^M} (1+|\tilde{\f{z}}|^2)^{r/2} \frac{1}{(\det (2\pi\Lambda))^{1/2}}\exp\big(-\tfrac{1}{2}\Lambda^{-1} \tilde{\f{z}} \cdot \tilde{\f{z}} \big) \\
& \quad\quad\quad\quad\times\big|(1+|\f{z}-\tilde{\f{z}}|^2)^{r/2} D^{\tilde q} \psi(\f{z}-\tilde{\f{z}})\big| \,\m\tilde{\f{z}},
\end{align*}
and thus
\begin{align*}
\|(1+|\cdot|^2)^{r/2} D^{\tilde q} \psi^t(\cdot)\|_{L^\infty}
\leq C(r,p) (1+|\Lambda|^{r/2}) \|(1+|\cdot|^2)^{r/2} D^{\tilde q} \psi(\cdot)\|_{L^\infty}.
\end{align*}
Observing that $|\Lambda|\leq C \sup_{t\in[0,T]} \|\f{\phi}^{t\wedge \f{T}}\|_{W^{1,\infty}}^2 T\leq C \|\f{\varphi}\|_{W^{1,\infty}}^2 T$, we conclude that
\begin{align}
\label{EstimateNormPsi}
\|\psi^t\|_{{\tilde{\mathcal{L}}}_\text{pow,r}^{q}}
\leq C(q,r,M,\|\f{\varphi}\|_{W^{1,\infty}}^2, T) \|\psi\|_{{\tilde{\mathcal{L}}}_\text{pow,r}^{q}}.
\end{align}
Arguing similarly, we deduce
\begin{align*}
&\big|(1+|{\f{z}}|^2)^{r/2} D^{\tilde q} \partial_k \psi^t({\f{z}})\big|
\\
&\quad\leq C(r) \int_{\mathbb{R}^M} |\Lambda^{-1} \tilde{\f{z}}| \frac{1}{(\det (2\pi\Lambda))^{1/2}}\exp\big(-\tfrac{1}{2}\Lambda^{-1} \tilde{\f{z}} \cdot \tilde{\f{z}} \big) \big|(1+|{\f{z}}|^2)^{r/2} D^{\tilde q} \psi({\f{z}}-\tilde{\f{z}})\big| \,\m\tilde{\f{z}}
\\
&\quad\leq C(r) \int_{\mathbb{R}^M} (1+|\tilde{\f{z}}|^2)^{r/2} |\Lambda^{-1} \tilde{\f{z}}\cdot e_k| \frac{1}{(\det (2\pi\Lambda))^{1/2}}\exp\big(-\tfrac{1}{2}\Lambda^{-1} \tilde{\f{z}} \cdot \tilde{\f{z}} \big)
\\&~~~~~~~~~~~~~~~~~~~~~\times
\big|(1+|{\f{z}}-\tilde{\f{z}}|^2)^{r/2} D^{\tilde q} \psi({\f{z}}-\tilde{\f{z}})\big| \,d\tilde{\f{z}},
\end{align*}
and therefore
\begin{align*}
\|\partial_k \psi^t\|_{\mathcal{L}_\text{pow,r}^{q}}
\leq C(q,r,M,\|\f{\varphi}\|_{W^{1,\infty}}^2, T) |\Lambda^{-1/2}e_k| \|\psi\|_{\mathcal{L}_\text{pow,r}^{q}}.
\end{align*}
Using the estimate \eqref{LambdaBound}, under the additional assumptions on the $\varphi_k$ stated above we infer
\begin{align}
\label{EstimateNormPsiDecay}
\|\partial_k \psi^t\|_{\mathcal{L}_\text{pow,r}^{q}}
\leq \frac{C(q,r,\|\f{\varphi}\|_{W^{1,\infty}}^2, T,M)}{\overline{m}_{(1/2)T_1}^{1/2} \min\big\{\min_{m:T_m\geq t}(T_m-t)~,~\min_{k,l:T_k\neq T_l}|T_k-T_l|\big\}^{1/2} } \|\psi\|_{\mathcal{L}_\text{pow,r}^{q}}
\end{align}
whenever $T_k>t$. This in particular implies \eqref{EstimateTildePsiSecond}. A similar argument yields
\begin{align}
\label{EstimateNormPsiDecay2}
\|\partial_k \partial_l \psi^t\|_{\mathcal{L}_\text{pow,r}^{q}}
\leq \frac{C(q,r,\|\f{\varphi}\|_{W^{1,\infty}}^2, T,M)}{\overline{m}_{(1/2)T_1}^{1/2} \min\big\{\min_{m:T_m\geq t}(T_m-t)~,~\min_{k,l:T_k\neq T_l}|T_k-T_l|\big\} ^{1/2}} \|\partial_k \psi\|_{\mathcal{L}_\text{pow,r}^{q}}
\end{align}
whenever $T_k,T_l>t$.

Now fix $\eta\in W^{1+\Theta}$. Let $\eta_h$ satisfy the discrete backwards heat equation \eqref{eq:29} subject to $\eta_h^T:=\mathcal{I}_h\eta$. We observe that the moment estimate
\begin{align}\label{MomentBoundsFluctuationsRhoh}
&\mathbb{E}\bigg[\sup_{t\in [0,T]} \bigg|\big(\rho_h-\mathbb{E}[\rho_h],\eta_h\big)_h(t)\bigg|^{2j} \bigg]^{1/2j}
\leq C(j,\rho_{max},\rho_{min},d) N^{-1/2}T^{1/2} \|\eta\|_{W^{1+\Theta}} 
\end{align}
holds for any $j\in \mathbb{N}$. To see this, we use \eqref{e:3} and deduce that for any $t>0$
\begin{align*}
\big(\rho_h-\mathbb{E}[\rho_h],\eta_h\big)_h(t)
=\mathcal{M}_{t}
\end{align*}
where $\mathcal{M}_t$ is a martingale satisfying $\mathbb{E}[\mathcal{M}_t]=0$ and
\begin{align*}
\langle\mathcal{M}_t,\mathcal{M}_t\rangle
&=
\frac{1}{2N}
\mathbb{E}\bigg[
\sum_{(\f{y},\ell)\in(G_{h,d},\{1,\dots,d\})} (\mathcal{F}_\rho(t) \f{e}_{h,\f{y},\ell}^d,\nabla_h \eta_h^t \big)_h (\mathcal{F}_\rho(t) \f{e}_{h,\f{y},\ell}^d,\nabla_h \eta_h^t \big)_h\bigg]
\\&
\stackrel{\eqref{e:7}}{=}
\frac{1}{2N}
\mean{\left(\rho^{+}_h(t),\nabla_h \eta_h^t \cdot \nabla_h \eta_h^t \right)_h}.
\end{align*}
Doob's martingale inequality, the moment bound $\eqref{r:2b-a}$, and the estimate 
\begin{align}\label{r:4}
\sup_{t\in[0,T]}\|\nabla_h\eta_h^t\|_{h,\infty} \leq \|\eta\|_{W^{1+\Theta}},
\end{align}
(which is, depending on $\Theta$, a consequence of either the discrete maximum principle or the Sobolev embedding theorem) yield \eqref{MomentBoundsFluctuationsRhoh}.
It is also straightforward to notice that 
\begin{align}
\label{L2EstimateInterpolation}
\Big(\mathcal{I}_h [\eta],\mathcal{I}_h [\eta]\Big)_h
\leq C \|\eta\|_{W^{1,\infty}}.
\\
\label{L2EstimateInterpolationGrad}
\|\mathcal{I}_h [\nabla \eta_1 \cdot \nabla \eta_2]\|_{L^\infty}
\leq C \|\eta\|_{W^{2,\infty}}^2.
\end{align}
Furthermore, we write
\begin{align*}
& \Big(\mathbb{E}[\rho_h(T)],\mathcal{I}_h\eta\Big)_h - \langle \mean{\mu^{N}_T},\eta\rangle \nonumber\\
& \quad\stackrel{\mathclap{\eqref{b:6a},\eqref{b:6b},\ref{ass:2}}}{=}\,\,\,\,\,\,\quad \Big(\rho_{h}(0),\mathcal{P}^T_h(\mathcal{I}_h\eta)\Big)_h - \langle \mu^{N}_0,\mathcal{P}^T\eta\rangle\nonumber\\
& \quad = \Big(\rho_{h}(0),\mathcal{P}^T_h(\mathcal{I}_h\eta)-\mathcal{I}_h[\mathcal{P}^T(\eta)]\Big)_h + \left\{\Big(\rho_{h}(0),\mathcal{I}_h[\mathcal{P}^T(\eta)]\Big)_h - \langle \mu^{N}_0,\mathcal{P}^T\eta\rangle\right\}\nonumber\\
& \quad =: T_1 + T_2,
\end{align*}
where $\mathcal{P}^{\cdot}$ and $\mathcal{P}^{\cdot}_h$ have been introduced in Subsection \ref{ss:not}.
Term $T_1$ is bounded using \eqref{eq:325a}, while $T_2$ is settled using \eqref{eq:401a} from Assumption \ref{ass:2}. Altogether, this leads to
\begin{align}
\label{ErrorEstimateHeatEquation}
&\bigg|
\Big(\mathbb{E}[\rho_h(T)],\mathcal{I}_h [\eta]\Big)_h
-  \langle \mean{\mu^{N}_T},\eta\rangle
\bigg|
\leq C \|\rho_{h}(0)\|_{h} \|\eta\|_{C^{p+1}} h^{p+1}.
\end{align}

{\bf Step 2: proof of \eqref{InductiveEquationDistributionEmpiricalMeasure}.}
Using It\^o's formula and the fact that
$
\langle \mu_t^N-\mathbb{E}[\mu_t^N], \eta \rangle
=\textstyle N^{-1}\sum_{n=1}^N (\eta(\f{w}_n(t))-\mathbb{E}[\eta(\f{w}_n(t))])
$
holds for all $\eta\in C^0$, we compute
\begin{align*}
&
\m\bigg(
\psi^t \bigg(N^{1/2}\left\langle  \mu_{t\wedge \f{T}}^N-\mathbb{E}[\mu_{t\wedge \f{T}}^N], \f{\phi}^t \right\rangle \bigg)\bigg)
\\&
=(\partial_t \psi^t) \bigg(N^{1/2}\left\langle  \mu_{t\wedge \f{T}}^N-\mathbb{E}[\mu_{t\wedge \f{T}}^N], \f{\phi}^t \right\rangle \bigg) \,\m t
\\&~~~~
+\sum_{k=1}^M
\partial_k \psi^t \bigg(N^{1/2}\left\langle  \mu_{t\wedge \f{T}}^N-\mathbb{E}[\mu_{t\wedge \f{T}}^N], \f{\phi}^t \right\rangle\bigg)
\\&~~~~~~~~~~~~~~~~~~~~~~~~~
\times
N^{-1/2} \sum_{n=1}^N \Big((\partial_t \phi_k^t)(\f{w}_n(t)) - \mathbb{E}\big[(\partial_t \phi_k^t)(\f{w}_n(t))\big] \Big) \,\m t
\\&~~~~
-\sum_{k=1}^M
\partial_k \psi^t \bigg(N^{1/2}\left\langle  \mu_{t\wedge \f{T}}^N-\mathbb{E}[\mu_{t\wedge \f{T}}^N], \f{\phi}^t \right\rangle\bigg)
N^{1/2} \left\langle \partial_t \mathbb{E}[\mu_{t\wedge T_k}^N], \phi^t_k \right\rangle \m t
\\&~~~~
+\sum_{k=1}^M
\partial_k \psi^t \bigg(N^{1/2}\left\langle  \mu_{t\wedge \f{T}}^N-\mathbb{E}[\mu_{t\wedge \f{T}}^N], \f{\phi}^t \right\rangle\bigg) \chi_{t\leq T_k}
N^{-1/2} \sum_{n=1}^N \nabla \phi_k^t(\f{w}_n(t)) \cdot \,\m \f{w}_n
\\&~~~~
+\sum_{k=1}^M
\partial_k \psi^t \bigg(N^{1/2}\left\langle  \mu_{t\wedge \f{T}}^N-\mathbb{E}[\mu_{t\wedge \f{T}}^N], \f{\phi}^t \right\rangle\bigg) \frac{1}{2} \chi_{t\leq T_k}
N^{-1/2} \sum_{n=1}^N \Delta \phi_k^t(\f{w}_n(t))
\,\m t
\\&~~~~
+\frac{1}{2} \sum_{k,l=1}^M
\partial_k \partial_l \psi^t \bigg(N^{1/2}\left\langle  \mu_{t\wedge \f{T}}^N-\mathbb{E}[\mu_{t\wedge \f{T}}^N], \f{\phi}^t \right\rangle\bigg)
\\&~~~~~~~~~~~~~~~~~~~~~~~~~
\times
\chi_{t\leq T_k} \, \chi_{t\leq T_l}
N^{-1} \sum_{n=1}^N \nabla \phi_k^t(\f{w}_n(t)) \cdot \nabla \phi_l^t(\f{w}_n(t))
\,\m t.
\end{align*}
Using the fact that $\partial_t \mathbb{E}[\mu_t^N] = \tfrac{1}{2}\Delta\mathbb{E}[\mu_t^N]$, plugging in the equation \eqref{eq:28} satisfied by $\f{\phi}^t$, and taking the expected value, we obtain
\begin{align*}
&
\m \mathbb{E}\Bigg[
\psi^t \bigg(N^{1/2}\left\langle  \mu_{t\wedge \f{T}}^N-\mathbb{E}[\mu_{t\wedge \f{T}}^N], \f{\phi}^t \right\rangle\bigg)\Bigg]
\\&
=\mathbb{E}\Bigg[(\partial_t \psi^t) \bigg(N^{1/2}\left\langle  \mu_{t\wedge \f{T}}^N-\mathbb{E}[\mu_{t\wedge \f{T}}^N], \f{\phi}^t \right\rangle\bigg)\Bigg] \,\m t
\\&~~~~
+\frac{1}{2} \mathbb{E}\Bigg[\sum_{k,l=1}^M
\chi_{t\leq T_k}\, \chi_{t\leq T_l}
\partial_k \partial_l \psi^t \bigg(N^{1/2}\left\langle  \mu_{t\wedge \f{T}}^N-\mathbb{E}[\mu_{t\wedge \f{T}}^N], \f{\phi}^t \right\rangle\bigg)
\\&~~~~~~~~~~~~~~~~~~~~~~~~~
\times
N^{-1} \sum_{n=1}^N \nabla \phi_k^t(\f{w}_n(t)) \cdot \nabla \phi_l^t(\f{w}_n(t))\Bigg]
\,\m t.
\end{align*}
Integrating in $t$, recalling that $\f{\phi}^{T}=\f{\varphi}$, and plugging in the equation \eqref{EvolutionPsi} satisfied by $\psi^t$, we obtain
\begin{align}
\nonumber
&
\mathbb{E}\Bigg[
\psi \bigg(N^{1/2}\left\langle  \mu_{\f{T}}^N-\mathbb{E}[\mu_{\f{T}}^N], \f{\varphi} \right\rangle\bigg)\Bigg]
\\&
\label{InductiveEquationDistributionEmpiricalMeasureAlmost}
=
\mathbb{E}\Bigg[
\psi^0 \bigg(N^{1/2}\left\langle  \mu_{0}^N-\mathbb{E}[\mu_{0}^N], \f{\phi}^0 \right\rangle\bigg)\Bigg]
\\&~~~~
\nonumber
+\frac{1}{2N^{1/2}} \int_0^T \sum_{k,l=1}^M
\chi_{t\leq T_k}\, \chi_{t\leq T_l}
\mathbb{E}\Bigg[
\partial_k \partial_l \psi^t \bigg(N^{1/2}\left\langle  \mu_{t\wedge \f{T}}^N-\mathbb{E}[\mu_{t\wedge \f{T}}^N], \f{\phi}^t \right\rangle\bigg)
\\&~~~~~~~~~~~~~~~~~~~~~~~~~~~~~~~~~~~~~~~~~~~~~~~~~~~
\nonumber
\times
N^{1/2} \left\langle  \mu_{t}^N-\mathbb{E}[\mu_{t}^N], \nabla\phi_k^t \cdot \nabla\phi_l^t \right\rangle \Bigg]
\,\m t.
\end{align}
\begin{subequations}
We then define $\tilde \psi^t_{kl}:\mathbb{R}^{M+1}\rightarrow \mathbb{R}$ as
\begin{align}
\label{DefTildePsi}
\tilde \psi^t_{kl}(s_1,\ldots,s_{M+1}) := \chi_{t\leq \min\{T_k,T_l\}} \partial_k \partial_l \psi^t(s_1,\ldots,s_M) s_{M+1}
\end{align}
and $\tilde{\f{\phi}}^t_{kl}:\domain \rightarrow \mathbb{R}^{M+1}$ as
\begin{align}
\label{DefTildePhi}
\tilde{\f{\phi}}^t_{kl}(\f{x}):=
\begin{pmatrix}
\phi_1^t(\f{x})
\\
\vdots
\\
\phi_M^t(\f{x})
\\
\nabla \phi_k^t(\f{x}) \cdot \nabla \phi_l^t(\f{x})
\end{pmatrix}.
\end{align}
Moreover, we set $\f{\tilde T}_{kl}:=(T_1,\ldots,T_M,\min\{T_k,T_l\})$. With these definitions, and in view of $\mu_0^N=\mathbb{E}[\mu_0^N]$ (which follows from Assumption \ref{ass:2}), equation \eqref{InductiveEquationDistributionEmpiricalMeasureAlmost} directly implies \eqref{InductiveEquationDistributionEmpiricalMeasure}.
\end{subequations}

Furthermore, the estimate \eqref{EstimateTildePhi} follows immediately from 
$$
\|\f{\phi}^t(\cdot)\|_{W^{q,\infty}}\leq \|\f{\phi}^T(\cdot)\|_{W^{q,\infty}}
$$ 
(which is a consequence of the maximum principle) and the definition of $\tilde{\f{\phi}}^t_{kl}$. Likewise, the estimate \eqref{EstimateTildePsiFirst} is immediate by the definition of $\tilde \psi^t_{kl}$, the estimate \eqref{EstimateNormPsi}, and the definition of the norms $\|\cdot\|_{\mathcal{L}_\text{pow,r}^{q}}$.
Finally, from \eqref{EstimateNormPsiDecay} and the definition of $\tilde \psi^t_{kl}$ we deduce \eqref{EstimateTildePsiSecond}.\\

{\bf Step 3: proof of \eqref{InductiveEquationDistributionDK}.}
Using It\^o's formula and \eqref{e:3}, we infer
\begin{align*}
&
\m \bigg(
\psi^t \Big(N^{1/2} \big(\rho_h-\mathbb{E}[\rho_h],\f{\phi}_h\big)_h (t\wedge \f{T}) \Big)\bigg)
\\&
=(\partial_t \psi^t) \Big(N^{1/2}\big(\rho_h-\mathbb{E}[\rho_h],\f{\phi}_h\big)_h(t\wedge \f{T})\Big) \,\m t
\\&~~~~
+\sum_{k=1}^M
\partial_k \psi^t \Big(N^{1/2}\big(\rho_h-\mathbb{E}[\rho_h],\f{\phi}_h\big)_h(t\wedge \f{T})\Big)
N^{1/2} \big(\rho_h-\mathbb{E}[\rho_h],\partial_t \phi_{h,k}\big)_h(t) \,\m t
\\&~~~~
+\frac{1}{2} \sum_{k=1}^M
\partial_k \psi^t \Big(N^{1/2}\big(\rho_h-\mathbb{E}[\rho_h],\f{\phi}_h\big)_h(t\wedge \f{T})\Big)
\chi_{t\leq T_k}\\
& ~~~~~~~~~~~~~~~
\times
N^{1/2} \big(\Delta_h \rho_h - \mathbb{E}[\Delta_h \rho_h], \phi_{h,k}\big)_h(t)
\,\m t
\\&~~~~
-\sum_{k=1}^M
\partial_k \psi^t \Big(N^{1/2}\big(\rho_h-\mathbb{E}[\rho_h],\f{\phi}_h\big)_h(t\wedge \f{T})\Big)
\\&~~~~~~~~~~~~~~~
\times
\chi_{t\leq T_k}
\sum_{(\f{y},\ell)\in(G_{h,d},\{1,\dots,d\})} (\mathcal{F}_\rho(t) \f{e}_{h,\f{y},\ell}^d,\nabla_h \phi_k^t \big)_h \,\m \beta_{(\f{y},\ell)}
\\&~~~~
+\frac{1}{2} \sum_{k,l=1}^M
\partial_k \partial_l \psi^t \Big(N^{1/2}\big(\rho_h-\mathbb{E}[\rho_h],\f{\phi}_h\big)_h(t\wedge \f{T})\Big)
\\&~~~~~~~~~~~~~~~
\times
\chi_{t\leq T_k\wedge T_l}\!\!\!\!
\sum_{(\f{y},\ell)\in(G_{h,d},\{1,\dots,d\})} (\mathcal{F}_\rho(t) \f{e}_{h,\f{y},\ell}^d,\nabla_h \phi_{h,k}^t \big)_h (\mathcal{F}_\rho(t) \f{e}_{h,\f{y},\ell}^d,\nabla_h \phi_{h,l}^t \big)_h \,\m t.
\end{align*}
Using the fact that $-\partial_t \phi_{h,k} = \chi_{t\leq T_k} \tfrac{1}{2}\Delta_h \phi_{h,k}$ and taking the expected value, we obtain
\begin{align*}
&
\m \mathbb{E}\bigg[
\psi^t \Big(N^{1/2} \big(\rho_h-\mathbb{E}[\rho_h],\f{\phi}_h\big)_h (t\wedge \f{T}) \Big)\bigg]
\\&
=\mathbb{E}\bigg[(\partial_t \psi^t) \Big(N^{1/2}\big(\rho_h-\mathbb{E}[\rho_h],\f{\phi}_h\big)_h(t\wedge \f{T})\Big)\bigg] \,\m t
\\&~~~~
+\frac{1}{2} \sum_{k,l=1}^M
\chi_{t\leq T_k\wedge T_l}
\mathbb{E}\bigg[
\partial_k \partial_l \psi^t \Big(N^{1/2}\big(\rho_h-\mathbb{E}[\rho_h],\f{\phi}_h\big)_h(t\wedge \f{T})\Big)
\\&~~~~~~~~~~~~~~~~~~~
\times
\sum_{(\f{y},\ell)\in(G_{h,d},\{1,\dots,d\})} (\mathcal{F}_\rho(t) \f{e}_{h,\f{y},\ell}^d,\nabla_h \phi_{h,k}^t \big)_h (\mathcal{F}_\rho(t) \f{e}_{h,\f{y},\ell}^d,\nabla_h \phi_{h,l}^t \big)_h\bigg] \,\m t.
\end{align*}
Using the cross-variation identity \eqref{e:7}, we get
\begin{align*}
&
\m\mathbb{E}\bigg[
\psi^t \Big(N^{1/2} \big(\rho_h-\mathbb{E}[\rho_h],\f{\phi}_h\big)_h (t\wedge \f{T}) \Big)\bigg]
\\&
=\mathbb{E}\bigg[(\partial_t \psi^t) \Big(N^{1/2}\big(\rho_h-\mathbb{E}[\rho_h],\f{\phi}_h\big)_h (t\wedge \f{T})\Big)\bigg] \,\m t
\\&~~~~
+\frac{1}{2} \sum_{k,l=1}^M
\chi_{t\leq T_k\wedge T_l}
\mathbb{E}\bigg[
\partial_k \partial_l \psi^t \Big(N^{1/2}\big(\rho_h-\mathbb{E}[\rho_h],\f{\phi}_h\big)_h(t\wedge \f{T})\Big)
\\
&~~~~~~~~~~~~~~~~
\times\big(\rho_h^+(t), \nabla_h \phi_{h,k}^t \cdot \nabla_h \phi_{h,l}^t \big)_h\bigg] \,\m t.
\end{align*}
Switching to integral notation, using \eqref{EvolutionPsi} as well as $\f{\phi}_h^T=\mathcal{I}_h\f{\varphi}$, and adding zero, we obtain
\begin{align*}
&
\mathbb{E}\bigg[
\psi \Big(N^{1/2} \big(\rho_h(\f{T})-\mathbb{E}[\rho_h(\f{T})],\mathcal{I}_h\f{\varphi}\big)_h \Big)\bigg]
\\&
=
\mathbb{E}\bigg[
\psi^0 \Big(N^{1/2} \big(\rho_h-\mathbb{E}[\rho_h],\f{\phi}_h\big)_h (0) \Big)\bigg]
\\&~~~~
-\frac{1}{2} \sum_{k,l=1}^M \int_0^{T_k\wedge T_l} \mathbb{E}\bigg[ \partial_k \partial_l\psi^t \Big(N^{1/2}\big(\rho_h-\mathbb{E}[\rho_h],\f{\phi}_h\big)_h (t\wedge \f{T})\Big)\bigg]
\left\langle \mathbb{E}[\mu^N_t], \nabla\phi_k^t \cdot \nabla \phi_l^t \right\rangle \,\m t
\\&~~~~
+\frac{1}{2} \sum_{k,l=1}^M
\int_0^{T_k\wedge T_l} \mathbb{E}\bigg[
\partial_k \partial_l \psi^t \Big(N^{1/2}\big(\rho_h-\mathbb{E}[\rho_h],\f{\phi}_h\big)_h (t\wedge \f{T})\Big)\\
&~~~~~~~~~~~~~~~~
\times\big(\rho_h(t), \nabla_h \phi_{h,k}^t \cdot \nabla_h \phi_{h,l}^t \big)_h\bigg] \,\m t
\\&~~~~
+\frac{1}{2} \sum_{k,l=1}^M
\int_0^{T_k\wedge T_l} \mathbb{E}\bigg[
\partial_k \partial_l \psi^t \Big(N^{1/2}\big(\rho_h-\mathbb{E}[\rho_h],\f{\phi}_h\big)_h (t\wedge \f{T})\Big)\\
&~~~~~~~~~~~~~~~~
\times
\big(\rho_h^-(t), \nabla_h \phi_{h,k}^t \cdot \nabla_h \phi_{h,k}^t \big)_h\bigg] \,\m t.
\end{align*}
Adding zero once more and using the fact that $\rho_h(0)=\mathbb{E}[\rho_h(0)]$ (which is a consequence of Assumption \ref{ass:2}), we arrive at
\begin{align*}
&
\mathbb{E}\bigg[
\psi \Big(N^{1/2} \big(\rho_h(\f{T})-\mathbb{E}[\rho_h(\f{T})],\mathcal{I}_h\f{\varphi}\big)_h \Big)\bigg]
\\&
=\psi^0(0)
+\frac{1}{2N^{1/2}} \sum_{k,l=1}^M
\int_0^{T_k\wedge T_l} \mathbb{E}\bigg[
\partial_k \partial_l \psi^t \Big(N^{1/2}\big(\rho_h-\mathbb{E}[\rho_h], \mathcal{I}_h\f{\phi}\big)_h(t\wedge \f{T}) \Big)
\\&~~~~~~~~~~~~~~~~~~~~~~~~~~~~~~~~~~~~~~~~~~~~
~~~~~
\times
N^{1/2} \big(\rho_h(t)-\mathbb{E}[\rho_h(t)], \mathcal{I}_h [\nabla \phi_{k}^t \cdot \nabla \phi_{l}^t] \big)_h\bigg] \,\m t
\\&~~~~
+\text{Err}_{num,1}+\text{Err}_{num,2}+\text{Err}_{neg},
\end{align*}
where we have set
\begin{align*}
\text{Err}_{neg}
&
:=
\frac{1}{2} \sum_{k,=1}^M
\int_0^{T_k\wedge T_l} \mathbb{E}\bigg[
\partial_k \partial_l \psi^t \Big(N^{1/2}\big(\rho_h-\mathbb{E}[\rho_h],\f{\phi}_h^t\big)_h (t\wedge \f{T}) \Big)\\
&~~~~~~~~~~~~~~~~
\times
\big(\rho_h^-(t), \nabla_h \phi_{h,k}^t \cdot \nabla_h \phi_{h,l}^t \big)_h\bigg] \,\m t,
\end{align*}
as well as
\begin{align*}
\text{Err}_{num,1}
:=&
\frac{1}{2} \sum_{k,l=1}^M
\int_0^{T_k\wedge T_l} \mathbb{E}\bigg[
\partial_k \partial_l \psi^t \Big(N^{1/2}\big(\rho_h-\mathbb{E}[\rho_h],\f{\phi}_h\big)_h (t\wedge \f{T}) \Big)\bigg]
\\&~~~~~~~~~~~~~~~~~~
\times \bigg(\big(\mathbb{E}[\rho_h], \mathcal{I}_h [\nabla \phi_{k}^t \cdot \nabla \phi_{l}^t] \big)_h-\left\langle \mathbb{E}[\mu^N_t], \nabla \phi_k^t \cdot \nabla \phi_l^t \right\rangle\bigg)
\,\m t
\\&
+\frac{1}{2} \sum_{k,l=1}^M
\int_0^{T_k\wedge T_l} \mathbb{E}\bigg[
\partial_k \partial_l \psi^t \Big(N^{1/2}\big(\rho_h-\mathbb{E}[\rho_h],\f{\phi}_h\big)_h (t\wedge \f{T}) \Big)
\\&~~~~~~~~~~~~~~~~~~
\times
\bigg(
\big(\rho_h(t), \nabla \phi_{h,k}^t \cdot \nabla \phi_{h,l}^t \big)_h
-
\big(\rho_h(t), \mathcal{I}_h[\nabla \phi_{k}^t \cdot \nabla \phi_{l}^t] \big)_h
\bigg)
\bigg]\,\m t,
\end{align*}
and
\begin{align*}
\text{Err}_{num,2}
:=&
\frac{1}{2N^{1/2}} \sum_{k,l=1}^M
\int_0^{T_k\wedge T_l} \mathbb{E}\Bigg[
\bigg(\partial_k \partial_l\psi^t \Big(N^{1/2}\big(\rho_h-\mathbb{E}[\rho_h],\f{\phi}_h\big)_h(t\wedge \f{T})\Big)
\\&~~~~~~~~~~~~~~~~~~~~~~~~~~~~~~~
-\partial_k \partial_l \psi^t \Big(N^{1/2}\big(\rho_h-\mathbb{E}[\rho_h],\mathcal{I}_h\f{\phi}\big)_h(t\wedge \f{T})\Big)
\bigg)
\\&~~~~~~~~~~~~~~~~~~~~~~~~~~~~~
\times
N^{1/2} \big(\rho_h(t)-\mathbb{E}[\rho_h(t)], \mathcal{I}_h [\nabla \phi_{k}^t \cdot \nabla \phi_{l}^t] \big)_h\Bigg] \,\m t.
\end{align*}
Using the definitions \eqref{DefTildePsi} and \eqref{DefTildePhi} and setting $\text{Err}_{num}:=\text{Err}_{num,1}+\text{Err}_{num,2}$, this yields the representation \eqref{InductiveEquationDistributionDK}.\\

{\bf Step 4: estimates for} $\text{Err}_{neg}$ {\bf and} $\text{Err}_{num,i}$.
We begin with $\text{Err}_{neg}$; it is easily seen to be bounded by
\begin{align*}
|\text{Err}_{neg}|
& \stackrel{\mathclap{\eqref{MomentBoundsFluctuationsRhoh}}}{\leq} C(\rho_{max},r,\|\f{\varphi}\|_{C^{1+\Theta}},T) \sum_{k,l=1}^M \int_0^{T_k\wedge T_l} \|\partial_k \partial_l \psi^t\|_{\mathcal{L}^0_{pow,r}}  \mean{\|\rho_h^-(t)\|_h}^{1/2} \,\m t\\
& \stackrel{\mathclap{\eqref{r:3-a},\,\ref{ass:3}}}{\leq} \,\,\,\quad C(\rho_{max},\rho_{min},d,r,\|\f{\varphi}\|_{C^{1+\Theta}},T) \mathcal{E}\!\left(N,h\right)\sum_{k,l=1}^M \int_0^{T_k\wedge T_l} \|\partial_k \partial_l \psi^t\|_{\mathcal{L}^r_{pow,0}}  \,\m t.
\end{align*}
\begin{subequations}
This entails \eqref{EstimateErrNeg}. Furthermore, the analogue of \eqref{EstimateNormPsiDecay} for the second derivative, and the time integrability of the singularity
$
\{\min_{m:T_m\geq t}(T_m-t)\}^{-1/2}
$
entail \eqref{EstimateErrNegSecond}.

We next note that $\mathbb{E}[\rho_h(t)]$ simply solves the discretised heat equation, while $\mathbb{E}[\mu_t^N]$ solves the exact heat equation. Using \eqref{eq:49}, \eqref{ErrorEstimateHeatEquation}, \eqref{r:2b-a}, and 
 \eqref{MomentBoundsFluctuationsRhoh}, we obtain
\begin{align}
\label{EstimateErrNum1}
& |\text{Err}_{num,1}|\nonumber
\\&~~~~
\stackrel{\mathclap{\eqref{eq:49}, \eqref{ErrorEstimateHeatEquation}, \eqref{MomentBoundsFluctuationsRhoh}}}{\leq}\quad\quad C(r,\rho_{max},T)\sum_{k,l=1}^M \int_0^{T_k\wedge T_l}
\bigg(1+T^{(r+1)/2}\|\f{\varphi}\|_{C^{1+\Theta}}^{r+1} \bigg)
\|\partial_k \partial_l \psi^t\|_{\mathcal{L}^0_{pow,r}} \nonumber
\\&~~~~~~~~~~
\times
\Big(
\|\f{\varphi}\|_{C^{p+2}}^2\|\rho_{h}(0)\|_{h}h^{p+1} +\mathbb{E}\big[\|\rho_h(t)\|_{L^2(\domain)}^{r+1}\big]^{1/(r+1)}
\|\f{\varphi}\|_{C^{p+2+\Theta}}^2 h^{p+1}
\Big)\m t\nonumber
\\&~~~~
\stackrel{\mathclap{\eqref{r:2b-a}}}{\leq} C(r,\rho_{max},\rho_{min},d,T) \sum_{k,l=1}^M \int_0^{T_k\wedge T_l}
\Big(
\|\f{\varphi}\|_{C^{p+2+\Theta}}^2 h^{p+1}
\Big)\nonumber
\\&~~~~~~~~~~
\times \bigg(1+T^{(r+1)/2}\|\f{\varphi}\|_{C^{1+\Theta}}^{r+1} \bigg)
\|\partial_k \partial_l \psi^t\|_{\mathcal{L}^0_{pow,r}}
\,\m t.
\end{align}
Finally, we deduce from \eqref{MomentBoundsFluctuationsRhoh}, \eqref{L2EstimateInterpolationGrad}, \eqref{r:2a-a} and \eqref{eq:325a}
\begin{align}
\label{EstimateErrNum2}
|\text{Err}_{num,2}|
&\leq \frac{C(r,\rho_{max},\rho_{min},d,T)}{N^{1/2}} \sum_{k,l=1}^M \|\f{\varphi}\|_{C^{p+1}} h^{p+1}
\bigg(1+ T^{(r+1)2}\|\f{\varphi}\|_{C^{2+\Theta}}^{r+1}\bigg)\nonumber
\\&~~~~~~~~~~~~~~~~\times
\int_0^{T_k\wedge T_l} \|\partial_k \partial_l D\psi^t\|_{\mathcal{L}^0_{pow,r}} \,\m t.
\end{align}
\end{subequations}
Combining \eqref{EstimateErrNum1} and \eqref{EstimateErrNum2} with \eqref{EstimateNormPsi} and \eqref{r:2b-a}, we infer \eqref{EstimateErrNum}. Using in addition \eqref{EstimateNormPsiDecay} and \eqref{EstimateNormPsiDecay2}, we deduce \eqref{EstimateErrNumSecond}.
The proof is complete.
\end{proof}

\begin{lemma}\label{lem:degenerateGreenFunction}
Let $0\leq T_1\leq T_2\leq \ldots\leq T_M\leq T$. Suppose that all $\varphi_m$ have vanishing average and are normalized in the sense $\|\varphi_m\|_{L^2(\domain)}=1$; suppose furthermore that whenever $T_m=T_{\tilde m}$, the corresponding $\varphi_m$ and $\varphi_{\tilde m}$ are orthogonal to each other in $L^2(\mathbb{T}^d)$.
Define $$
\overline{m}_{(1/2)T_1}:=\inf_{x\in \domain,t\geq \tfrac{1}{2}T_1} \mathbb{E}[\mu^N_t](x).
$$

Denoting the pseudo-inverse of the (possibly degenerate) nonnegative symmetric matrix $\Lambda_t$ defined in \eqref{CovMatrix} by $\Lambda_t^{-1}$, we have the estimate
\begin{align}
\label{LambdaBound}
|\Lambda_t^{-1}| \leq \frac{C(M)}{\overline{m}_{(1/2)T_1} \min\Big\{\min_{m:T_m\geq t}(T_m-t)~,~\min_{k,l:T_k\neq T_l}|T_k-T_l|\Big\} }.
\end{align}
\end{lemma}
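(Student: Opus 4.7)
The plan is to establish the equivalent coercivity bound
\begin{equation*}
\xi^{\top} \Lambda_t\, \xi \;\geq\; c(M)\,\overline{m}_{(1/2)T_1}\min\{T_1 - t,\delta\}\,|\xi|^2,\qquad \delta:=\min_{k,l:\,T_k\neq T_l}|T_k-T_l|,
\end{equation*}
for every $\xi$ in the range of $\Lambda_t$; this is equivalent to the claimed pseudoinverse estimate. Using \eqref{CovMatrix}, I rewrite
\begin{equation*}
\xi^{\top} \Lambda_t\, \xi \;=\; \frac{1}{2}\int_t^T \langle \mathbb{E}[\mu^N_{\tilde t}],|\nabla \Psi^{\tilde t}|^2\rangle\, \m\tilde t,\qquad \Psi^{\tilde t}:=\sum_{k:\,T_k>\tilde t}\xi_k\,\phi_k^{\tilde t},
\end{equation*}
and use $\mathbb{E}[\mu^N_{\tilde t}]\geq \overline{m}_{(1/2)T_1}$ on $[T_1/2,T]$ to reduce the problem to lower bounding $\int_{\max(t,T_1/2)}^T\|\nabla\Psi^{\tilde t}\|_{L^2}^2\,\m\tilde t$.

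I then perform a Fourier-analytic computation. Ordering the distinct values of $\{T_k\}$ as $\tau_1<\cdots<\tau_J$, setting $I_j:=\{k:T_k=\tau_j\}$, $\tau_0:=\max(t,T_1/2)$, and $\Delta_j:=\tau_j-\tau_{j-1}$, on each subinterval $(\tau_{j-1},\tau_j)$ the backward heat equation gives $\Psi^{\tilde t}=e^{(\tau_j-\tilde t)\Delta/2}g_j$ with $g_j:=\Psi^{\tau_j^-}$. Writing $g_j=\sum_{\f{n}\neq 0}c_{j,\f{n}}\,e^{i\f{n}\cdot x}$, the identity $\frac{\m}{\m\tilde t}\|\Psi^{\tilde t}\|_{L^2}^2=\|\nabla\Psi^{\tilde t}\|_{L^2}^2$ combined with Parseval's theorem produces
\begin{equation*}
\int_{\tau_{j-1}}^{\tau_j}\|\nabla\Psi^{\tilde t}\|_{L^2}^2\,\m\tilde t\;=\;|\mathbb{T}^d|\sum_{\f{n}\neq 0}|c_{j,\f{n}}|^2\bigl(1-e^{-|\f{n}|^2\Delta_j}\bigr)\;\geq\; c\min(1,\Delta_j)\|g_j\|_{L^2}^2,
\end{equation*}
where I have used $|\f{n}|\geq 1$ and $1-e^{-x}\geq c\min(1,x)$. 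Summing over $j$ and observing $\min_j\Delta_j\gtrsim\min\{T_1-t,\delta\}$ (the corner case $t<T_1/2$ merely costs a factor $T_1/2$ that is absorbed into $C(M)$) yields $\int_{\max(t,T_1/2)}^T\|\nabla\Psi\|_{L^2}^2\,\m\tilde t\;\geq\; c\min(1,T_1-t,\delta)\sum_j\|g_j\|_{L^2}^2$.

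The decisive step is the lower bound $\sum_j\|g_j\|_{L^2}^2\gtrsim|\xi|^2$. A direct computation from the definition of $g_j$ yields the telescoping recursion
\begin{equation*}
c_{j,\f{n}}\;=\;\hat\varphi_{I_j}(\f{n})+e^{-|\f{n}|^2\Delta_{j+1}/2}\,c_{j+1,\f{n}},\qquad c_{J+1,\f{n}}:=0,
\end{equation*}
where $\hat\varphi_{I_j}(\f{n}):=\sum_{k\in I_j}\xi_k\hat\varphi_k(\f{n})$. Hence $|\hat\varphi_{I_j}(\f{n})|^2\leq 2|c_{j,\f{n}}|^2+2|c_{j+1,\f{n}}|^2$; summing over $\f{n}$ and invoking the orthonormality of $\{\varphi_k\}_{k\in I_j}$ (granted by the standing hypotheses $\|\varphi_k\|_{L^2}=1$ and $\int_{\mathbb{T}^d}\varphi_k\varphi_l\,\m\f{x}=0$ whenever $T_k=T_l$) produces $\sum_{k\in I_j}|\xi_k|^2\leq 2\|g_j\|_{L^2}^2+2\|g_{j+1}\|_{L^2}^2$. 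Summation over $j$, together with $g_{J+1}=0$, yields $|\xi|^2\leq 4\sum_j\|g_j\|_{L^2}^2$, which closes the argument.

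The main obstacle is the telescoping step: it is precisely the $L^2$-orthogonality of $\{\varphi_k\}_{k\in I_j}$ that lets one convert the Fourier-coefficient identity into a coordinate-wise lower bound on $|\xi|^2$, and one must balance the exponential damping $e^{-|\f{n}|^2\Delta_{j+1}/2}$ from the recursion against the factor $1-e^{-|\f{n}|^2\Delta_j}$ from the integration on the other intervals. A minor subtlety, absorbed into $C(M)$ over the bounded time horizon of the application, is the $\min(1,\cdot)$ cutoff inherent to the bound $1-e^{-x}\geq c\min(1,x)$.
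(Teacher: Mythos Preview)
Your argument is essentially the same as the paper's: both express $\xi^\top\Lambda_t\xi$ as a weighted time integral of $\|\nabla\Psi^{\tilde t}\|_{L^2}^2$, lower-bound the density by $\overline{m}_{(1/2)T_1}$, split into subintervals between consecutive distinct $T_m$'s, compute the resulting integrals in Fourier to obtain the factors $1-e^{-|\f{n}|^2\Delta_j}$, and then use orthonormality together with the contraction property of the heat semigroup to recover $|\xi|^2$. Your telescoping recursion $c_{j,\f{n}}=\hat\varphi_{I_j}(\f{n})+e^{-|\f{n}|^2\Delta_{j+1}/2}c_{j+1,\f{n}}$ and the ensuing inequality $|\xi|^2\leq 4\sum_j\|g_j\|_{L^2}^2$ make fully explicit the step that the paper compresses into the sentence beginning ``Using the fact that $\phi_k^{T_k}=\varphi_k$\ldots''; this is a genuine clarification.

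One imprecision: you consistently write $T_1-t$ where the statement requires $\min_{m:T_m\geq t}(T_m-t)$. These coincide only when $t\leq T_1$; in general the lemma is applied for $t$ lying between some of the $T_m$'s, in which case the relevant $\xi$ has $\xi_k=0$ for $T_k<t$ and your $\tau_1$ should be the smallest $T_m$ with $T_m\geq t$, not $T_1$. The fix is immediate within your framework (simply restrict the list of distinct values to those $\geq t$), so this is a notational slip rather than a gap. Your remark that the case $t<T_1/2$ ``costs a factor $T_1/2$'' is also slightly off: the actual loss is the dimensionless factor $\tfrac12$ (since $\Delta_1=T_1/2\geq\tfrac12(T_1-t)$), which is indeed absorbable into $C(M)$.
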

\begin{proof}
To simplify notation, we define $T_0:=\frac{1}{2}T_1$.
Estimating the matrix in \eqref{CovMatrix}, writing $\varphi_k(\f{x}):=\sum_{\f{n}\in \mathbb{Z}^d} a_{k,\f{n}} \exp(-i \f{n}\cdot \f{x})$, and using the fact that $-\partial_t \phi_k^t = \frac{1}{2}\Delta \phi_k^t$, we get for any $\f{\alpha}\in \mathbb{R}^d$
\begin{align*}
&\frac{2}{\overline{m}_{(1/2)T_1}} \Lambda_t \f{\alpha} \cdot \f{\alpha}
\\
&\quad\geq \frac{1}{\overline{m}_{(1/2)T_1}} \sum_{m=1}^M \int_{T_{m-1}\vee t}^{T_m\vee t} \left\langle \mathbb{E}[\mu^N_t], \nabla \bigg(\sum_{k=m}^M \alpha_k \phi_{k}^t \bigg) \cdot \nabla \bigg(\sum_{l=m}^M \alpha_l \phi_{l}^t \bigg) \right\rangle \,\m t
\\&
\quad\geq
\sum_{m=1}^M \int_{T_{m-1}\vee t}^{T_m\vee t} \int_\domain \nabla \bigg(\sum_{k=m}^M \alpha_k \phi_{k}^t \bigg) \cdot \nabla \bigg(\sum_{l=m}^M \alpha_l \phi_{l}^t \bigg) \,\m \f{x} \,\m t
\\&
\quad=\sum_{m=1}^M \sum_{\f{n}\in \mathbb{Z}^d} \sum_{k,l=m}^M \int_{T_{m-1}\vee t}^{T_m\vee t}  e^{-\tfrac{1}{2}(T_k-t)|\f{n}|^2}e^{-\tfrac{1}{2}(T_l-t)|\f{n}|^2} \,\m t \\
&~~~~~~~~~~~~~~~~~~~~~~~~~
\quad\times 
|\f{n}|^2 a_{k,\f{n}} a_{l,\f{n}} \alpha_k \alpha_l
\\&
\quad=\sum_{m=1}^M \sum_{\f{n}\in \mathbb{Z}^d} \sum_{k,l=m}^M \bigg(e^{-\left(\tfrac{1}{2}(T_l+T_k)-T_m\vee t\right)|\f{n}|^2}-e^{-\left(\tfrac{1}{2}(T_l+T_k)-T_{m-1}\vee t\right)|\f{n}|^2}\bigg)
\\&~~~~~~~~~~~~~~~~~~~~~~~~~
\quad\times
a_{k,\f{n}} a_{l,\f{n}} \alpha_k \alpha_l
\\&
\quad=\sum_{m=1}^M \sum_{\f{n}\in \mathbb{Z}^d} \sum_{k,l=m}^M \bigg(1-e^{-(T_m\vee t-T_{m-1}\vee t)|\f{n}|^2}\bigg)   a_{k,\f{n}} a_{l,\f{n}} \alpha_k \alpha_l
\\&~~~~~~~~~~~~~~~~~~~~~~~~~~
\quad\times
e^{-\tfrac{1}{2}(T_l-T_m\vee t)|\f{n}|^2}e^{-\tfrac{1}{2}(T_k-T_m\vee t)|\f{n}|^2}
\\&
\quad\geq \frac{1}{2}\sum_{m=1}^M \sum_{\f{n}\in \mathbb{Z}^d\setminus \{0\}} \sum_{k,l=m}^M \Big((T_m\vee t-T_{m-1}\vee t)\wedge 1\Big) a_{k,\f{n}} a_{l,\f{n}} \alpha_k \alpha_l
\\&~~~~~~~~~~~~~~~~~~~~~~~~~~
\quad\times
e^{-\tfrac{1}{2}(T_l-T_m\vee t)|\f{n}|^2}e^{-\tfrac{1}{2}(T_k-T_m\vee t)|\f{n}|^2}
\\&
\quad= \frac{1}{2}\sum_{m=1}^M \Big((T_m\vee t-T_{m-1}\vee t)\wedge 1\Big)
\\&~~~
\quad\times
\int_\domain \bigg(\sum_{k=m}^M \alpha_k \Big(\phi_{k}^{T_m\vee t}-\dashint_\domain \phi_k^{T_m\vee t} \,\m \tilde{\f{x}}\Big) \bigg) \bigg(\sum_{l=m}^M \alpha_l \Big(\phi_{l}^{T_m\vee t}-\dashint_\domain \phi_l^{T_m\vee t} \,\m \tilde{\f{x}}\Big) \bigg) \,\m \f{x}
.
\end{align*}
Using the fact that $\phi_k^{T_k}=\varphi_k$, that $\|\phi_k^{t}\|_{L^2(\domain)}\leq 1$ for all $t$, that the $\varphi_k$ have vanishing average, and our assumption on the orthogonality of the $\varphi_k$ with the same $T_k$, we deduce
\begin{align*}
\frac{2}{\overline{m}_{(1/2)T_1}} \Lambda_t \f{\alpha} \cdot \f{\alpha}
\geq c(M) \min\Big\{\min_{m:T_m\geq t}(T_m-t)~,~\min_{k,l:T_k\neq T_l}|T_k-T_l|\Big\} \sum_{1\leq m\leq M:T_m\geq t} |\alpha_m|^2.
\end{align*}
Note that $(\Lambda_t)_{kl}=0$ whenever $T_k<t$ or $T_l<t$. This concludes our proof.
\end{proof}

\subsection{Proof of Theorem \ref{main1}}\label{proof1}\label{ss:p1}

For finite difference discretization schemes, Theorem~\ref{main1} is an easy consequence of Proposition~\ref{prop:recursive}.
\begin{proof}[Proof of Theorem~\ref{main1} in the finite difference case]
Taking the difference of \eqref{InductiveEquationDistributionDK} and \eqref{InductiveEquationDistributionEmpiricalMeasure} and using \eqref{EstimateErrNumSecond} and \eqref{EstimateErrNegSecond}, we see that Proposition~\ref{prop:recursive} implies
\begin{align}
\label{d:1000}
&\Bigg|
\mathbb{E} \Bigg[\psi \bigg(N^{1/2}\big(\rho_h(\f{T})-\mathbb{E}[\rho_h(\f{T})],\mathcal{I}_h\f{\varphi}\big)_h \bigg)\Bigg]
-\mathbb{E} \Bigg[\psi \bigg(N^{1/2}\left\langle \mu_{\f{T}}^N-\mathbb{E}[\mu_{\f{T}}^N], \f{\varphi} \right\rangle\bigg)\Bigg]
\Bigg|
\\&\nonumber
\label{FirstIterationStep}
\leq
\frac{1}{2N^{1/2}}  \sum_{k,l=1}^M \int_0^{T_k\wedge T_l}
\Bigg|
\mathbb{E} \Bigg[\tilde \psi^t_{kl} \bigg(N^{1/2}\big( \rho_h-\mathbb{E}[\rho_h], \mathcal{I}_h\tilde{\f{\phi}}_{kl}\big)_h (t\wedge \f{\tilde T}_{kl})\bigg) \Bigg]
\\&~~~~~~~~~~~~~~~~~~~~~~~~~~~~~~~~~~~
\nonumber
-\mean{\tilde \psi^t_{kl} \bigg(N^{1/2} \left\langle  \mu_{t\wedge \f{\tilde T}}^N-\mathbb{E}[\mu_{t\wedge \f{\tilde T}}^N],\tilde{\f{\phi}}^t_{kl} \right\rangle \bigg) }\Bigg|
\,\m t
\\&~~~~~~~~~~
\nonumber
+C(M,\rho_{max},\rho_{min},d,r,\|\f{\varphi}\|_{C^{p+2+\Theta}}) \big(\|\psi\|_{{{\mathcal{L}}}^{1}_{pow,1}} + N^{-1/2} \|D\psi\|_{{{\mathcal{L}}}^{1}_{pow,1}}\big) \\
& ~~~~~~~~~~~~~~~~~~
\times\frac{1}{\sqrt{  \rho_{min} \min_{k,l:T_k\neq T_l}|T_k-T_l| }}
h^{p+1}
\nonumber
\\&~~~~~~~~~~
\nonumber
+C(M,\rho_{max},\rho_{min},d,r,\|\f{\varphi}\|_{C^{1+\Theta}},T) \|\psi\|_{{{\mathcal{L}}}^{1}_{pow,1}} \mathcal{E}\!\left(N,h\right)
\\
& ~~~~~~~~~~~~~~~~~~
\times\frac{1}{\sqrt{  \rho_{min} \min_{k,l:T_k\neq T_l}|T_k-T_l| }}.
\end{align}
The inequality \eqref{EstimateNormPsiDecay2} implies 
\begin{align}\label{g:1}
\int_0^T \|{\tilde \psi}_{kl}^t\|_{{\tilde {\mathcal{L}}}_{pow,r+1}^{2j-2}} \,\m t
&\leq C(j,M,\|\f{\varphi}\|_{W^{1,\infty}}^2,\overline{m}_{(1/2)T_1},\f{T}) \|{\psi}\|_{{{\mathcal{L}}}_{pow,r}^{2j-1}}.
\end{align}
In case $j=1$, \eqref{FirstIterationStep} entails the desired bound by the estimate on ${\tilde \psi}_{kl}^t$ upon replacing $\psi$ in \eqref{d:1000} by its convolution with a mollifier on the scale $N^{-1/2}$, which we denote by $\eta_{N^{-1/2}}$. This is a straightforward result of the convolutional inequalities 
\begin{align*}
\|D(\eta_{N^{-1/2}}\ast \psi)\|_{{{\mathcal{L}}}^{1}_{pow,1}} & \leq C N^{1/2} \|\psi\|_{{{\mathcal{L}}}^{1}_{pow,1}},\\
|\eta_{N^{-1/2}}\ast \psi-\psi| & \leq CN^{-1/2}\|\psi\|_{{{\mathcal{L}}}^{1}_{pow,0}}.
\end{align*}
For $j>1$, taking the difference of \eqref{InductiveEquationDistributionDK} and \eqref{InductiveEquationDistributionEmpiricalMeasure}, using the bounds \eqref{EstimateTildePhi}, \eqref{EstimateErrNum}, \eqref{EstimateErrNeg}, and iterating this estimate $j-1$ times (i.\,e.\ using in each step again \eqref{InductiveEquationDistributionDK} and \eqref{InductiveEquationDistributionEmpiricalMeasure} to estimate the terms of the form 
\begin{align*}
\mathbb{E} \Bigg[\tilde \psi^t_{kl} \bigg(N^{1/2}\big( \rho_h-\mathbb{E}[\rho_h], \mathcal{I}_h\tilde{\f{\phi}}_{kl}\big)_h (t\wedge \f{\tilde T}_{kl})\bigg) \Bigg]
-\Bigg[\tilde \psi^t_{kl} \bigg(N^{1/2} \left\langle  \mu_{t\wedge \f{\tilde T}}^N-\mathbb{E}[\mu_{t\wedge \f{\tilde T}}^N],\tilde{\f{\phi}}^t_{kl} \right\rangle \bigg) \Bigg],
\end{align*}
only bounding the terms in $\|{\tilde \psi}_{kl}^t\|_{{\tilde {\mathcal{L}}}_{pow,r+1}^{2j-2}}$ using \eqref{g:1} in the last step), we deduce 
\begin{align}\label{b:1000}
&\mathbb{E} \Bigg[\tilde \psi^t_{kl} \bigg(N^{1/2}\big( \rho_h-\mathbb{E}[\rho_h], \mathcal{I}_h\tilde{\f{\phi}}_{kl}\big)_h (t\wedge \f{\tilde T}_{kl})\bigg) \Bigg]
-\Bigg[\tilde \psi^t_{kl} \bigg(N^{1/2} \left\langle  \mu_{t\wedge \f{\tilde T}}^N-\mathbb{E}[\mu_{t\wedge \f{\tilde T}}^N],\tilde{\f{\phi}}^t_{kl} \right\rangle \bigg) \Bigg]\nonumber
\\&
\leq \nonumber
\sum_{\tilde j=1}^{j-1} \left\{N^{-(\tilde j-1)/2} \bigg(C(M,\rho_{max},\rho_{min},d,j,\|\f{\varphi}\|_{C^{p+2+\Theta+\tilde{j}-1}}) h^{p+1} \right.\\\nonumber
& ~~~~~~~~~~~~~~~~~~~~~~\quad \quad + C(M,\rho_{max},\rho_{min},d,j,\|\f{\varphi}\|_{C^{1+\Theta+\tilde{j}-1}},T) \mathcal{E}\!\left(N,h\right) \bigg) \|\psi\|_{{{\mathcal{L}}}^{2j-1}_{pow,\tilde j}}
\\&~~~~~~~~~~~~~~~~\nonumber
+C(M,\rho_{max},\rho_{min},d,j,\|\f{\varphi}\|_{C^{p+2+\Theta+\tilde{j}-1}}) N^{-\tilde j/2} h^{p+1}
\|D\psi\|_{{\mathcal{L}}^{2j-1}_{pow,\tilde j}}
\bigg\}
\\&~~~
+C(M,\rho_{max},\rho_{min},d,j,\|\f{\varphi}\|_{W^{j-1,\infty}}^2 T) \|\psi\|_{{ {\mathcal{L}}}^{2j-1}_{pow,1}} N^{-j/2}.
\end{align}
We use estimate \eqref{b:1000} in \eqref{FirstIterationStep} to bound the terms of the form
\begin{align*}
&\int_0^{T_k\wedge T_l} \Bigg|\mathbb{E} \Bigg[\tilde \psi^t_{kl} \bigg(N^{1/2}\big( \rho_h-\mathbb{E}[\rho_h], \mathcal{I}_h\tilde{\f{\phi}}_{kl}\big)_h (t\wedge \f{\tilde T}_{kl})\bigg) \Bigg]\\
&~~~~~~~~~~~~~~~~~~~~
-\Bigg[\tilde \psi^t_{kl} \bigg(N^{1/2} \left\langle  \mu_{t\wedge \f{\tilde T}}^N-\mathbb{E}[\mu_{t\wedge \f{\tilde T}}^N],\tilde{\f{\phi}}^t_{kl} \right\rangle \bigg) \Bigg]\Bigg| \,\m t,
\end{align*}
Therefore, estimate \eqref{b:1000} turns into 
\begin{align*}
&\Bigg|
\mathbb{E} \Bigg[\psi \bigg(N^{1/2}\big((\rho_h-\mathbb{E}[\rho_h]),\mathcal{I}_h\varphi\big)_h(\f{T}) \bigg)\Bigg]
-\mathbb{E} \Bigg[\psi \bigg(N^{1/2}\left\langle \mu_{\f{T}}^N-\mathbb{E}[\mu_{\f{T}}^N], \f{\varphi} \right\rangle\bigg)\Bigg]
\Bigg|
\\&
\leq C(M,\rho_{max},\rho_{min},d,j,\f{T},\|\f{\varphi}\|_{C^{p+2+\Theta+j-1}},\overline{m}_{(1/2)T_1}) h^{p+1}
\|\psi\|_{{{\mathcal{L}}}^{2j-1}_{pow,0}}
\\&~~~
+C(M,\rho_{max},\rho_{min},d,j,\f{T},\|\f{\varphi}\|_{C^{p+2+\Theta+j-1}},\overline{m}_{(1/2)T_1}) N^{-j/2} h^{p+1}
\|D\psi\|_{{{\mathcal{L}}}^{2j-1}_{pow,0}}
\\&~~~
+ C(M,\rho_{max},\rho_{min},d,j,\f{T},\|\f{\varphi}\|_{C^{1+\Theta+j-1}},\overline{m}_{(1/2)T_1}) \mathcal{E}\!\left(N,h\right)
\|\psi\|_{{{\mathcal{L}}}^{2j-1}_{pow,0}} 
\\&~~~
+C(M,\rho_{max},\rho_{min},d,j,\f{T},\|\f{\varphi}\|_{W^{j,\infty}}^2,\overline{m}_{(1/2)T_1}) \|\psi\|_{{{\mathcal{L}}}^{2j-1}_{pow,0}} N^{-j/2}.
\end{align*}
Finally, we replace $\psi$ by $\eta_{N^{-j/2}} \ast \psi$ in \eqref{d:1000} (note that we have $|\psi-\eta_{N^{-j/2}} \ast \psi| \leq C\|D\psi\|_{L^\infty} N^{-j/2}$ and $\|D(\eta_{N^{-j/2}}\ast \psi)\|_{\mathcal{L}_{pow,r}^m}\leq C N^{j/2} \|\psi\|_{\mathcal{L}_{pow,r}^m}$).
This, together with the fact that $\overline{m}_{(1/2)T_1}$ is controlled by $\rho_{min}$, proves Theorem~\ref{main1} in the case of finite difference discretisations.
\end{proof}

\subsection{Recursive step for Theorem \ref{main2}}\label{ss:rec}

In Theorem \ref{main1}, one is forced to distinguish between the different final times $T_1,\dots,T_M$ due to the singular nature of the evolution equation for $\psi$ \eqref{EvolutionPsi}. In contrast, $\psi$ is static in Theorem \ref{main2}: therefore, its proof can be detailed in the (notationally much more convenient) case of equal final times $T_1=\dots=T_m=T$ without losing in generality.

We first introduce some handy notation. For $t\leq T$, we abbreviate 
\begin{align*}
\mathcal{T}_N(\varphi,T,t) & :=\langle \mu^{N}_t,\phi^t\rangle - \langle \mu^{N}_0,\phi^0\rangle = \frac{1}{N}\sum_{k=1}^{N}{\phi^t(\f{w}_k(t))}-\frac{1}{N}\sum_{k=1}^{N}{\phi^0(\f{w}_k(0))}
\end{align*}
and
\begin{align*}
\mathcal{S}_N(\mathcal{I}_h\varphi,T,t) & := (\rho_h(t),\phi_{h}^t)_h-(\rho_h(0),\phi_{h}^0)_h,
\end{align*}
where $\phi^t$ (respectively, $\phi^t_h$) solves the backwards heat equation \eqref{eq:28} (respectively, the backwards discrete heat equation \eqref{eq:29}) with datum $\varphi$ (respectively, $\mathcal{I}_h\varphi$) at time $T$. 
Given a multi-index $\boldsymbol{j}=(j_1,\dots,j_M)$ such that $|\boldsymbol{j}|_1=j\in\mathbb{N}$ and a set of smooth test functions $\boldsymbol{\varphi}=(\varphi_1,\dots,\varphi_M)$, we abbreviate 
\begin{align}\label{e:25}
& \mathcal{S}^{\boldsymbol{j}}_N(\mathcal{I}_h\boldsymbol{\varphi},T,t):=\prod_{m=1}^{M}{\mathcal{S}^{j_m}_N(\mathcal{I}_h\varphi_{m},T,t)},\qquad \mathcal{T}^{\boldsymbol{j}}_N(\boldsymbol{\varphi},T,t):=\prod_{m=1}^{M}{\mathcal{T}^{j_m}_N(\varphi_{m},T,t)}
\end{align}
and we set  
\begin{align}\label{eq:402}
\mathcal{D}(\boldsymbol{j},\boldsymbol{\varphi},T):=\left|\mean{\mathcal{S}^{\boldsymbol{j}}_N(\mathcal{I}_h\boldsymbol{\varphi},T,T)}-\mean{\mathcal{T}^{\boldsymbol{j}}_N(\boldsymbol{\varphi},T,T)}\right|.
\end{align}

In order to show Theorem \ref{main2}, we first provide a series of preliminary results.

\begin{lemma}[First moments]\label{lem:12}
The first moments of the Dean--Kawasaki model in Definition \ref{def:1} agree with those of the Brownian particle system. Namely, for $\varphi\in C^1$, we have $\mean{\mathcal{S}_N(\mathcal{I}_h\varphi,T,T)}=\mean{\mathcal{T}_N(\varphi,T,T)}=0$, where $\mathcal{S}_N$ and $\mathcal{T}_N$ have been defined in \eqref{e:25}.
\end{lemma}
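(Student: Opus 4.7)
The plan is to observe that both $\mathcal{S}_N$ and $\mathcal{T}_N$ are, up to terms whose expectation vanishes, pure martingale increments obtained by a cancellation between the diffusive drift of the underlying dynamics and the time evolution of the test function (which was \emph{chosen} to solve a backwards heat equation precisely for this reason, see the discussion in Block 3 of Subsection~\ref{ideas}). Indeed, the identities \eqref{b:6a}--\eqref{b:6b} -- which are recalled in the excerpt to follow from Lemma~\ref{lem:10} -- already give
\begin{align*}
\mathcal{S}_N(\mathcal{I}_h\varphi,T,T) = (\rho_h(T)-\mathbb{E}[\rho_h(T)],\phi_h^T)_h, \qquad \mathcal{T}_N(\varphi,T,T) = \langle \mu_T^N - \mathbb{E}[\mu_T^N],\phi^T\rangle,
\end{align*}
so the statement follows by taking expectations on both sides. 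To keep the lemma self-contained, however, I would prefer to give a short direct It\^o argument.

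For $\mathcal{T}_N$, I would apply It\^o's formula to $\phi^t(\f{w}_k(t))$ for each $k$: the resulting drift is $(\partial_t\phi^t + \tfrac12 \Delta \phi^t)(\f{w}_k(t))\,\mathrm{d}t$, which vanishes by the backwards heat equation \eqref{eq:28} satisfied by $\phi^t$. Summing over $k$ and dividing by $N$, the only surviving contribution is the stochastic integral $\tfrac{1}{N}\sum_{k=1}^N \int_0^t \nabla\phi^s(\f{w}_k(s))\cdot \mathrm{d}\f{w}_k$, whose expectation is zero; since the initial positions are deterministic (Assumption~\ref{ass:2}), $\langle\mu^N_0,\phi^0\rangle$ is a deterministic number and coincides with its expectation, yielding $\mathbb{E}[\mathcal{T}_N(\varphi,T,T)]=0$.

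For $\mathcal{S}_N$, I would apply It\^o's product rule to $(\rho_h(t),\phi_h^t)_h$ using \eqref{e:3}, which gives
\begin{align*}
\mathrm{d}(\rho_h,\phi_h^t)_h = (\rho_h,\partial_t\phi_h^t)_h\,\mathrm{d}t + \tfrac12(\Delta_h\rho_h,\phi_h^t)_h\,\mathrm{d}t + \mathrm{d}M_t,
\end{align*}
where $M_t$ is the stochastic integral appearing in Definition~\ref{def:1}. Using the symmetry of $\Delta_h$ in $(\cdot,\cdot)_h$ (a consequence of \eqref{e:1}) together with the backwards discrete heat equation \eqref{eq:29}, the two drift terms cancel: $(\rho_h,\partial_t\phi_h^t)_h = -\tfrac12(\rho_h,\Delta_h\phi_h^t)_h = -\tfrac12(\Delta_h\rho_h,\phi_h^t)_h$. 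Taking expectations kills $M_t$, and since $\rho_h(0)=\rho_{0,h}$ is deterministic by Assumption~\ref{ass:2}, we conclude $\mathbb{E}[\mathcal{S}_N(\mathcal{I}_h\varphi,T,T)] = 0$.

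There is no real obstacle here: the proof rests on (i) the commutation of $\Delta_h$ with the discrete inner product via \eqref{e:1} and (ii) the deliberate choice of the backwards heat flows for $\phi^t$ and $\phi_h^t$. The only points that require a brief mention of justification are the deterministic nature of the initial data (used to drop the $t=0$ contribution from the expectation) and the fact that the stochastic integrals involved are genuine martingales, which holds in view of the a~priori bounds on $\rho_h$ (cf.\ Subsection~\ref{ss:exp}) and the smoothness of $\phi^t$, $\phi_h^t$.
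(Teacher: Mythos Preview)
Your proof is correct and follows essentially the same approach as the paper. The paper's proof is a one-line appeal to Lemma~\ref{lem:10}, noting that neither $\mathcal{S}_N$ nor $\mathcal{T}_N$ admits drift; what you have written is precisely the content of Step~1 (equation~\eqref{e:30}) in the proof of that lemma, so you are simply unpacking the cited result rather than arguing differently.
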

\begin{proof}
This follows promptly from Lemma \ref{lem:10}, as neither $\mathcal{S}_N(\mathcal{I}_h\varphi,T,t)$ nor $\mathcal{T}_N(\varphi,T,t)$ admits drift.
\end{proof}

\begin{lemma}[Second moments]\label{thm:1}
Let $\Theta$ be as in \eqref{b:7}.
 Assume the validity of Assumptions  \ref{ass:1}, \ref{ass:2}, \ref{ass:4}, \ref{ass:3}. Fix $\varphi_1,\varphi_2\in C^{3+p+\Theta}$. Let $\rho_h$ be as given in Definition \ref{def:1}. Then 
\begin{align}
& \left|\mean{\mathcal{S}_N(\mathcal{I}_h\varphi_1,T,T)\mathcal{S}_N(\mathcal{I}_h\varphi_2,T,T)}-\mean{\mathcal{T}_N(\varphi_1,T,T)\mathcal{T}_N(\varphi_2,T,T)}\right| \nonumber\\
& \quad \leq N^{-1}T\|\varphi_1\|_{C^{1+\Theta}}\|\varphi_{2}\|_{C^{1+\Theta}}\,\mathcal{E}\!\left(N,h\right)\nonumber\\
& \quad \quad + h^{p+1}N^{-1}\max\{T^{1/2};T\}C(d,\rho_{max},\rho_{min})\|\varphi_1\|_{C^{p+3+\Theta}}\|\varphi_2\|_{C^{p+3+\Theta}},
\end{align}
where $\mathcal{S}_N$ and $\mathcal{T}_N$ have been defined in \eqref{e:25}, and $\mathcal{E}\!\left(N,h\right)$ has been  introduced in \eqref{b:8}.
\end{lemma}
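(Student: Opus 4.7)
\textbf{Proof proposal for Lemma~\ref{thm:1}.}
The plan is to exploit the martingale structure of both $\mathcal{S}_N(\mathcal{I}_h\varphi_i,T,\cdot)$ and $\mathcal{T}_N(\varphi_i,T,\cdot)$ (cf.\ Lemma~\ref{lem:12}) and reduce the comparison of second moments to a comparison of cross-variations, which by the computations of \emph{Block 1} are linear functionals of $\rho_h$ and $\mu^N$ (up to the negative part correction $\rho_h^-$). Specifically, by It\^o's formula combined with the backwards heat equations \eqref{eq:28}--\eqref{eq:29}, the drifts in $\m\mathcal{T}_N(\varphi_i,T,t)$ and $\m\mathcal{S}_N(\mathcal{I}_h\varphi_i,T,t)$ vanish, so It\^o isometry yields
\begin{align*}
\mathbb{E}\big[\mathcal{T}_N(\varphi_1,T,T)\mathcal{T}_N(\varphi_2,T,T)\big]
&= \frac{1}{N}\int_0^T \mathbb{E}\big[\langle \mu_t^N, \nabla\phi_1^t\cdot\nabla\phi_2^t\rangle\big]\,\m t, \\
\mathbb{E}\big[\mathcal{S}_N(\mathcal{I}_h\varphi_1,T,T)\mathcal{S}_N(\mathcal{I}_h\varphi_2,T,T)\big]
&\stackrel{\eqref{e:7}}{=} \frac{1}{N}\int_0^T \mathbb{E}\big[(\rho_h^+(t),\nabla_h\phi_{1,h}^t\cdot\nabla_h\phi_{2,h}^t)_h\big]\,\m t.
\end{align*}

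Next I would split $\rho_h^+=\rho_h+\rho_h^-$ and write the difference of the two expressions above as a sum of three contributions: (i) the negative part contribution $N^{-1}\int_0^T \mathbb{E}[(\rho_h^-(t),\nabla_h\phi_{1,h}^t\cdot\nabla_h\phi_{2,h}^t)_h]\,\m t$; (ii) a ``test function discrepancy'' $N^{-1}\int_0^T \mathbb{E}[(\rho_h(t),\nabla_h\phi_{1,h}^t\cdot\nabla_h\phi_{2,h}^t-\mathcal{I}_h[\nabla\phi_1^t\cdot\nabla\phi_2^t])_h]\,\m t$; and (iii) a ``heat-equation discrepancy'' between $(\mathbb{E}[\rho_h(t)],\mathcal{I}_h[\nabla\phi_1^t\cdot\nabla\phi_2^t])_h$ and $\langle \mathbb{E}[\mu_t^N],\nabla\phi_1^t\cdot\nabla\phi_2^t\rangle$, since $\mathbb{E}[\rho_h]$ solves the discrete heat equation \eqref{e:8} while $\mathbb{E}[\mu_t^N]$ solves the continuous one.

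Term (i) is controlled using the $L^\infty$ estimate $\|\nabla_h\phi_{i,h}^t\|_{h,\infty}\leq \|\varphi_i\|_{W^{1+\Theta,\infty}}$ (cf.\ \eqref{r:4}) together with the stretched-exponential bound for $\rho_h^-$ from Subsection \ref{ss:exp}, yielding the first contribution $N^{-1}T\|\varphi_1\|_{C^{1+\Theta}}\|\varphi_2\|_{C^{1+\Theta}}\mathcal{E}(N,h)$. For term (ii) I would invoke the ``Block 2'' type bound, i.e.\ the estimate
$$
\|\mathcal{I}_h[\nabla\phi_1^t\cdot\nabla\phi_2^t]-\nabla_h\phi_{1,h}^t\cdot\nabla_h\phi_{2,h}^t\|_{h,\infty}\lesssim h^{p+1}\|\varphi_1\|_{C^{p+2+\Theta}}\|\varphi_2\|_{C^{p+2+\Theta}},
$$
(provided by Subsection \ref{ss:approx}) combined with the uniform first-moment bound $\mathbb{E}\|\rho_h(t)\|_h\lesssim C(d,\rho_{min},\rho_{max})$ from Block 4. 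For term (iii) I would use the error estimate \eqref{ErrorEstimateHeatEquation} established in Step~1 of the proof of Proposition~\ref{prop:recursive}, applied with the test function $\nabla\phi_1^t\cdot\nabla\phi_2^t\in C^{p+1}$, which gives the desired $h^{p+1}$ decay with the norm $\|\varphi_1\|_{C^{p+3+\Theta}}\|\varphi_2\|_{C^{p+3+\Theta}}$ (the $+3$ arising from one derivative in $\phi_i^t$ plus $p+2$ derivatives for the heat kernel approximation bound).

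The main obstacle is producing the consistency bound in term (ii): the discrete gradient $\nabla_h$ used in the noise of \eqref{e:3} may differ from $\nabla_{D,h}$, so one cannot simply discretely integrate by parts. Instead one has to exploit Assumption~\ref{ass:1} (operators of order $p+1$) together with the smoothing properties of the discrete backwards heat flow $\phi^t_{i,h}$, which control derivatives of $\phi_{i,h}^t$ uniformly in time in terms of derivatives of $\varphi_i$ (via either the discrete maximum principle when $\Theta=0$, or the discrete Sobolev embedding when $\Theta=d/2+1$). Once these deterministic consistency estimates are in place, collecting the three contributions and using $\max\{T^{1/2};T\}$ to absorb the $T$ prefactor from the integral in time yields the stated bound.
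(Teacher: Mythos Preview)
Your proposal is correct and follows essentially the same approach as the paper's proof: both reduce the second-moment comparison via It\^o isometry to the cross-variation integrals, and then split the difference into exactly the three pieces you identify --- the negative-part term (the paper's $A_3$), the test-function discrepancy $r_h^t:=\nabla_h\phi_{1,h}^t\cdot\nabla_h\phi_{2,h}^t-\mathcal{I}_h\{\nabla\phi_1^t\cdot\nabla\phi_2^t\}$ (the paper's $A_4$), and the discrete-versus-continuous heat equation comparison (the paper's $A_2-B_2$, which is precisely the content of \eqref{ErrorEstimateHeatEquation}). One cosmetic point: the consistency bound actually proved in Lemma~\ref{lem:101} (equation \eqref{eq:49}) is in the discrete $L^2$ norm $\|\cdot\|_h$, not $\|\cdot\|_{h,\infty}$, so the paper pairs it with $\|\rho_h(t)\|_h$ via Cauchy--Schwarz and then uses \eqref{r:2b-a}; this is also where the $T^{1/2}$ in $\max\{T^{1/2};T\}$ enters.
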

\begin{proof}
Set $r_h^t:=\nabla_h\phi_{1,h}^t\cdot\nabla_h\phi_{2,h}^t-\mathcal{I}_h\left\{\nabla\phi_{1}^t\cdot\nabla\phi_{2}^t\right\}$. The It\^o differential formula for $\mathcal{S}_N(\varphi_1,T,t)\mathcal{S}_N(\varphi_2,T,t)$ stated in Lemma \ref{lem:10} gives
\begin{align*}
& \m \mean{\mathcal{S}_N(\mathcal{I}_h\varphi_1,T,t)\mathcal{S}_N(\mathcal{I}_h\varphi_2,T,t)} \\
& \quad = N^{-1}\mean{(\rho^{+}_h(t),\nabla_h\phi_{1,h}^t\cdot\nabla_h\phi^t_{2,h})_h}\m t\\
 & \quad =  N^{-1}\mean{(\rho_h(t),\nabla_h\phi_{1,h}^t\cdot\nabla_h\phi_{2,h}^t)_h}\m t+N^{-1}\mean{(\rho^-_h(t),\nabla_h\phi_{1,h}^t\cdot\nabla_h\phi_{2,h}^t)_h}\m t\\
 & \quad = N^{-1}\mean{\left(\rho_h(t),\mathcal{I}_h\left\{\nabla\phi_{1}^t\cdot\nabla\phi_{2}^t\right\}\right)_h}\m t + N^{-1}\mean{(\rho^-_h(t),\nabla_h\phi_{1,h}^t\cdot\nabla_h\phi_{2,h}^t)_h}\m t \\
 & \quad \quad + N^{-1}\mean{(\rho_h(t),r_{h}^t)_h}\m t\\
 & \quad  = N^{-1}\mean{\left\{\left(\rho_h(t),\mathcal{I}_h\left\{\nabla\phi_{1}^t\cdot\nabla\phi_{2}^t\right\}\right)_h-\left(\rho_h(0),\mathcal{P}^t_h\left(\mathcal{I}_h\left\{\nabla\phi_{1}^t\cdot\nabla\phi_{2}^t\right\}\right)\right)_h\right\}}\m t \\
 &\quad \quad + N^{-1}\mean{\left(\rho_h(0),\mathcal{P}^t_h\left(\mathcal{I}_h\left\{\nabla\phi_{1}^t\cdot\nabla\phi_{2}^t\right\}\right)\right)_h}\m t \\
 & \quad \quad +N^{-1}\mean{(\rho^-_h(t),\nabla_h\phi_{1,h}^t\cdot\nabla_h\phi_{2,h}^t)_h}\m t + N^{-1}\mean{(\rho_h(t),r_{h}^t)_h}\m t =: \sum_{i=1}^{4}{A_i}\,\m t,
 \end{align*}
where $\mathcal{P}_h^{\cdot}$ is the solution operator for the discrete backwards heat equation, see Subsection \ref{ss:not}. On the other hand Lemma \ref{lem:10} also implies
\begin{align*}
& \m\mean{\mathcal{T}_N(\varphi_1,T,t)\mathcal{T}_N(\varphi_2,T,t)} \\
& \quad = N^{-1}\mean{\frac{1}{N}\sum_{k=1}^{N}{\nabla\phi_1^t(\f{w}_k(t))\cdot\nabla\phi_2^t(\f{w}_k(t))}}\m t\\
& \quad = N^{-1}\mean{\left(\frac{1}{N}\sum_{k=1}^{N}{\nabla\phi_1^t(\f{w}_k(t))\cdot\nabla\phi_2^t(\f{w}_k(t))}-\frac{1}{N}\sum_{k=1}^{N}{\mathcal{P}^t(\nabla\phi_1^t\cdot\nabla\phi_2^t)(\f{w}_k(0))}\right)}\m t\\
& \quad \quad + N^{-1}\mean{\frac{1}{N}\sum_{k=1}^{N}{\mathcal{P}^t(\nabla\phi_1^t\cdot\nabla\phi_2^t)(\f{w}_k(0))}}\m t =: \sum_{i=1}^{2}{B_i\m t},
\end{align*}
where $\mathcal{P}^{\cdot}$ is the solution operator for the backwards heat equation, see Subsection \ref{ss:not}.
We get $A_1=B_1$ since the first (centred) moments agree (see Lemma \ref{lem:12}). Furthermore, \eqref{r:3-a} and \eqref{eq:202} grant
\begin{align*}
|A_3|\m t & \leq N^{-1}\|\nabla_h\phi_{1,h}^t\|_{\infty}\|\nabla_h\phi_{2,h}^t\|_{\infty}\mathcal{E}\!\left(N,h\right)\m t\\
&  \leq N^{-1}\|\nabla_h\phi_{1,h}^t\|_{\infty}\|\nabla_h\phi_{2,h}^t\|_{\infty}\mathcal{E}\!\left(N,h\right)\m t \leq N^{-1}\|\varphi_1\|_{C^{1+\Theta}}\|\varphi_{2}\|_{C^{1+\Theta}}\mathcal{E}\!\left(N,h\right)\m t.
\end{align*}
The bounds \eqref{eq:49} and \eqref{eq:202} promptly give 
\begin{align*}
\int_{0}^{T}{|A_4|\m t} & \leq N^{-1}\sup_{t\in[0,T]}{\|r_h^t\|_h} \int_{0}^{T}{\mean{\|\rho_h(t)\|_{h}}\m t}\\
& \leq CN^{-1}\sup_{t\in[0,T]}{\|r^t_h\|_h}\, T^{1/2}\left(\int_{0}^{T}{\mean{\|\rho_h(t)\|_h^2}\m t}\right)^{1/2}\\
& \stackrel{\mathclap{\eqref{eq:49}}}{\leq} Ch^{p+1}N^{-1}\|\varphi_1\|_{C^{p+2+\Theta}}\|\varphi_2\|_{C^{p+2+\Theta}}T^{1/2}\left(\int_{0}^{T}{\mean{\|\rho_h(t)\|_h^2}\m t}\right)^{1/2}\\
& \stackrel{\mathclap{\eqref{eq:202}\eqref{r:2b-a}}}{\leq} \,\,\,\quad h^{p+1}N^{-1}\max\{T^{1/2};T\}C(d,\rho_{max},\rho_{min})\|\varphi_1\|_{C^{p+2+\Theta}}\|\varphi_2\|_{C^{p+2+\Theta}}.
\end{align*}
We decompose $A_2-B_2$ as follows
\begin{align}\label{eq:808}
A_2-B_2& = N^{-1}\mean{\left(\rho_h(0),\mathcal{P}^t_h\left(\mathcal{I}_h\left\{\nabla\phi_{1}^t\cdot\nabla\phi_{2}^t\right\}\right)\right)_h}\m t \nonumber\\
& \quad -N^{-1}\mean{\frac{1}{N}\sum_{k=1}^{N}{\mathcal{P}^t(\nabla\phi_1^t\cdot\nabla\phi_2^t)(\f{w}_k(0))}}\m t \nonumber\\
& = -N^{-1}\left\{\mean{\frac{1}{N}\sum_{k=1}^{N}{\mathcal{P}^t(\nabla\phi_1^t\cdot\nabla\phi_2^t)(\f{w}_k(0))}}\m t\right.\nonumber\\
& \quad \quad \left. - \left(\rho_h(0),\mathcal{I}_h[\mathcal{P}^t(\nabla\phi_1^t\cdot\nabla\phi_2^t)]\right)_h \m t\right\}\nonumber\\
& \quad + N^{-1}\left(\rho_h(0),\mathcal{P}^t_h\left(\mathcal{I}_h\left\{\nabla\phi_{1}^t\cdot\nabla\phi_{2}^t\right\}\right)-\mathcal{I}_h[\mathcal{P}^t(\nabla\phi_1^t\cdot\nabla\phi_2^t)]\right)_h\m t \nonumber\\
& =:C_1+C_2,
\end{align}
where we have also used that $\rho_h(0)$ is deterministic.
The term $C_1$ is bounded using \eqref{eq:401a} applied to the function $\eta:=\mathcal{P}^t(\nabla\phi_1^t\cdot\nabla\phi_2^t)$, while the term $C_2$ is dealt with using \eqref{eq:325a} with choice $\varphi:=\nabla\phi_1^t\cdot\nabla\phi_2^t$. All together, we obtain the bound
\begin{align}\label{eq:51}
|A_2-B_2| 
& \leq Ch^{p+1}N^{-1}(C+\rho_{max})\|\varphi_1\|_{C^{p+3}}\|\varphi_2\|_{C^{p+3}}\m t.
\end{align}
The proof is complete.
\end{proof}

\begin{prop}[Recursive formula for higher moments]\label{thm:2}
Let $\Theta$ be as in \eqref{b:7}. 
Fix $\boldsymbol{\varphi}=(\varphi_1,\dots,\varphi_K)\in\left[C^{p+3+\Theta}\right]^M$, a vector $\boldsymbol{j}=(j_1,\dots,j_M)$ such that $|\boldsymbol{j}|_1=j$. For each pair $i,j\in\{1,\dots,M\}$, let $\boldsymbol{j}^{ij}$ be as defined in Lemma \ref{lem:10}.
Let $\mathcal{E}\!\left(N,h\right)$ be as defined in \eqref{b:8}. Assume the validity of Assumptions  \ref{ass:1}, \ref{ass:2}, \ref{ass:4}, \ref{ass:3}. 
We recall the abbreviation for the difference of moments (see \eqref{e:25}, \eqref{eq:402})
\begin{align*}
\mathcal{D}(\boldsymbol{j},\boldsymbol{\varphi},T) & := \left|\mean{\mathcal{S}^{\boldsymbol{j}}_N(\mathcal{I}_h\boldsymbol{\varphi},T,T)}-\mean{\mathcal{T}^{\boldsymbol{j}}_N(\boldsymbol{\varphi},T,T)}\right|\\
& = \left|\mean{\prod_{m=1}^{M}{\left\{(\rho_h(T),\phi_{m,h}^T)_h-(\rho_h(0),\phi_{m,h}^0)_h\right\}^{j_m}}}\right.\\
& \quad \left.-\mean{\prod_{m=1}^{M}{\left\{\frac{1}{N}\sum_{k=1}^{N}{\phi_{m}^T(\f{w}_k(T))}-\frac{1}{N}\sum_{k=1}^{N}{\phi_{m}^0(\f{w}_k(0))}\right\}^{j_m}}}\right|.
\end{align*}
Then we have the recursive formula
\begin{align}\label{eq:54}
\mathcal{D}(\boldsymbol{j},\boldsymbol{\varphi},T) & \leq N^{-1}\sum_{k,l=1}^{M}{\frac{(j_k-\delta_{kl})j_l}{2}}\int_{0}^{T}{\mathcal{D}(\{\boldsymbol{j}^{kl};1\},\{\boldsymbol{\phi}^t;\nabla\phi_k^t\cdot \nabla\phi_l^t\},t)\m t} \nonumber\\
& \quad\quad + N^{-1}\rho_{max}\sum_{k,l=1}^{M}{\frac{(j_k-\delta_{kl})j_l}{2}}\int_{0}^{T}{\mathcal{D}(\boldsymbol{j}^{kl},\boldsymbol{\phi}^t,t)\|\varphi_k\|_{C^{1+\Theta}}\|\varphi_l\|_{C^{1+\Theta}}\m t}\nonumber\\
& \quad\quad + \left\{CN^{-1} TC(d,\rho_{max},\rho_{min}) \right\}^{j/2}(2j)^{3(j-2)}\mathcal{E}\!\left(N,h\right)\nonumber\\
& \quad \quad \quad \quad\quad \times\left(\sum_{k,l=1}^{M}{\frac{(j_k-\delta_{kl})j_l}{2}}\right)\left(\prod_{m=1}^{M}{\|\varphi_m\|_{C^{1+\Theta}}^{j_m}}\right)\nonumber\\
& \quad\quad + h^{p+1}\left\{CN^{-1} \max\{T^{1/2};T\}C(d,\rho_{max},\rho_{min}) \right\}^{j/2}(2j)^{3(j-2)}\nonumber\\
& \quad\quad\quad\quad\quad \times \left(\sum_{k,l=1}^{M}{\frac{(j_k-\delta_{kl})j_l}{2}}\right)\left(\prod_{m=1}^{M}{\|\varphi_m\|_{C^{p+3+\Theta}}^{j_m}}\right)\nonumber\\
& \quad =: A^{j-1}_{recursion}+A^{j-2}_{recursion} +\mbox{Err}_{neg} + \mbox{Err}_{num}.
\end{align}
\end{prop}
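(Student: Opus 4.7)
The strategy mirrors that of Lemma~\ref{thm:1}, now applied to the polynomial observable $(\boldsymbol{x})\mapsto\prod_{m=1}^M x_m^{j_m}$ evaluated at the vector-valued martingale with components $\mathcal{S}_N(\mathcal{I}_h\varphi_m,T,\cdot)$ (respectively $\mathcal{T}_N(\varphi_m,T,\cdot)$). By the It\^o calculus of Lemma~\ref{lem:10}, both of these are driftless martingales, thanks to the choice of the backward (discrete) heat equations \eqref{eq:28}, \eqref{eq:29} for $\phi_m^t$ and $\phi_{m,h}^t$. Applying the multivariate It\^o formula and taking expectations annihilates all martingale and drift terms, leaving only the quadratic cross-variation sum
\[
\tfrac{\mathrm{d}}{\mathrm{d}t}\mathbb{E}\bigl[\mathcal{S}_N^{\boldsymbol{j}}\bigr]
= N^{-1}\sum_{k,l=1}^M\frac{(j_k-\delta_{kl})j_l}{2}\,\mathbb{E}\Bigl[\mathcal{S}_N^{\boldsymbol{j}^{kl}}\bigl(\rho_h^{+}(t),\nabla_h\phi_{k,h}^t\cdot\nabla_h\phi_{l,h}^t\bigr)_h\Bigr],
\]
together with its particle-system analogue in which the bracket is replaced by $\langle\mu_t^N,\nabla\phi_k^t\cdot\nabla\phi_l^t\rangle$; the combinatorial prefactor is correct because the pairs $(k,l)$ and $(l,k)$ are both summed. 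Integrating in $t$ from $0$ to $T$ and subtracting the two identities gives the starting point for the recursion.

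For each fixed pair $(k,l)$ I would then adopt the three-term split already used in the second-moment proof,
\[
\bigl(\rho_h^{+},\nabla_h\phi_{k,h}^t\cdot\nabla_h\phi_{l,h}^t\bigr)_h
=\bigl(\rho_h,\mathcal{I}_h\{\nabla\phi_k^t\cdot\nabla\phi_l^t\}\bigr)_h+(\rho_h,r_h^t)_h+\bigl(\rho_h^{-},\nabla_h\phi_{k,h}^t\cdot\nabla_h\phi_{l,h}^t\bigr)_h,
\]
with $r_h^t:=\nabla_h\phi_{k,h}^t\cdot\nabla_h\phi_{l,h}^t-\mathcal{I}_h\{\nabla\phi_k^t\cdot\nabla\phi_l^t\}$. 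Using \eqref{b:6a} the first summand is rewritten as $\mathcal{S}_N(\nabla\phi_k^t\cdot\nabla\phi_l^t,t,t)+(\rho_h(0),\mathcal{P}_h^t\mathcal{I}_h\{\nabla\phi_k^t\cdot\nabla\phi_l^t\})_h$, and analogously via \eqref{b:6b} the particle-system bracket decomposes as $\mathcal{T}_N(\nabla\phi_k^t\cdot\nabla\phi_l^t,t,t)+N^{-1}\sum_n\mathcal{P}^t(\nabla\phi_k^t\cdot\nabla\phi_l^t)(\boldsymbol{w}_n(0))$. The martingale pieces, once multiplied by the remaining product $\mathcal{S}_N^{\boldsymbol{j}^{kl}}$ (respectively $\mathcal{T}_N^{\boldsymbol{j}^{kl}}$) and differenced across the two models, are precisely $\mathcal{D}(\{\boldsymbol{j}^{kl};1\},\{\boldsymbol{\phi}^t;\nabla\phi_k^t\cdot\nabla\phi_l^t\},t)$, yielding $A^{j-1}_{recursion}$. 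The deterministic piece factors out of the expectation, and bounded by $\rho_{max}\|\varphi_k\|_{C^{1+\Theta}}\|\varphi_l\|_{C^{1+\Theta}}$ via the maximum principle (if $\Theta=0$) or Sobolev embedding (if $\Theta=d/2+1$) for the backward (discrete) heat semigroup, it multiplies the difference $\mathbb{E}[\mathcal{S}_N^{\boldsymbol{j}^{kl}}]-\mathbb{E}[\mathcal{T}_N^{\boldsymbol{j}^{kl}}]=\pm\mathcal{D}(\boldsymbol{j}^{kl},\boldsymbol{\phi}^t,t)$ and produces $A^{j-2}_{recursion}$.

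The two remaining contributions do not enter the recursion and are estimated outright. For the $\rho_h^{-}$ piece I would combine H\"older's inequality, the centered-moment bound of Block~4 (i.e.\ \eqref{r:2b-a}) on the surviving $\mathcal{S}_N^{\boldsymbol{j}^{kl}}$ factor, and the stretched-exponential estimate \eqref{r:3-a} on $\|\rho_h^{-}\|_h$ (which under Assumption~\ref{ass:3} collapses to $\mathcal{E}(N,h)$); this produces the $\mathrm{Err}_{neg}$ term, whose $\{CN^{-1}TC(d,\rho_{max},\rho_{min})\}^{j/2}$ prefactor combines the outer $N^{-1}T$ from It\^o with the $(j-2)/2$-power coming from the $(j-2)$-fold centered-moment bound. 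The $(\rho_h,r_h^t)$ piece and the discrepancy between the deterministic parts of the two models (the higher-moment analogue of \eqref{eq:808}) are controlled by the interpolation estimate \eqref{eq:49} on $r_h^t$, the discrete semigroup approximation \eqref{eq:325a}, and the initial-data approximation \eqref{eq:401a}, supplying the $h^{p+1}$ factor and the stronger $C^{p+3+\Theta}$ norm in $\mathrm{Err}_{num}$. The principal obstacle is the sharp bookkeeping of the $j$-dependence: one has to verify that the combinatorial factor $\frac{(j_k-\delta_{kl})j_l}{2}$ from It\^o's formula for monomials, the decrement $\boldsymbol{j}\mapsto\boldsymbol{j}^{kl}$, and the $(j-2)^{3(j-2)}$-type constant from the Block~4 moment bound combine to produce exactly the $(2j)^{3(j-2)}$ prefactor and the advertised norms $\prod_m\|\varphi_m\|^{j_m}$ in \eqref{eq:54}, with no unintended loss in the exponents.
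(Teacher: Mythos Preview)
Your proposal is correct and follows essentially the same approach as the paper: the It\^o expansion from Lemma~\ref{lem:10}, the three-term split of $(\rho_h^+,\nabla_h\phi_{k,h}^t\cdot\nabla_h\phi_{l,h}^t)_h$, the add-and-subtract of the deterministic initial piece $(\rho_h(0),\mathcal{P}_h^t(\mathcal{I}_h\{\nabla\phi_k^t\cdot\nabla\phi_l^t\}))_h$ to separate the $(j-1)$-recursion from the $(j-2)$-recursion, and the treatment of the residuals via \eqref{eq:49}, \eqref{eq:325a}, \eqref{eq:401a}, \eqref{r:3-a} all match the paper's argument. One small correction: the centred-moment bound you invoke on $\mathcal{S}_N^{\boldsymbol{j}^{kl}}$ (and the corresponding $\mathcal{T}_N^{\boldsymbol{j}^{kl}}$ for the $T_2$-analogue) is not \eqref{r:2b-a} but rather \eqref{eq:40}--\eqref{eq:41} from Corollary~\ref{cor:1}; \eqref{r:2b-a} controls pointwise moments of $\rho_h$ and is used only inside the $A_4$ estimate to bound $\mathbb{E}[\|\rho_h\|_h^2]$.
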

\begin{proof}
We use Lemma \ref{lem:10} to deduce
\begin{align*} 
& \m\mean{\mathcal{S}^{\boldsymbol{j}}_N(\mathcal{I}_h\boldsymbol{\varphi},T,t)} \\
& \quad = N^{-1}\mean{\sum_{k,l=1}^{M}{\frac{(j_k-\delta_{kl})j_l}{2}\mathcal{S}^{\boldsymbol{j}^{kl}}_N(\mathcal{I}_h\boldsymbol{\varphi},T,t)\left(\rho^{+}_h(t),\nabla_h\phi_{k,h}^t\cdot\nabla_h\phi_{l,h}^t\right)_h}}\m t\\
& \quad = N^{-1}\mean{\sum_{k,l=1}^{M}{\frac{(j_k-\delta_{kl})j_l}{2}\mathcal{S}^{\boldsymbol{j}^{kl}}_N(\mathcal{I}_h\boldsymbol{\varphi},T,t)\left(\rho_h(t),\nabla_h\phi_{k,h}^t\cdot\nabla_h\phi_{l,h}^t\right)_h}}\m t\\
& \quad\quad + N^{-1}\mean{\sum_{k,l=1}^{M}{\frac{(j_k-\delta_{kl})j_l}{2}\mathcal{S}^{\boldsymbol{j}^{kl}}_N(\mathcal{I}_h\boldsymbol{\varphi},T,t)\left(\rho^{-}_h(t),\nabla_h\phi_{k,h}^t\cdot\nabla_h\phi_{l,h}^t\right)_h}}\m t.
\end{align*}
In analogy to the notation of Lemma \ref{thm:1}, we define 
$$
r_{k,l,h}^t:=\nabla_h\phi_{k,h}^t\cdot\nabla_h\phi_{l,h}^t-\mathcal{I}_h\left\{\nabla\phi_{k}^t\cdot\nabla\phi_{l}^t\right\}.
$$
Let $\mathcal{P}^{\cdot}$ (respectively, $\mathcal{P}_h^{\cdot}$) be the solution operator for the backwards heat equation (respectively, for the discrete backwards heat equation), see Subsection \ref{ss:not}. 
We then proceed above as
\begin{align*}
& \m\mean{\mathcal{S}^{\boldsymbol{j}}_N(\mathcal{I}_h\boldsymbol{\varphi},T,t)} \\
& \quad = N^{-1} \mathbb{E}\left[\sum_{k,l=1}^{M}{\frac{(j_k-\delta_{kl})j_l}{2}\mathcal{S}^{\boldsymbol{j}^{kl}}_N(\mathcal{I}_h\boldsymbol{\varphi},T,t)\left(\rho_h(t),\mathcal{I}_h\left\{\nabla\phi_{k}^t\cdot\nabla\phi_{l}^t\right\}\right)_h}\right.\\
& \quad \quad \quad \left.-\left(\rho_h(0),\mathcal{P}_h^t\left(\mathcal{I}_h\left\{\nabla\phi_{k}^t\cdot\nabla\phi_{l}^t\right\}\right)\right)_h\right]\m t\\
&\quad\quad + N^{-1} \mean{\sum_{k,l=1}^{M}{\frac{(j_k-\delta_{kl})j_l}{2}\mathcal{S}^{\boldsymbol{j}^{kl}}_N(\mathcal{I}_h\boldsymbol{\varphi},T,t)\left(\rho_h(0),\mathcal{P}_h^t\left(\mathcal{I}_h\left\{\nabla\phi_{k}^t\cdot\nabla\phi_{l}^t\right\}\right)\right)_h}}\m t\\
&\quad \quad + N^{-1}\mean{\sum_{k,l=1}^{M}{\frac{(j_k-\delta_{kl})j_l}{2}\mathcal{S}^{\boldsymbol{j}^{kl}}_N(\mathcal{I}_h\boldsymbol{\varphi},T,t)\left(\rho^{-}_h(t),\nabla_h\phi_{k,h}^t\cdot\nabla_h\phi_{l,h}^t\right)_h}}\m t\\
& \quad\quad + N^{-1} \mean{\sum_{k,l=1}^{M}{\frac{(j_k-\delta_{kl})j_l}{2}\mathcal{S}^{\boldsymbol{j}^{kl}}_N(\mathcal{I}_h\boldsymbol{\varphi},T,t)(\rho_h(t),r_{k,l,h}^t})_h}\m t =: \sum_{i=1}^{4}{A_i\m t}.
\end{align*}
On the other hand
\begin{align*}
& \m\mean{\mathcal{T}^{\boldsymbol{j}}_N(\boldsymbol{\varphi},T,t)}\\ 
& \quad = N^{-1}\mean{\sum_{k,l=1}^{M}{\frac{(j_k-\delta_{kl})j_l}{2}\mathcal{T}^{\boldsymbol{j}^{kl}}_N(\boldsymbol{\varphi},T,t)\left(\frac{1}{N}\sum_{r=1}^{N}{\nabla\phi_{k}^t(\f{w}_r(t))\cdot\nabla\phi_{l}^t(\f{w}_r(t))}\right)}}\m t\\
&  \quad  = N^{-1}\mathbb{E}\!\left[{\sum_{k,l=1}^{M}{\frac{(j_k-\delta_{kl})j_l}{2}\mathcal{T}^{\boldsymbol{j}^{kl}}_N(\boldsymbol{\varphi},T,t)}}\right.\\
&  \quad\quad \quad \quad \left.\times\left(\frac{1}{N}\sum_{r=1}^{N}{\nabla\phi_k^t(\f{w}_r(t))\cdot\nabla\phi_{l}^t(\f{w}_r(t))}-\frac{1}{N}\sum_{r=1}^{N}{\mathcal{P}^t\left\{\nabla\phi_k^t\cdot\nabla\phi_l^t\right\}(\f{w}_r(0))}\right)\right]\m t\\
& \quad  \quad + N^{-1}\mean{\sum_{k,l=1}^{M}{\frac{(j_k-\delta_{kl})j_l}{2}\mathcal{T}^{\boldsymbol{j}^{kl}}_N(\boldsymbol{\varphi},T,t)\left(\frac{1}{N}\sum_{r=1}^{N}{\mathcal{P}^t\left\{\nabla\phi_{k}^t\cdot\nabla\phi_{l}^t\right\}(\f{w}_r(0))}\right)}}\m t\\
&  \quad=: \sum_{i=1}^{2}{B_i\m t}.
\end{align*}
It is straightforward to notice that $A_1-B_1$ can be settled using the estimates for the moments of order $j-1$, as (for each pair $k,l$) the exponent vector $\f{j}$ is decreased by two units to $\f{j}^{kl}$, while the additional test function $\nabla\phi^t_k\cdot\nabla\phi^t_l$ is picked up. The bound for $A_3$ relies on the Cauchy-Schwartz inequality, Corollary \ref{cor:1}, \eqref{eq:202}, and \eqref{r:3-a}. It reads
\begin{align*}
|A_3| 
&\stackrel{\mathclap{\eqref{eq:40}}}{\leq}  \sum_{k,l=1}^{M}{\frac{(j_k-\delta_{kl})j_l}{2}N^{-1}\|\nabla_h\phi_{k,h}^t\|_{\infty}\|\nabla_h\phi_{l,h}^t\|_{\infty}C\mathcal{E}\!\left(N,h\right)}\\
& \quad\quad\quad\quad\times \left[ \left\{2N^{-1} TC\left(d,\rho_{max},\rho_{min}\right) \right\}^{(2j-4)/2}(2j-4)^{3(2j-4)} \right]^{1/2}\\
& \quad\quad\quad \quad \times\left[\prod_{m=1}^{M}{\|\varphi_m\|_{C^{1+\Theta}}^{j_m-\delta_{km}-\delta_{lm}}}\right]\m t\\
& \quad\leq \left(\prod_{m=1}^{M}{\|\varphi_m\|_{C^{1+\Theta}}^{j_m}}\right) \left(\sum_{k,l=1}^{M}{\frac{(j_k-\delta_{kl})j_l}{2}}\right)N^{-1}C\mathcal{E}\!\left(N,h\right)\\
& \quad\quad\quad\quad\times \left\{N^{-1} TC\left(d,\rho_{max},\rho_{min}\right) \right\}^{(j-2)/2}(2j)^{3(j-2)}\m t\\
& \quad\stackrel{\mathclap{\eqref{eq:202}}}{\leq}  T^{j/2-1}\left\{N^{-1} C\left(d,\rho_{max},\rho_{min}\right) \right\}^{j/2}(2j)^{3(j-2)}\mathcal{E}\!\left(N,h\right)\\
& \quad\quad\quad\quad\times \left(\prod_{m=1}^{M}{\|\varphi_m\|_{C^{1+\Theta}}^{j_m}}\right) \left(\sum_{k,l=1}^{M}{\frac{(j_k-\delta_{kl})j_l}{2}}\right)\m t.
\end{align*}
The term $A_4$ may be bounded as follows 
\begin{align*}
 \int_{0}^{T}{|A_4|\m t} & \leq N^{-1}\sum_{k,l=1}^{M}{\frac{(j_k-\delta_{kl})j_l}{2}\int_{0}^{T}{\mean{\left|\mathcal{S}^{\boldsymbol{j}^{kl}}_N(\mathcal{I}_h\boldsymbol{\varphi},T,t)\right|\|\rho_h(t)\|_{h}\|r_{k,l,h}^t\|_{h}}}\m t}\\
&  \stackrel{\mathclap{\eqref{eq:49}}}{\leq} \,\, Ch^{p+1}N^{-1}\sum_{k,l=1}^{M}{\|\varphi_k\|_{C^{2+p+\Theta}}\|\varphi_l\|_{C^{2+p+\Theta}}\frac{(j_k-\delta_{kl})j_l}{2}}\\
&  \quad\quad\times \left(\max_{t\in[0,T]}{\mean{\left|\mathcal{S}^{\boldsymbol{j}^{kl}}_N(\mathcal{I}_h\boldsymbol{\varphi},T,t)\right|^2}^{1/2}}\right)T^{1/2}\left(\int_{0}^{T}{\mean{\|\rho_h(t)\|_h^2}\m t}\right)^{1/2}\\
&  \stackrel{\mathclap{\eqref{eq:202}\eqref{r:2b-a}\eqref{eq:40}}}{\leq} \,\,\,\,\,\quad Ch^{p+1}N^{-1}\sum_{k,l=1}^{M}{\|\varphi_k\|_{C^{2+p+\Theta}}\|\varphi_l\|_{C^{2+p+\Theta}}\frac{(j_k-\delta_{kl})j_l}{2}}\\
& \quad\quad \times C(d,\rho_{max},\rho_{min})\max\{T^{1/2};T\}\\
&  \quad\quad \times \left\{CN^{-1} TC(d,\rho_{max},\rho_{min}) \right\}^{(j-2)/2}(2j)^{3(j-2)}\left(\prod_{m=1}^{M}{\|\varphi_m\|_{C^{1+\Theta}}^{j_m-\delta_{km}-\delta_{lm}}}\right)\\
&  \leq Ch^{p+1}\left\{CN^{-1}\max\{T^{1/2};T\}C(d,\rho_{max},\rho_{min})\right\}^{j/2}\\
&  \quad\quad \times (2j)^{3(j-2)}\left(\sum_{k,l=1}^{M}{\frac{(j_k-\delta_{kl})j_l}{2}}\right)\left(\prod_{m=1}^{M}{\|\varphi_m\|_{C^{2+p+\Theta}}^{j_m}}\right).
\end{align*}
The difference $A_2-B_2$ is rewritten as 
\begin{align}
& A_2-B_2 \nonumber\\
&  \quad = N^{-1} \mathbb{E}\left[\sum_{k,l=1}^{M}{\frac{(j_k-\delta_{kl})j_l}{2}\left[\mathcal{S}^{\boldsymbol{j}^{kl}}_N(\mathcal{I}_h\boldsymbol{\varphi},T,t)-\mathcal{T}^{\boldsymbol{j}^{kl}}_N(\boldsymbol{\varphi},T,t)\right]}\right.\nonumber\\
& \quad\quad\quad\quad \times\left.\left(\rho_h(0),\mathcal{P}_h^t\left(\mathcal{I}_h\left\{\nabla\phi_{k}^t\cdot\nabla\phi_{l}^t\right\}\right)\right)_h\right]\m t\nonumber\\
&  \quad\quad - N^{-1}\mathbb{E}\left[\sum_{i,j=1}^{K}{\frac{(j_i-\delta_{ij})j_j}{2}T^{\boldsymbol{j}^{ij}}_N(\boldsymbol{\varphi},t,s)}\right.\nonumber\\
&  \quad\quad\quad \quad\times\left.\left(\frac{1}{N}\sum_{r=1}^{N}{\mathcal{P}^t\left\{\nabla\phi_k^t\cdot\nabla\phi_{l}^t\right\}(\f{w}_r(0))}-\left(\rho_h(0),\mathcal{P}^t_h\left(\mathcal{I}_h\left\{\nabla\phi_{k}^t\cdot\nabla\phi_{l}^t\right\}\right)\right)_h\right)\right]\m t\nonumber \\
&  \quad= N^{-1} \sum_{k,l=1}^{M}{\frac{(j_k-\delta_{kl})j_l}{2}\left(\mean{\mathcal{S}^{\boldsymbol{j}^{kl}}_N(\mathcal{I}_h\boldsymbol{\varphi},T,t)}-\mean{\mathcal{T}^{\boldsymbol{j}^{kl}}_N(\boldsymbol{\varphi},T,t)}\right)}\nonumber\\
& \quad\quad\quad\quad \times\left(\rho_h(0),\mathcal{P}_h^t\left(\mathcal{I}_h\left\{\nabla\phi_{k}^t\cdot\nabla\phi_{l}^t\right\}\right)\right)_h\m t\nonumber\\
& \quad \quad - N^{-1}\mathbb{E}\left[\sum_{k,l=1}^{M}{\frac{(j_k-\delta_{kl})j_l}{2}\mathcal{T}^{\boldsymbol{j}^{kl}}_N(\boldsymbol{\varphi},T,t)}\right.\nonumber\\
&  \quad\quad\quad \quad\times\left.\left(\frac{1}{N}\sum_{r=1}^{N}{\mathcal{P}^t\left\{\nabla\phi_{k}^t\cdot\nabla\phi_{l}^t\right\}(\f{w}_r(0))}-\left(\rho_h(0),\mathcal{P}^t_h\left(\mathcal{I}_h\left\{\nabla\phi_{k}^t\cdot\nabla\phi_{l}^t\right\}\right)\right)_h\right)\right]\m t\label{e:35}\\
& =:T_1+T_2\nonumber,
\end{align}
where equality \eqref{e:35} is valid because the term 
$$
\left(\rho_h(0),\mathcal{P}_h^t\left(\mathcal{I}_h\left\{\nabla\phi_{k}^t\cdot\nabla\phi_{l}^t\right\}\right)\right)_h
$$ 
is deterministic. The term $T_1$ is dealt with using the estimates of order $j-2$ (as, for each $k,l$, the exponent vector is decreased by two units to $\f{j}^{kl}$). The term $T_2$ is settled with the same arguments as for term $C_2$ in \eqref{eq:808}, with the additional use of the H\"older inequality and of \eqref{eq:41}. We obtain 
\begin{align*}
|T_2| & \leq N^{-1}\sum_{k,l=1}^{M}{\left[\frac{(j_k-\delta_{kl})j_l}{2}\right.} \left\{CN^{-1} T \right\}^{(j-2)/2}j^{j-2}\prod_{m=1}^{M}{\|\nabla\varphi_m\|_{\infty}^{j_m-\delta_{km}-\delta_{lm}}}\\
& \quad\quad \left.\times \left\{h^{p+1}C(d,\rho_{max},\rho_{min})\|\varphi_i\|_{C^{p+3}}\|\varphi_j\|_{C^{p+3}}\right\}\right]\m t\\
& \leq h^{p+1}T^{j/2-1}\left\{CN^{-1} C(d,\rho_{max},\rho_{min}) \right\}^{j/2}j^{j-2}\\
& \quad\quad \times\left(\prod_{m=1}^{M}{\|\varphi_m\|_{C^{p+3}}^{j_m}}\right)\left(\sum_{k,l=1}^{M}{\frac{(j_k-\delta_{kl})j_l}{2}}\right)\m t.
\end{align*}
Putting together all the estimates and integrating in time gives \eqref{eq:54}.
\end{proof}
\begin{rem}
The finite-difference error in \eqref{eq:54} accounts for two different errors:
\begin{itemize}[leftmargin=0.9 cm]
\item the difference between the initial conditions $\rho_{h,0}$ and the empirical density $\mu^N_0$, as well as the difference between the solutions to continuous and discrete backwards heat equations. This is captured in the term $A_2-B_2$ for the second order moment, and in the term $T_2$ for higher moments.
\item the difference between $\mathcal{I}_h(\nabla\phi_k^t\cdot\nabla\phi_l^t)$ and $\nabla_{h}\phi_{k,h}^t\cdot\nabla_h\phi_{l,h}^t$. As anticipated in Subsection \ref{ideas}, \emph{Block 3}, the high-order accuracy of the difference between the solutions to continuous and discrete backwards heat equations relies on the discrete final datum to be the interpolant of the continuous final datum. Since $\nabla_{h}\phi_{k,h}^t\cdot\nabla_h\phi_{l,h}^t$ does not interpolate $\nabla\phi_k^t\cdot\nabla\phi_l^t$ in general, we quantify $\mathcal{I}_h(\nabla\phi_k^t\cdot\nabla\phi_l^t)-\nabla_{h}\phi_{k,h}^t\cdot\nabla_h\phi_{l,h}^t$.
\end{itemize}
\end{rem}

\subsection{Proof of Theorem \ref{main2}}\label{ss:end}

\emph{Step 1: Interpreting \eqref{eq:54}}. The recursive relation \eqref{eq:54} may be visualised in the following way:
\begin{enumerate}[leftmargin=0.9 cm]
\item[i)] Each moment of order $j$ produces residuals $\mbox{Err}_{neg}$ and $\mbox{Err}_{num}$.
\item[ii)] Each moment of order $j$ is linked recursively to a collection of moments of order $j-1$ ($A^{j-1}_{recursion}$) and a collection of moments of order $j-2$ ($A^{j-2}_{recursion}$).
\item[iii)] The overall bound for $\mathcal{D}(\boldsymbol{j},\boldsymbol{\varphi},T)$ is given by summing all the residuals for all moments found by exhausting the recursive relation. More specifically, it holds 
$$
\mathcal{D}(\boldsymbol{j},\boldsymbol{\varphi},T) \leq \sum_{K=0}^{j-2}\mathcal{R}_K,
$$ 
where $\mathcal{R}_K$ is the sum of all residuals associated with the moments explored after \emph{exactly} $K$ steps. Therefore, we only need to suitably control $\mathcal{R}_K$ for $K=0,\dots,j-2$. In order to do this, we need the following auxiliary bound.
\end{enumerate} 
\vspace{1 pc}
\emph{Step 2: Auxiliary bound}. At every step of the recursive relation, the sets of test functions which are fed into the lower order terms $A^{j-1}_{recursion}$ and $A^{j-2}_{recursion}$ are modifications of the current set of test functions, specifically:
\begin{itemize}[leftmargin=0.9 cm]
\item 
in the case of $A^{j-1}_{recursion}$, one instance for each of two functions $\varphi_k,\varphi_l$ is replaced by the product $\nabla\varphi_k \cdot \nabla\varphi_l$;
\item 
in the case of $A^{j-2}_{recursion}$, one instance for each of two functions $\varphi_k,\varphi_l$ is removed from the set of test functions, and a pre-factor $\|\varphi_k\|_{C^{1+\Theta}}\|\varphi_l\|_{C^{1+\Theta}}$ is gained.
\end{itemize}
It is thus natural to define the object
\begin{align*}
\left\{(\f{\psi}_{K,r},\f{j}_{K,r}),Y_{K,r}\right\},
\end{align*}
where $r$ is a given way of exhausting the recursive relation for $K$ steps (i.e., a sequence of $K$ moves dictating whether moments of type $A^{j-1}_{recursion}$ or $A^{j-2}_{recursion}$ are explored at each step), where $\f{\psi}_{K,r}$ is the set of test functions after $K$ steps with sequence $r$, where $\f{j}_{K,r}$ is the corresponding set of powers, and where $Y_{K,r}$ is the overall pre-factor cumulated from all the moments of type $A^{j-2}_{recursion}$ for the sequence $r$.

For each $\gamma\in\mathbb{N}_0$, we have the bound
\begin{align}\label{eq:60}
& \left(\prod_{m=1}^{M_{K,r}}{\|\psi_{K,r,m}\|_{C^{\gamma}}^{j_{K,r,m}}}\right) \times |Y_{K,r}|\leq 
j^{2K}j^{j(\max\{\gamma;1+\Theta\}+1)}\cdot\prod_{m=1}^{M}{\|\varphi_m\|_{C^{\max\{\gamma;1+\Theta\}+K}}^{j_m}}, 
\end{align}
which is justified by the following observations:
\begin{itemize}[leftmargin=0.9 cm]
\item The number of occurrences of the original functions $\f{\varphi}$ (i.e., $\f{j}$) is preserved, regardless of the path $r$. This is straightforward to verify by direct inspection of how the recursive terms $A^{j-1}_{recursion}$ and $A^{j-2}_{recursion}$ handle the test functions.
\item The factor $j^{2K}$ provides a bound on the product of the number of individual addends making up the functions $\{\psi_{K,r,m}\}_m$ and of the number of individual addends making up the functions of type $\psi_{\tilde{K},r,m}$ (where $\tilde{K}< K$) found in the term $Y_{K,r}$. This is a simple consequence of the fact that, whenever a step of type $A^{j-1}_{recursion}$ is performed, such product can be multiplied by at most $K\cdot K = K^2$ (i.e., by the product of the maximum lengths of the addends making up the two functions $\phi_k$ and $\phi_l$ which give rise to the new test function $\nabla\phi_k\cdot \nabla\phi_l$). When a step of type $A^{j-2}_{recursion}$ is performed, such product does not increase.
\item The factor 
$
\prod_{m=1}^{M}{\|\varphi_m\|_{C^{\max\{\gamma;1+\Theta\}+K}}^{j_m}}
$
takes into account the evaluation of the norms for all functions (both $\{\psi_{K,r,m}\}_m$ and those associated with $Y_{K,r}$) by using the most restrictive exponent between $1+\Theta$ (needed in any step of type $A^{j-2}_{recursion}$) and $\gamma$ (which is the exponent we are interested in), and adding $K$ (to reflect the unitary increment of differentiation entailed by each step of type $A^{j-1}_{recursion}$). 
\item
The term
$
j^{(\max\{\gamma;1+\Theta\}+1)}
$
is associated with the pre-factor of the inequality 
$$
\left\|\prod_{i=1}^{\ell}{f_i}\right\|_{C^{\beta}}\leq \ell^{\beta+1}\prod_{i=1}^{\ell}{\|f_i\|_{C^{\beta}}}
$$ 
applied with $\ell \leq j $ ($j$ is the maximum number of factors in the addends of type $\prod_{i=1}^{\ell}{f_i}$ making up any function $\psi_{K,r,m}$ and any function associated with $Y_{K,r}$), and with $\beta = \max\{\gamma;1+\Theta\}$. The overall pre-factor $j^{j(\max\{\gamma;1+\Theta\}+1)}$ results from multiplying $j^{(\max\{\gamma;1+\Theta\}+1)}$ by itself $j$ times ($j$ being an upper bound for the total number of functions $\psi_{K,r,m}$ together with all functions associated with $Y_{K,r}$).
\end{itemize}
Crucially, \eqref{eq:60} only depends on $K$ and $j$, and not on the specific path $r$.\\ \\
\emph{Step 3: Bounding $\mathcal{R}_K$}. 
The quantity $2^{K}j^{4(K+1)}=2^K\times j^{4K} \times j^4$ is a bound for both the number of residuals of type $\mbox{Err}_{neg}$ and $\mbox{Err}_{num}$ associated with the moments explored after exactly $K$ steps: Such a quantity is the product of $2^{K}$ (accounting for the recursive splitting of \eqref{eq:54} into two families of moments of lower order), of $j^{4K}$ (accounting for a bound of the pre-factor $\sum_{k,l=1}^{M}{(j_k-\delta_{kl})j_l/2}$ multypling each of the two families of moments), and of $j^4$ (accounting for a bound of the pre-factor $\sum_{k,l=1}^{M}{(j_k-\delta_{kl})j_l/2}$ multypling the residual terms).
Using \eqref{eq:54} and \eqref{eq:60}, we obtain
\begin{align}\label{e:36}
\mathcal{R}_K\leq\left(2^{K}j^{4(K+1)}\right) & \times \left[\underbrace{\left\{N^{-1} TC(d,\rho_{max},\rho_{min})\right\}^{j/2}(2j)^{3(j-2)}\mathcal{E}\!\left(N,h\right)}_{\mbox{Err}_{neg}\mbox{ contributions, see }\eqref{eq:54}}\right.\nonumber\\
& \quad\quad\quad\quad\quad\quad\quad\quad\times\underbrace{j^{2K+(2+\Theta)j}\left(\prod_{m=1}^{M}{\|\varphi_m\|_{C^{1+\Theta+K}}^{j_m}}\right)}_{\mbox{see }\eqref{eq:60}}\nonumber\\
& \quad\quad + \underbrace{h^{p+1}\left\{N^{-1}\max\{T^{1/2};T\}C(d,\rho_{max},\rho_{min})\right\}^{j/2}(2j)^{3(j-2)}}_{\mbox{Err}_{num}\mbox{ contributions, see }\eqref{eq:54}}\nonumber\\
& \quad\quad\quad\quad\quad\quad\quad\quad\left.\times\underbrace{j^{2K+(3+p+\Theta)j}\left(\prod_{m=1}^{M}{\|\varphi_m\|_{C^{p+3+\Theta+K}}^{j_m}}\right) }_{\mbox{see }\eqref{eq:60}}\right].
\end{align}
\emph{Step 4: Concluding the argument}. Since $\mathcal{D}(\boldsymbol{j},\boldsymbol{\varphi},T)\leq \sum_{K=0}^{j-2}{\mathcal{R}_K}$, we obtain
\begin{align*}
\mathcal{D}(\boldsymbol{j},\boldsymbol{\varphi},T) & \stackrel{\mathclap{\eqref{e:36}}}{\leq} \sum_{K=0}^{j-2}{\left[\left(2^{K}j^{4(K+1)}\right)\left\{N^{-1} TC(d,\rho_{max},\rho_{min}) \right\}^{j/2}(2j)^{3(j-2)}\right.}\\
& \quad\quad\quad \times\mathcal{E}\!\left(N,h\right)j^{2K}j^{j(\max\{1+\Theta;1+\Theta\}+1)}\left(\prod_{m=1}^{M}{\|\varphi_m\|_{C^{1+\Theta+K}}^{j_m}}\right) \nonumber\\
& \quad +\left(2^{K}j^{4(K+1)}\right)h^{p+1}\left\{N^{-1}\max\{T^{1/2};T\}C(d,\rho_{max},\rho_{min})\right\}^{j/2}(2j)^{3(j-2)}\\
&\quad\quad\quad \left.\times j^{2K}j^{j(\max\{p+3+\Theta;1+\Theta\}+1)}\left(\prod_{m=1}^{M}{\|\varphi_m\|_{C^{p+3+\Theta+K}}^{j_m}}\right)\right]\\
& \leq \left\{N^{-1} T C(d,\rho_{max},\rho_{min})\right\}^{j/2}j^{C_1j+C_2}\mathcal{E}\!\left(N,h\right)\left(\prod_{m=1}^{M}{\|\varphi_m\|_{C^{j-1+\Theta}}^{j_m}}\right)\\
& \quad + h^{p+1}\left\{N^{-1} \max\{T^{1/2};T\} C(d,\rho_{max},\rho_{min})\right\}^{j/2}\\
& \quad \quad \quad \times j^{C_3j+C_4}\left(\prod_{m=1}^{M}{\|\varphi_m\|_{C^{p+j+1+\Theta}}^{j_m}}\right),
\end{align*}
which -- up to trivial rescaling in $N^{1/2}$ -- is precisely \eqref{eq:63c}.

\subsection{Exponentially decaying estimate for $\mean{\|\rho_h^{-}\|^2_h}$ and moment bounds for $\rho_h$}\label{ss:exp}

\begin{proposition}
Let the assumptions and notation of the finite difference case of Theorem~\ref{main1} be in place; in particular, let $\rho_h$ be a solution to the Dean--Kawasaki equation discretised using finite elements in the sense of \eqref{e:3}. Assuming in addition the scaling \eqref{eq:202}, namely $h\geq C(d,\rho_{min},\rho_{max})N^{-1/d} |\log N|^{2/d}(T+1)$, we then have the estimate
\begin{align}\label{r:1-a}
&\mathbb{P}\Bigg[\sup_{\f{x}\in G_{h,d}, t\in [0,T]} |\rho_h-\mathbb{E}[\rho_h]|(\f{x},t) \geq B \frac{\rho_{min}}{4} \Bigg]
\nonumber\\&
\quad \leq C \exp\bigg(-\frac{\rho_{min}B^{1/2} N^{1/2}h^{d/2}}{C\rho_{max}^{1/2}}\bigg)
+C \exp\big(-cB^{1/4} h^{-1}\big)
\end{align}
for any $B\geq 1$.
In particular, we can deduce
\begin{align}
\mathbb{E}\Bigg[\sup_{\f{x}\in G_{h,d},t\in [0,T]} |\rho_h-\mean{\rho_h}|^j(\f{x},t) \Bigg]^{1/j}
& \leq C(d,\rho_{max},\rho_{min}) j^4 \label{r:2a-a}\\
\mathbb{E}\Bigg[\sup_{\f{x}\in G_{h,d},t\in [0,T]} |\rho_h(\f{x},t)|^j \Bigg]^{1/j} 
& \leq C(d,\rho_{max},\rho_{min}) j^4 \label{r:2b-a}
\end{align}
for any $j\geq 1$, as well as
\begin{align}\label{r:3-a}
\mean{\sup_{t\in [0,T]} \|\rho_h^{-}(t)\|^{2}_{h}} \leq C(d,\rho_{min},\rho_{max}) \left\{\exp\bigg(-\frac{\rho_{min} N^{1/2}h^{d/2}}{C\rho_{max}^{1/2}}\bigg)
+ \exp\big(-ch^{-1}\big)\right\}.
\end{align}
\end{proposition}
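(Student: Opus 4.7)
The tail estimate \eqref{r:1-a} is the core of the result; once it is in hand, \eqref{r:2a-a} follows by integrating the tail against $js^{j-1}\,\mathrm{d}s$, \eqref{r:2b-a} follows from \eqref{r:2a-a} together with $0\leq\mathbb{E}[\rho_h]\leq\rho_{max}$ (since $\mathbb{E}[\rho_h]$ solves the deterministic discrete heat equation \eqref{e:8}), and \eqref{r:3-a} uses the key observation that $\mathbb{E}[\rho_h(\f{x},t)]\geq\rho_{min}$ for all $(\f{x},t)$, so that $\rho_h^-(\f{x},t)>0$ forces $|\rho_h-\mathbb{E}[\rho_h]|(\f{x},t)\geq\rho_{min}$. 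Combining this with Cauchy--Schwarz yields
\[
\mathbb{E}\Big[\sup_{t\leq T}\|\rho_h^-(t)\|_h^2\Big]\leq C\,\mathbb{E}\Big[\sup_{\f{x},t}|\rho_h-\mathbb{E}\rho_h|^4\Big]^{1/2}\,\mathbb{P}\Big[\sup_{\f{x},t}|\rho_h-\mathbb{E}\rho_h|\geq\rho_{min}\Big]^{1/2},
\]
into which one plugs \eqref{r:2a-a} and \eqref{r:1-a} with $B=4$.

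\textbf{Proof of \eqref{r:1-a}.} Fix a grid point $\f{x}$. From \eqref{e:3} and the identity $(\rho_h,e_{\f{x}})_h=h^{d/2}\rho_h(\f{x})$, the real-valued process $X_t:=\rho_h(\f{x},t)-\mathbb{E}[\rho_h(\f{x},t)]$ is a semimartingale whose martingale part $M_t$ has, by the cross-variation identity \eqref{e:7} specialised to $\phi_{1,h}=\phi_{2,h}=h^{-d/2}e_{\f{x}}$, the quadratic variation
\[
\mathrm{d}\langle M\rangle_t = N^{-1}h^{-d}\bigl(\rho_h^+(t),|\nabla_h e_{\f{x}}|^2\bigr)_h\,\mathrm{d}t\;\leq\; C N^{-1}h^{-2}\max_{\f{y}\sim\f{x}}\rho_h^+(\f{y},t),
\]
while the drift $X_t-M_t$ comes from the discrete Laplacian applied to the mean-zero quantity $\rho_h-\mathbb{E}[\rho_h]$. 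The key idea is a self-consistent bootstrap: define the stopping time $\tau_K:=\inf\{t\colon \sup_{\f{y}}\rho_h^+(\f{y},t)>K\rho_{max}\}$. On $\{t<\tau_K\}$ the bracket $\langle M\rangle_t$ is bounded by $C K\rho_{max} N^{-1}h^{-2}t$, so the exponential martingale inequality (Bernstein for continuous martingales) applied to $M_{t\wedge\tau_K}$ yields
\[
\mathbb{P}\Big[\sup_{s\leq T\wedge\tau_K}|M_s|\geq\lambda\Big]\leq 2\exp\!\Big(-\frac{c\lambda^2 Nh^2}{K\rho_{max}T}\Big).
\]
Choosing $\lambda\sim B\rho_{min}/8$ and $K\sim B^{1/2}$ (so that $\lambda^2/K\sim B^{3/2}$), we obtain a bound of the desired form $\exp(-c\rho_{min}B^{1/2}N^{1/2}h^{d/2}/\rho_{max}^{1/2})$ after also passing from the martingale to the full process $X_t$ (the drift contribution, being governed by the discrete heat semigroup, is controlled by the fluctuation itself via a Gronwall-type argument and an additional union bound). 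The second exponential $\exp(-cB^{1/4}h^{-1})$ captures the event $\{\tau_K\leq T\}$ and the time-discretisation error: we discretise $[0,T]$ into $O(T/\Delta t)$ intervals with $\Delta t\sim h^c$, bound the oscillation of $X$ inside each interval by the same Bernstein inequality, and union-bound over both the $O(h^{-d})$ grid points and the $O(T/\Delta t)$ time nodes; the scaling assumption \eqref{eq:202} ensures that all resulting logarithmic factors are absorbed.

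\textbf{Consequences.} For \eqref{r:2a-a} we write
\[
\mathbb{E}\Big[\sup_{\f{x},t}|\rho_h-\mathbb{E}\rho_h|^j\Big] = j\int_0^\infty s^{j-1}\mathbb{P}\Big[\sup_{\f{x},t}|\rho_h-\mathbb{E}\rho_h|\geq s\Big]\,\mathrm{d}s,
\]
split the integral at $s\sim\rho_{min}$, and plug in \eqref{r:1-a} with $B=4s/\rho_{min}$. The dominant contribution comes from $s\gg 1$, where the integral $\int s^{j-1}e^{-c\sqrt{s}}\,\mathrm{d}s\sim (2j)!\,c^{-2j}\sim j^{4j}$ yields the factor $j^4$ after taking the $j$-th root. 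The bound \eqref{r:2b-a} then follows from the triangle inequality and $\mathbb{E}[\rho_h]\leq\rho_{max}$. For \eqref{r:3-a}, the reasoning above together with \eqref{r:1-a} (at $B=4$) and \eqref{r:2a-a} immediately produces the stated stretched-exponential bound.

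\textbf{Main obstacle.} The principal difficulty is the self-referential nature of the quadratic variation: the noise coefficient is $\sqrt{\rho_h^+}$, so any exponential martingale estimate is only as good as an a~priori $L^\infty$ bound on $\rho_h^+$. Resolving this requires a careful interplay between a stopping-time truncation at a level $K$ and an iterative improvement of the tail bound, carried out in the scaling regime \eqref{eq:202} where $Nh^d$ is at least polylogarithmically large, so that the Gaussian concentration at scale $\sqrt{Nh^d}$ dominates all union-bound losses of order $h^{-d}$. The secondary technical point is obtaining a genuine supremum in time, which necessitates controlling both the discrete-Laplacian drift (size $h^{-2}$) and the continuous-time oscillations of $M_t$ via a time-grid refinement whose mesh is dictated by the noise scale.
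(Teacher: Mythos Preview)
Your overall architecture (stopping-time truncation to control the noise coefficient, union bound over space, time-grid refinement for the continuous-time supremum, and the deduction of \eqref{r:2a-a}--\eqref{r:3-a} from the tail bound) matches the paper's proof closely. However, your core martingale estimate for \eqref{r:1-a} has a real gap, and it stems from the choice of test function.

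You test $\rho_h-\mathbb{E}[\rho_h]$ against the \emph{static} discrete Dirac $e_{\f{x}}$. This produces two difficulties that you do not resolve. First, the drift: the process $X_t=\rho_h(\f{x},t)-\mathbb{E}[\rho_h(\f{x},t)]$ carries the term $\tfrac{1}{2}\Delta_h(\rho_h-\mathbb{E}[\rho_h])(\f{x},t)$, which is of order $h^{-2}\sup_{\f{y}}|\rho_h-\mathbb{E}[\rho_h]|(\f{y},t)$. A Gronwall argument with constant $h^{-2}$ over a time horizon $T$ of order one is useless; it would produce a factor $e^{CTh^{-2}}$. Second, the quadratic variation: your displayed bound drops a factor $h^{-d}$ (the correct bound is $\mathrm{d}\langle M\rangle_t\leq CN^{-1}h^{-d-2}\max\rho_h^+\,\mathrm{d}t$), and even with the correct scaling the Bernstein exponent becomes $\sim B^{3/2}\rho_{min}^2 Nh^{d+2}/(\rho_{max}T)$, which is \emph{not} the claimed $\sim B^{1/2}\rho_{min}(Nh^d)^{1/2}/\rho_{max}^{1/2}$ and is in fact too small once $h^2\ll T/(Nh^d)^{1/2}$.

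The paper fixes both problems simultaneously by testing against the \emph{backward-heat evolved} discrete Dirac $\phi_h^t$, i.e.\ $\phi_h^T=h^{-d/2}e_{\f{x}_0}$ and $\partial_t\phi_h=-\tfrac{1}{2}\Delta_h\phi_h$. Then $(\rho_h-\mathbb{E}[\rho_h],\phi_h^t)_h$ is a \emph{pure martingale} (the drift cancels exactly, cf.\ Block~3 in Section~3.3), and its integrated quadratic variation is bounded by $N^{-1}\|\rho_h^+\|_\infty\int_0^T\|\nabla_h\phi_h\|_h^2\,\mathrm{d}t$. The parabolic energy identity gives $\int_0^T\|\nabla_h\phi_h\|_h^2\,\mathrm{d}t\leq \|\phi_h^T\|_h^2\leq Ch^{-d}$, independent of $T$ and without the extra $h^{-2}$. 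Combining this with the stopping time $T_s$, Doob's inequality on the $j$-th moment, and optimisation in $j$ yields the correct exponent $\rho_{min}B^{1/2}(Nh^d)^{1/2}/\rho_{max}^{1/2}$. The rest of the paper's argument (Steps~3--7) is essentially what you describe. In short: the missing ingredient is the backward-heat test function, which is precisely what makes both the drift and the quadratic-variation scaling come out right.
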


\begin{proof}
We split the proof into several steps.

{\bf Step 1: energy estimates for test functions}.
In order to evaluate $\rho_h(\f{x}_0,T)$ at a given point $\f{x}_0$, we choose $\phi_h(\cdot,T)\in \Lh$ as the function satisfying $( \phi_h(\cdot,T), \eta_h )_h = \eta_h(\f{x}_0)$ for all $\eta_h\in \Lh$ and evolve $\phi_h$ in time by the backward heat equation
\begin{align}
\label{BackwardHeat}
\partial_t \phi_h = -\frac{1}{2} \Delta_h \phi_h.
\end{align}
By the standard energy estimate for the discrete heat equation 
we get
\begin{align}
\label{EstimateEnergyDiscreteDiracHeatEquation}
\int_0^T  \|\nabla \phi_h \|_h^2 \,\m t
\leq 2 \|\phi_h(T) \|_h^2 
\leq C h^{-d}.
\end{align}

{\bf Step 2: exponentially decaying bounds for $|\rho_h-\mean{\rho_h}|(\f{x}_0)$ for chosen point $\f{x}_0$}.
Using \eqref{eq:702}, \eqref{b:5}, and \eqref{BackwardHeat}, we obtain by the It\^o formula for any positive integer $j$
\begin{align*}
\m \big(\rho_h-\mathbb{E}[\rho_h], \phi_h\big)_h^j & = j \big(\rho_h-\mathbb{E}[\rho_h], \phi_h\big)_h^{j-1} 
N^{-1/2}\!\!\!\!\!\!\!\sum_{(\f{y},\ell)\in(G_{h,d},\{1,\dots,d\})}{\!\!\left(\mathcal{F}_\rho\f{e}^d_{h,\f{y},\ell},\nabla_h \phi_h \right)_h\m\beta_{(\f{y},\ell)}}
\\&~~~~~~
+\frac{j(j-1)}{2} \big(\rho_h-\mathbb{E}[\rho_h], \phi_h\big)_h^{j-2} N^{-1} (\rho_h^+ , |\nabla \phi_h|^2)_h \,\m t.
\end{align*}
In particular, $(\rho_h-\mathbb{E}[\rho_h], \phi_h )_h$ is a martingale.
Integrating in time up to a stopping time $T_s$ and taking the expected value, we obtain
\begin{align*}
&\mean{\left(\big(\rho_h-\mathbb{E}[\rho_h]\big)(\cdot,T\wedge T_s), \phi_h(\cdot,T\wedge T_s) \,\right)_h^j}
\\&~~
=\frac{j(j-1)}{2} \mean{\int_0^{T\wedge T_s}
\big(\rho_h-\mathbb{E}[\rho_h], \phi_h \big)_h^{j-2} N^{-1} (\rho_h^+, |\nabla \phi_h|^2)_h \m t }.
\end{align*}
Choosing $T_s$ for arbitrary but fixed $B\geq 1$ as
\begin{align*}
T_s:=\inf\Big\{t>0:\sup_{\f{x}\in \Ghd} |\rho_h-\mathbb{E}[\rho_h]|(t,\f{x}) \geq B\frac{\rho_{min}}{2}\Big\},
\end{align*}
we get using $\rho_{max}\geq \rho_{min}$ and the assumption $|\mathbb{E}[\rho_h]|\leq \rho_{max}$
\begin{align*}
& \mean{\left(\big(\rho_h-\mathbb{E}[\rho_h]\big)(\cdot,T\wedge T_s), \phi_h(\cdot,T\wedge T_s) \,\right)_h^j}
\\&~~
\leq j(j-1) N^{-1} B \rho_{max} \mean{ \int_0^{T\wedge T_s}
\big(\rho_h-\mathbb{E}[\rho_h], \phi_h \big)_h^{j-2} \|\nabla \phi_h \|^2 \,\m t }
\\&~~
\leq j^2 N^{-1} B \rho_{max} \mean{
\sup_{t\in [0,T\wedge T_s]} \big(\rho_h-\mathbb{E}[\rho_h], \phi_h\big)_h^{j}}^{(j-2)/j} \int_0^{T} \|\nabla \phi_h \|^2 \,\m t
\\&~~
\stackrel{\eqref{EstimateEnergyDiscreteDiracHeatEquation}}{\leq} Cj^2 N^{-1} B \rho_{max} \mathbb{E}\Bigg[
\sup_{t\in [0,T\wedge T_s]} \big(\rho_h-\mathbb{E}[\rho_h], \phi_h \big)_h^{j}\Bigg]^{(j-2)/j} h^{-d}.
\end{align*}
Using Doob's martingale inequality, we deduce for nonnegative even integers $j$
\begin{align*}
&\mean{
\sup_{t\in [0,T\wedge T_s]} \big(\rho_h-\mathbb{E}[\rho_h], \phi_h  \big)_h^{j}}^{2/j}
\leq C j^2 \frac{B \rho_{max}}{N h^{d}}.
\end{align*}
Raising both sides to the power $j/2$ and using Chebyshev's inequality, we get after optimizing in $j$
\begin{align*}
\mathbb{P}\left[
\sup_{t\in [0,T\wedge T_s]} \left| \big(\rho_h-\mathbb{E}[\rho_h], \phi_h \big)_h \right| \geq B \frac{\rho_{min}}{8} \right]
\leq 2 \exp\left(-\frac{\rho_{min} B N^{1/2}h^{d/2}}{C B^{1/2} \rho_{max}^{1/2}}\right).
\end{align*}
In particular, we deduce by the definition of $\phi_h(\cdot,T)$
\begin{align*}
\mathbb{P}\left[T\leq T_S \text{ and } |\rho_h-\mathbb{E}[\rho_h]|(\f{x}_0,T) \geq B\frac{\rho_{min}}{8} \right]
\leq 2 \exp\left(-\frac{\rho_{min} B^{1/2} N^{1/2}h^{d/2}}{C\rho_{max}^{1/2}}\right).
\end{align*}

{\bf Step 3: extending the estimate to finitely many time points in $[0,T\wedge T_s]$}.
Applying the previous estimate for all $\f{x}_0 \in \Ghd$ (there are $\propto h^{-d}$ of such points), and for all times $h^\beta$, $2h^\beta$, $3h^\beta$, $\ldots$, for some $\beta>0$ to be chosen, we obtain
\begin{align}
\label{CoveringLinftyEstimate}
&\mathbb{P}\left[
||(\rho_h-\mathbb{E}[\rho_h])(\cdot,ih^\beta)||_{L^\infty} \geq B\frac{\rho_{min}}{8} \text{ for some }i\in \mathbb{N}\text{ with }ih^\beta\leq T\wedge T_S  \right]
\nonumber \\ 
& \quad \leq C h^{-d} \frac{T}{h^\beta} \exp\left(-\frac{\rho_{min} B^{1/2} N^{1/2}h^{d/2}}{C\rho_{max}^{1/2}}\right).
\end{align}

{\bf Step 4: extending the estimate to all times in $[0,T]$}.
It only remains to pass from the discrete times $ih^\beta$ to all times $t$ and to remove the restriction to times $t \leq T_S$. Let $e_k\in \Lh$ be nodal function satisfying $e_k(\f{x}_j) = \delta_{kj}$. Then the differential
\begin{align*}
\m (\rho_h, e_k )_h
=\frac{1}{2} \left( \Delta \rho_h, e_k \right)_h 
-N^{-1/2} \sum_{(\f{y},\ell)\in(G_{h,d},\{1,\dots,d\})}{\!\!\left(\mathcal{F}_\rho\f{e}^d_{h,\f{y},\ell},\nabla_h e_k \right)_h\m\beta_{(\f{y},\ell)}},
\end{align*}
entails, using in a second step also Doob's maximal inequality and abbreviating $\mathcal{W}(\rho_h^+,e_k) := \sum_{(\f{y},\ell)\in(G_{h,d},\{1,\dots,d\})}{\!\!\left(\mathcal{F}_\rho\f{e}^d_{h,\f{y},\ell},\nabla_h e_k \right)_h\m\beta_{(\f{y},\ell)}}$
\begin{align*}
&\sum_k \mean{\chi_{ih^\beta\leq T_S} \sup_{t\in [ih^\beta,(i+1)h^\beta]} 
\left| (\rho_h(\cdot,t), e_k)_h -(\rho_h(\cdot,ih^\beta), e_k)_h \right|^j }^{1/j}
\\&~~
\leq Ch^{-2} \sum_k\mean{\chi_{ih^\beta\leq T_S} \bigg(\int_{ih^\beta}^{(i+1)h^\beta} \left|  (\rho_h(\cdot,t), e_k)_h \right| \,\m t\bigg)^j}^{1/j}
\\&~~~~~
+C N^{-1/2}\sum_k \mean{\chi_{ih^\beta\leq T_S} \sup_{t\in [ih^\beta,(i+1)h^\beta]} \big| \mathcal{W}(\rho_h^+,e_k)(t)-\mathcal{W}(\rho_h^+,e_k)(ih^\beta)\big|^{j}}^{1/j}
\\&~~
\leq Ch^{-2} \sum_k \mean{\chi_{ih^\beta\leq T_S} \bigg(\int_{ih^\beta}^{(i+1)h^\beta} \left|  (\rho_h(\cdot,t), e_k)_h \right| \,\m t\bigg)^j }^{1/j}
\\&~~~~~
+CN^{-1/2} \sum_k \mathbb{E}\Bigg[\chi_{ih^\beta\leq T_S} \big| \mathcal{W}(\rho_h^+,e_k)((i+1)h^\beta)-\mathcal{W}(\rho_h^+,e_k)(ih^\beta)\big|^{j}\Bigg]^{1/j}.
\end{align*}
Using the triangle inequality for the first term on the right-hand side and a (straightforward but rather pessimistic) estimate on the quadratic variation of $\mathcal{W}$, we obtain
\begin{align*}
&\sum_k \mean{\chi_{ih^\beta\leq T_S} \sup_{t\in [ih^\beta,(i+1)h^\beta]} \left| (\rho_h(\cdot,t), e_k)_h - (\rho_h(\cdot,ih^\beta), e_k )_h \right|^j }^{1/j}
\\&~~
\leq Ch^{\beta-2} \sum_k \mean{\chi_{ih^\beta\leq T_S} \sup_{t\in [ih^\beta,(i+1)h^\beta]} \left|(\rho_h(\cdot,t), e_k)_h - (\rho_h(\cdot,ih^\beta), e_k)_h \right|^j }^{1/j}
\\&~~~~~
+Ch^{\beta-2} \sum_k \mean{\chi_{ih^\beta\leq T_S} \left| (\rho_h(\cdot,ih^\beta), e_k)_h \right|^j}^{1/j}
\\&~~~~~
+Cj h^{-2} N^{-1/2} \sum_k \mean{\chi_{ih^\beta\leq T_S} \bigg(\int_{ih^\beta}^{(i+1)h^\beta} (\rho_h^+,1)_h \,\m t\bigg)^{j/2}}^{1/j}.
\end{align*}
By absorption, the triangle inequality, the fact that $\sum_k 1 \leq C h^{-d}$, this implies for $h\leq c(\beta)$
\begin{align*}
&\sum_k \mean{\chi_{ih^\beta\leq T_S} \sup_{t\in [ih^\beta,(i+1)h^\beta]} \left| (\rho_h(\cdot,t), e_k )_h - (\rho_h(\cdot,ih^\beta), e_k)_h \right|^j }^{1/j}
\\&~~
\leq Ch^{\beta-2} \sum_k \mean{\chi_{ih^\beta\leq T_S} \left| (\rho_h(\cdot,ih^\beta), e_k)_h \right|^j}^{1/j}
\\&~~~~~
+Cj h^{\beta/2-d-2} N^{-1/2} \sum_l \mean{\chi_{ih^\beta\leq T_S} \left| ( \rho_h(\cdot,ih^\beta), e_l)_h \right|^{j/2}}^{1/j}
\\&~~~~~
+Cj h^{\beta/2-d-2} N^{-1/2} 
\\&~~~~~~~~~~~~~
\times\sum_l \mean{\chi_{ih^\beta\leq T_S} \sup_{t\in [ih^\beta,(i+1)h^\beta]} \left|  (\rho_h(\cdot,t) ,e_l)_h - (\rho_h(\cdot,ih^\beta), e_l)_h \right|^{j/2} }^{1/j}.
\end{align*}
Using Young's inequality and absorbing as well as using the fact that for $ih^\beta \leq T_S$ we have $|\rho_h|\leq (B+1) \rho_{max}$, we obtain
\begin{align*}
&\sum_k \mean{\chi_{ih^\beta\leq T_S} \sup_{t\in [ih^\beta,(i+1)h^\beta]} \left| ( \rho_h(\cdot,t), e_k)_h  - ( \rho_h(\cdot,ih^\beta), e_k)_h \right|^j }^{1/j}
\\&~~
\leq Ch^{\beta-d-2} (B+1) \rho_{max}
+Cj h^{\beta/2-d-2} (B+1)^{1/2} N^{-1/2} \rho_{max}^{1/2} + C j^2 h^{\beta-2d-4} N^{-1}.
\end{align*}
For $\beta\geq 6d+8$ and for all $h\leq c(\rho_{min},\rho_{max})$, we obtain
\begin{align}
\label{IntermediateLinftyEstimate}
& \mathbb{P}\left[ih^\beta\leq T_S,\sup_{t\in [ih^\beta,(i+1)h^\beta]} \|\rho_h(\cdot,t)-\rho_h(\cdot,jh^\beta)\|_{L^\infty} \geq B\frac{\rho_{min}}{10} \right] \leq C \exp(-B^{1/4} h^{-\beta/8}).
\end{align}
{\bf Step 5: obtaining \eqref{r:1-a}}.
Overall, from \eqref{CoveringLinftyEstimate} and \eqref{IntermediateLinftyEstimate} we conclude
\begin{align*}
&\mathbb{P}\Bigg[\sup_{t\in [0,T]} \big(\rho_h-\mathbb{E}[\rho_h]\big)(\f{x}_0,t) \geq B \frac{\rho_{min}}{4} \Bigg]
\\&
\quad \leq C T h^{-\beta-d} \exp\bigg(-\frac{\rho_{min}B^{1/2} N^{1/2}h^{d/2}}{C\rho_{max}^{1/2}}\bigg)
+C \exp\big(-cB^{1/4} h^{-1}\big).
\end{align*}
Upon choosing $h\geq C(d,\rho_{min},\rho_{max}) N^{-1/d} |\log N|^{2/d}(1+T)$, this implies \eqref{r:1-a}.

{\bf Step 6: obtaining \eqref{r:2a-a}--\eqref{r:2b-a}}.
For any $z\geq 0$, we use \eqref{r:1-a} to write
\begin{align*} 
& \mathbb{P}\left[\sup_{x\in\domain,t\in [0,T]} |\rho_h-\mean{\rho_h}|^j(x,t) > z\right] \\
&~~ \stackrel{\mathclap{\eqref{r:1-a}}}{\leq} \mathbf{1}_{\{z^{1/j} < \rho_{min}/4\}} \\
&~~ \quad + \mathbf{1}_{\{z^{1/j} \geq \rho_{min}/4\}} \left(C T \exp\bigg(-\frac{\rho_{min}^{1/2}z^{1/2j} N^{1/2}h^{d/2}}{C\rho_{max}^{1/2}}\bigg)
+C \exp\big(-c\rho_{min}^{-1/4}z^{1/4j} h^{-1}\big)\right)
\end{align*}
For a non-negative random variable $Z$, we know that $\mean{Z}=\int_{0}^{\infty}{\mathbb{P}(Z> z)\m z}$. We set $Z:=\sup_{x\in\domain,t\in [0,T]} |\rho_h-\mean{\rho_h}|^j(x,t)$ and deduce 
\begin{align*}
& \mean{\sup_{x\in\domain,t\in [0,T]} |\rho_h-\mean{\rho_h}|^j(x,t)} \\
& \leq \int_{0}^{(\rho_{min}/4)^j}{\m z} \\
& \quad + \int_{(\rho_{min}/4)^j}^{\infty}{\left(C T \exp\bigg(-\frac{\rho_{min}^{1/2}z^{1/2j} N^{1/2}h^{d/2}}{C\rho_{max}^{1/2}}\bigg) + C \exp\big(-c\rho_{min}^{-1/4}z^{1/4j} h^{-1}\big)\right)\m z}\\
& \leq C^{j}(\rho_{min},\rho_{max})(1+T)j^{4j}\left\{(N^{-1}h^{-d})^{Cj}+1\right\},
\end{align*}
where we have used the Gaussian moments estimates in the last inequality, and \eqref{r:2a-a} is proved. Inequality \eqref{r:2b-a} follows from the triangle inequality, the assumption $\mean{|\rho_h|}\leq \rho_{max}$ and \eqref{r:2a-a}.

{\bf Step 7: obtaining \eqref{r:3-a}}. We use the H\"older inequality and the lower bound $\mean{\rho_h}\geq \rho_{min}$ and obtain \eqref{r:3-a} via the estimate
\begin{align*}
 \mean{\sup_{t\in [0,T]} \|\rho_h^{-}(t)\|_h^{2}}
& \leq C\mean{\sup_{t\in [0,T]} \|\mathbf{1}_{\{|(\rho_h-\mean{\rho_h})(t)| \geq \rho_{\min}\}}\cdot(\rho_h-\mean{\rho_h})(t)\|^{2}_{L^\infty}} \\
&   \leq C\mean{\sup_{t\in [0,T]} \|\mathbf{1}_{\{|(\rho_h-\mean{\rho_h})(t)| \geq \rho_{\min}\}}\|^4_{L^\infty}}^{1/2} \\
& \quad \quad \times \mean{\sup_{t\in [0,T]}\|(\rho_h-\mean{\rho_h})(t)\|^{4}_{L^\infty}}^{1/2}\\
&   \stackrel{\mathclap{\eqref{r:1-a}\eqref{r:2a-a}}}{\leq} \,\,\, C(d,\rho_{min},\rho_{max}) \left[\exp\left(-\frac{\rho_{min} (Nh^{d})^{1/2}}{C\rho_{max}^{1/2}}\right)
+ \exp(-ch^{-1})\right]\!.\qedhere
\end{align*}
\end{proof}

\section{Numerical examples}\label{num}

In this section, we give numerical examples that illustrate that the Dean--Kawasaki equation correctly captures the fluctuations of diffusing non-interacting particles\footnote{The datasets generated and analysed during the current study are available from the corresponding author on reasonable request.}. 
We limit our attention to the case $d=1$.

To compute the motion of $N$ Brownian particles, we perform a direct simulation based on the transition probabilities; this is feasible as our numerical experiments only concern empirical measures $\mu_t^N$ at two different times $T_1$ and $T_2$ (see below).
Our discretisation of the Dean--Kawasaki equation is obtained as follows:
\begin{itemize}[leftmargin=0.9 cm]
\item For the spatial discretisation of the Dean--Kawasaki equation \eqref{DeanKawasaki}, we use the finite difference scheme from Definition~\ref{def:1} with order $p=1$.
\item To discretise the spatially semi-discrete equation in time, we use the (two-step) BDF2 scheme (see, e.g., \cite{SDELab}). The first timestep is performed using an explicit treatment for the noise and a mixed implicit-explicit Euler scheme for the deterministic diffusion.
\item Overall, our discrete scheme for the Dean--Kawasaki equation \eqref{DeanKawasaki} reads for the first timestep
\begin{align}\label{eq:4000}
\rho_{h}^{\Delta t}
&=\rho_{h}^{0}+\Big(\tfrac{1}{4} \Delta_h \rho_h^{\Delta t}+\tfrac{1}{4} \Delta_h \rho_h^{0}\Big)\Delta t\nonumber
\\&~~~~
+\sum_{{\f{y}\in G_{h,1}}} \nabla_h \cdot \Big(\sqrt{(\rho_{h}^{0})_+} e^1_{\f{y}} \Big)\big(\tilde \beta_{\f{y}}(\Delta t) - \tilde \beta_{\f{y}}(0)\big),
\end{align}
and for the $(m+1)$-th timestep, $m\geq 1$,
\begin{align}\label{eq:4001}
~~~~~~~~
\rho_{h}^{(m+1)\Delta t}
&=
\tfrac{4}{3}\rho_{h}^{m\Delta t}-\tfrac{1}{3}\rho_{h}^{(m-1)\Delta t}
+\tfrac{2}{3} \Delta_h \rho_h^{(m+1)\Delta t} \Delta t \nonumber \\
& \quad -\tfrac{1}{3}\sum_{{\f{y}\in G_{h,1}}} \nabla_h \cdot \Big(\sqrt{(\rho_{h}^{(m-1)\Delta t})_+} {e}^1_{\f{y}} \Big)\big(\tilde \beta_{\f{y}}(m\Delta t) - \tilde \beta_{\f{y}}((m-1) \Delta t)\big) \nonumber\\
& \quad+ \sum_{{\f{y}\in G_{h,1}}} \nabla_h \cdot \Big(\sqrt{(\rho_{h}^{m\Delta t})_+} e^1_{\f{y}} \Big)\big(\tilde \beta_{\f{y}}((m+1)\Delta t) - \tilde \beta_{\f{y}}(m \Delta t)\big),
\end{align}
where $(\beta_{\f{y}})$ are independent Brownian motions.
\item 
We place the initial positions $\{\f{w}_k(0)\}_{k=1}^N$ of the Brownian particles only at grid points of $G_{h,1}$. Consequently, we define the initial condition $\rho_h(0)$ by requiring that the equality $\langle \mu^N_0, \eta \rangle = (\rho_h(0),\mathcal{I}_h\eta)_h$ holds for any test function $\eta$. This way, we avoid any error caused by deviating initial conditions. 
\item As we are primarily interested in scaling in $h$ and $N$, we make the following choices: 
\begin{itemize}[leftmargin=0.9 cm]
\item we set the time-step $\Delta t:=0.001$, which, according to our numerical convergence tests, is small enough for the spatial discretisation error to dominate over the time error, and
\item we keep the discretisation parameter $h$ above or equal to the threshold $2\pi\cdot2^{-7}\approx 0.05$, so that the finite difference error dominates over the error associated with the negative part of $\rho_h$.
\end{itemize}

\end{itemize}

\begin{figure}
\includegraphics[scale=0.23]{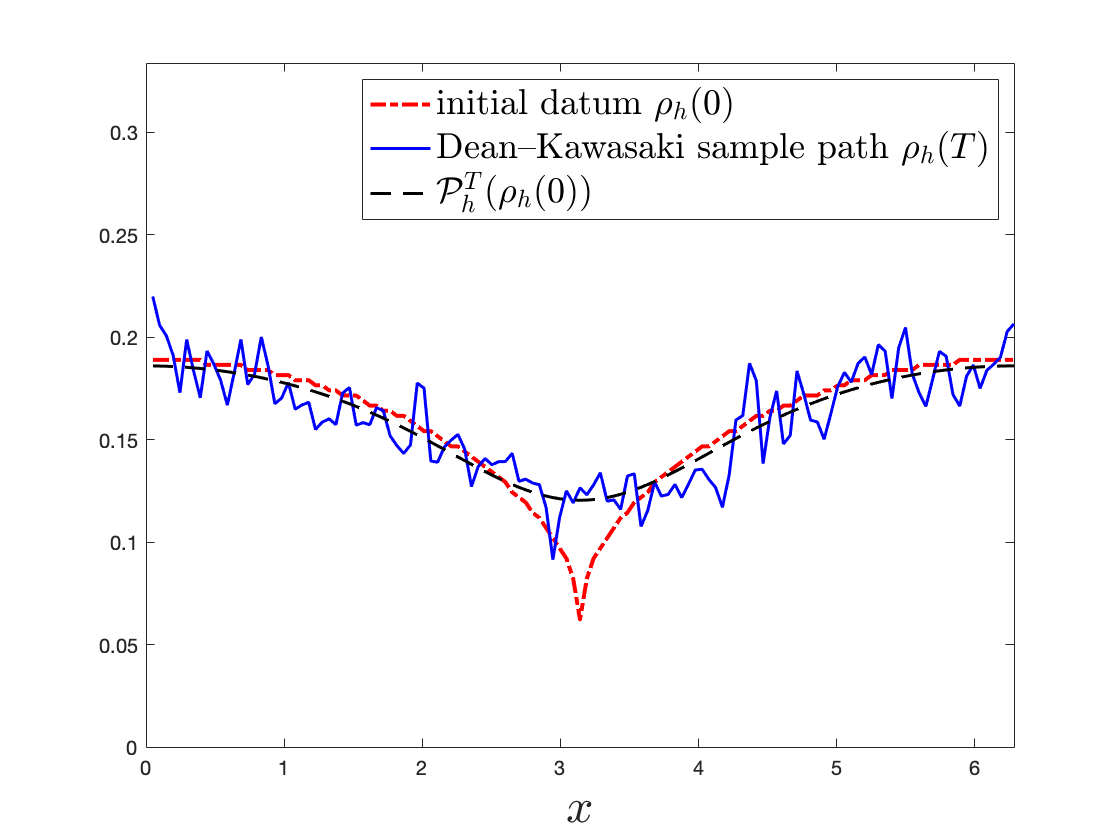}
\\
\includegraphics[scale=0.45]{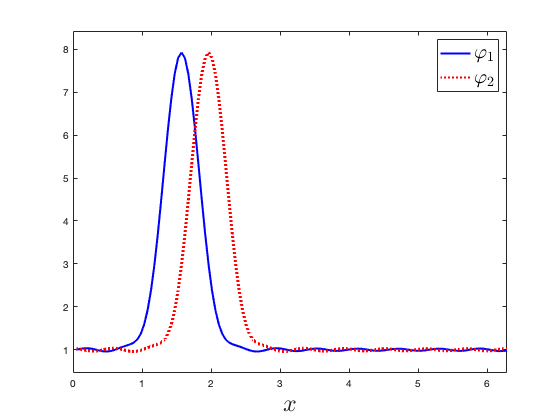}
\caption{\label{FigurePlotSolutionAndTestFunctions2}\emph{Top}: A plot of the initial datum $\rho_0(x):=1/2+|\sin(\tfrac{x-\pi}{2})|^{1/2}$ (dashed red line), its deterministic evolution by the heat equation at time $T_1:=0.4$ (dashed black line), and a sample path from the Dean--Kawasaki equation at time $T_1:=0.4$ for $N:=8137$ particles (blue solid line). \emph{Bottom}: The test functions $\varphi_1$, $\varphi_2$ used for the moment computations (blue solid line, red dotted line).}
\end{figure}

\begin{figure}
\includegraphics[scale=0.14]{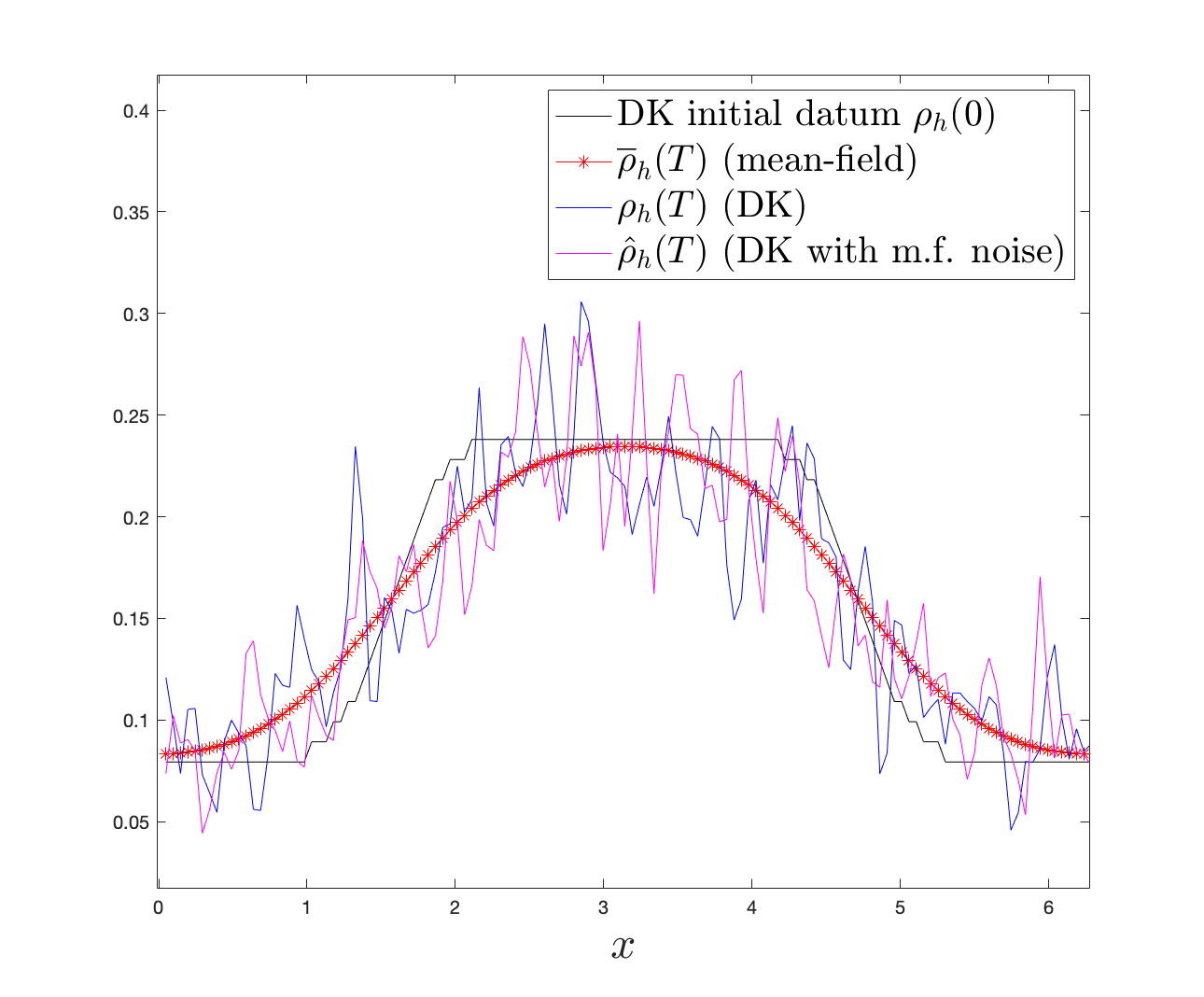}
\includegraphics[scale=0.16]{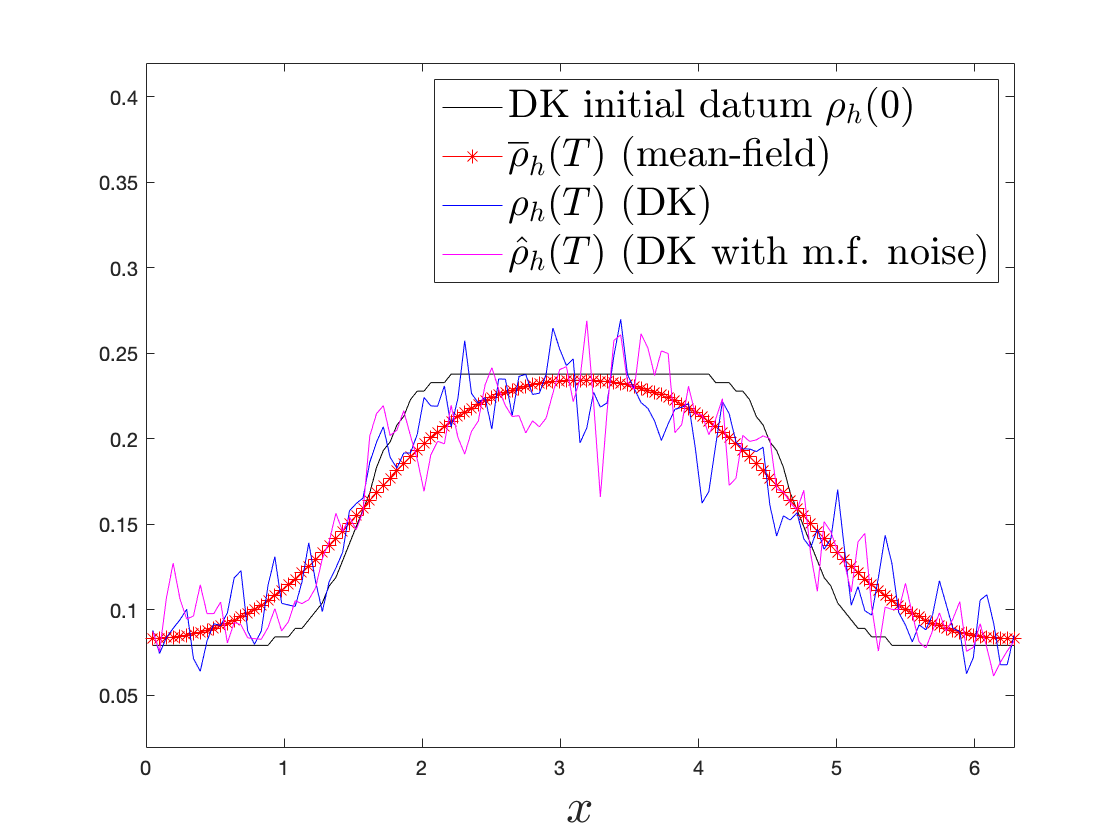}
\\
\includegraphics[scale=0.45]{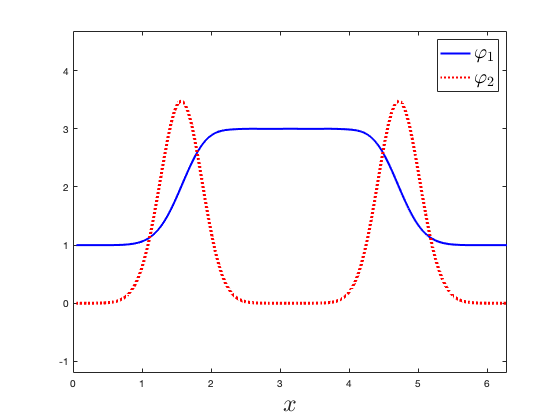}
\caption{\label{FigurePlotSolutionAndTestFunctions3}\emph{Top Left}: A plot of the initial datum $\rho_0(x):=3-2e^{-\sin^8(x/2)/0.03}$ (black solid line), its deterministic evolution by the heat equation at time $T_1:=0.4$ (dashed red line), a sample path from the Dean--Kawasaki equation at time $T_1:=0.4$ for $N:=2011$ particles (blue solid line), and a sample path from the linearised Dean--Kawasaki equation at time $T_1:=0.4$ (pink solid line). \emph{Top Right}: same as \emph{Top Left}, but with $N=4096$. \emph{Bottom}: The test functions $\varphi_1$, $\varphi_2$ used for the moment computations (blue solid line, red dotted line). More specifically, $\varphi_1=\rho_0(x)$ while $\varphi_2(\cdot)\approx |\nabla\varphi_1(\cdot,T/4)|^2$.}
\end{figure}

Using a Monte-Carlo approach with $M\gg 1$ realizations, we next computed the centered stochastic moments
\begin{align*}
M^{DK}_{j_1,j_2} := &
\mean{
\left(\rho_h(T_1) - \mathbb{E}[\rho_h(T_1)], \mathcal{I}_h\varphi_{1}\right)_h^{j_1}
\left(\rho_h(T_2) - \mathbb{E}[\rho_h(T_2)], \mathcal{I}_h\varphi_{2}\right)_h^{j_2}
},
\end{align*}
for test functions $\varphi_1$, $\varphi_2$, times $T_1$, $T_2$, and integer exponents $j_1,j_2$ specified below. We then compared these stochastic moments to the corresponding centered stochastic moments of the empirical density $\mu^N$
\begin{align*}
M^{Brownian}_{j_1,j_2} := &
\mean{
\langle \mu_{T_1}^N-\mathbb{E}[\mu_{T_1}^N], \varphi_1\rangle^{j_1}
\langle \mu_{T_2}^N-\mathbb{E}[\mu_{T_2}^N], \varphi_2\rangle^{j_2}
},
\end{align*}
the latter being also computed by a Monte Carlo approximation with $M$ realizations.

We have performed various simulations in order to assess the convergence of the moments with respect to $h$, $N$, and to compare the discretisations to the linearised Dean--Kawasaki model \eqref{b:3} and to the Dean--Kawasaki model \eqref{DeanKawasaki}.

\subsection{Moment error decay (with respect to $h$)}
For two different choices of initial data $\rho_0(x)$, test functions $\varphi_i(x)$, and times $T_i$, the resulting errors 
$$
|M_{j_1,j_2}^{DK}-M_{j_1,j_2}^{Brownian}|
$$ 
have been plotted in Figure~\ref{FigureConvergenceInH} as a function of the discretisation parameter $h$. We clearly observe a convergence rate $O(h^2)$ for the accuracy of the computed moments. 
\begin{figure}[ht!] 
\includegraphics[scale=0.23]{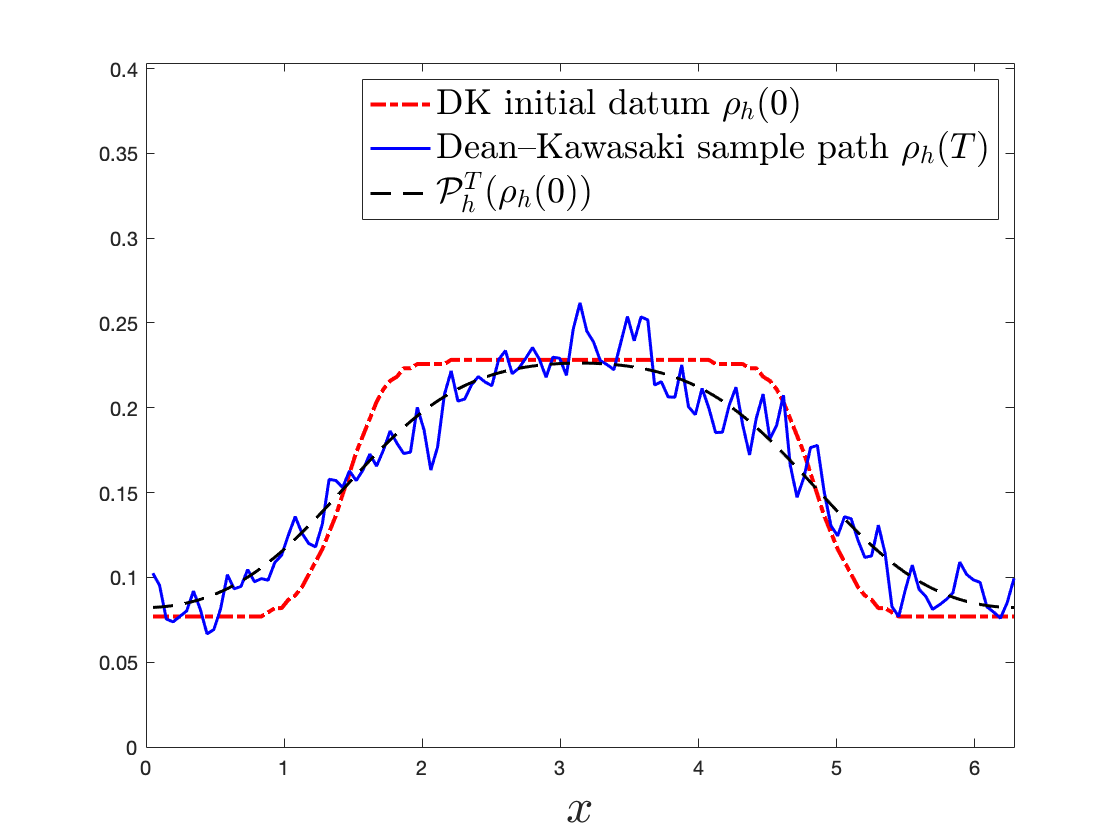}
\\
\includegraphics[scale=0.45]{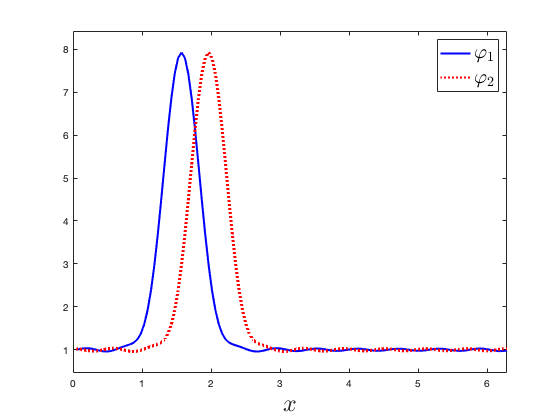}
\caption{\label{FigurePlotSolutionAndTestFunctions1}\emph{Top}: A plot of the initial datum $\rho_0(x):=3-2e^{-\sin^6(x/2)/0.05}$ (dashed red line), its deterministic evolution by the heat equation at time $T_1:=0.4$ (dashed black line), and a sample path from the Dean--Kawasaki equation at time $T_1:=0.4$ for $N:=8211$ particles (blue solid line). \emph{Bottom}: The test functions $\varphi_1$, $\varphi_2$ used for the moment computations (blue solid line, red dotted line).}
\end{figure}

\begin{figure}[ht!] 
\includegraphics[scale=0.230]{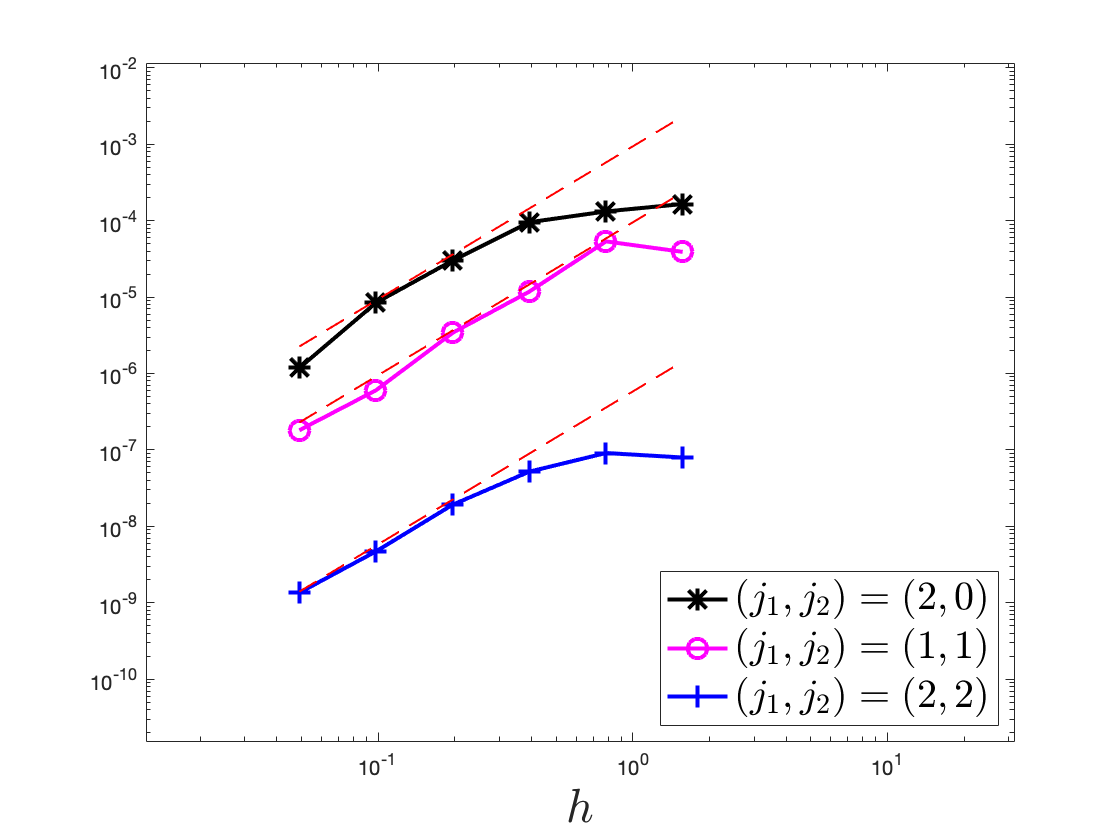}
\\
\includegraphics[scale=0.230]{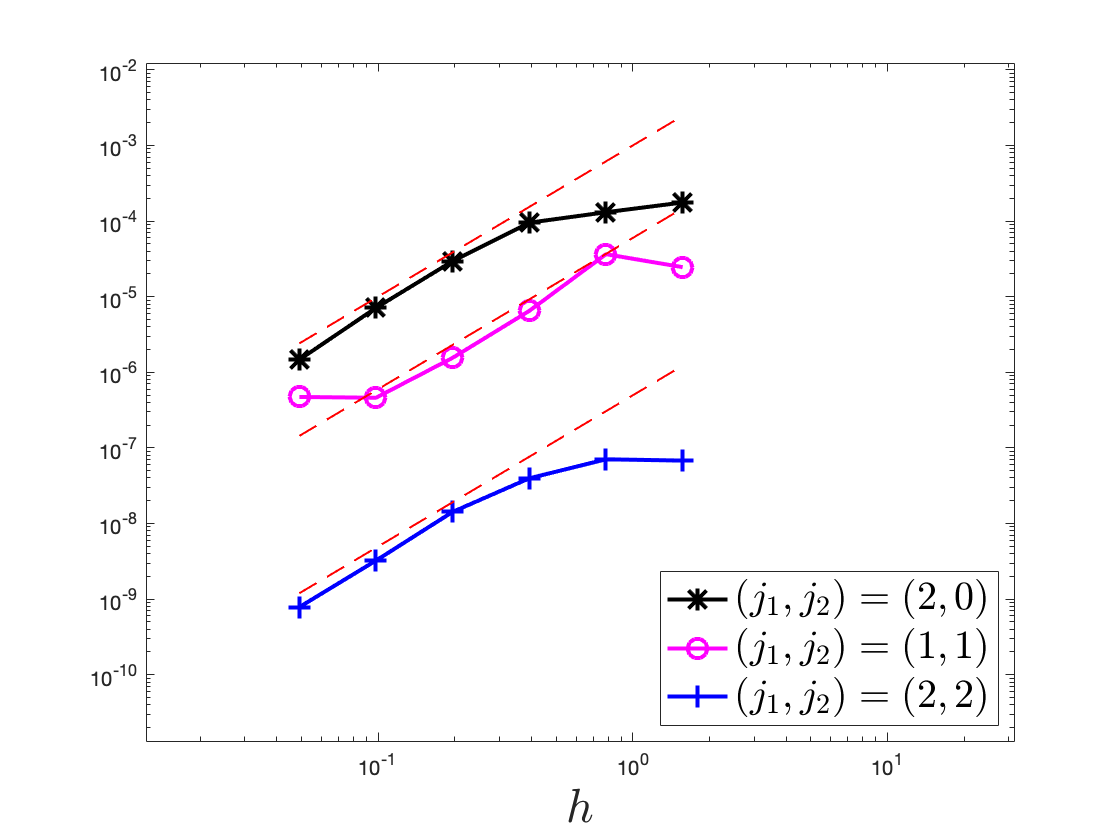}
\caption{\label{FigureConvergenceInH}A log-log plot of the error $|M_{j_1,j_2}^{DK}-M_{j_1,j_2}^{Brownian}|$ in the numerical examples illustrated in Figure~\ref{FigurePlotSolutionAndTestFunctions1} (\emph{top}, with $T_1:=0.4$, $T_2:=0.32$, and particle number $N=8211$) respectively for the numerical examples illustrated in Figure~\ref{FigurePlotSolutionAndTestFunctions2} (\emph{bottom}, with $T_1:=0.4$, $T_2:=0.32$, and particle number $N=524291$). It is clearly visible that (after an initial preasymptotic region) a second-order convergence rate $O(h^2)$ is achieved for all computed moments.}
\end{figure}

\subsection{Moment error decay (with respect to $N$)}
In Figure~\ref{FigureConvergenceInN}, we have plotted the error $|M_{j_1,j_2}^{DK}-M_{j_1,j_2}^{Brownian}|$ as a function of the particle number $N$. We observe that the absolute error decays with the same rate $N^{-(j_1+j_2)/2}$ as the centered moments $M_{j_1,j_2}^{Brownian}$, i.\,e.\ our relative error is basically independent of the particle number $N$ and only depends on the grid size $h$.

\subsection{Comparison with linearised Dean--Kawasaki model \eqref{b:3}}

For the same choice of initial data $\rho_0(x)$, test functions $\varphi_i(x)$, and times $T_i$, and two different choices of $N$, 
we investigate the difference of performance between the time-discretised Dean--Kawasaki model \eqref{eq:4000}-\eqref{eq:4001} and the equivalent scheme associated with the linearised Dean--Kawasaki model \eqref{b:3} (whose discretisation is obtained as a straightforward adaptation of \eqref{eq:4000}-\eqref{eq:4001}). 

More precisely, we have plotted both 
$$
|M_{j_1,j_2}^{DK}-M_{j_1,j_2}^{Brownian}|
$$ 
and  
$$
|M_{j_1,j_2}^{DK,linearised}-M_{j_1,j_2}^{Brownian}|,
$$ 
where $M_{j_1,j_2}^{DK,linearised}$ is the natural counterpart to $M_{j_1,j_2}^{DK}$,
in Figure~\ref{LinearisedCase3_4} as a function of the discretisation parameter $h$. 

We observe that the two models show the same behaviour for the second moment associated with the exponents $(j_1,j_2)=(2,0)$. This is expected, as both models share the same quadratic variation structure of the noise (more explicitly, one can readapt Lemma \ref{thm:1} to the linearised case). On the contrary, the nonlinear model visibly outperforms the linearised model for the higher moment associated with $(j_1,j_2)=(2,1)$. The reason for this is that one can not readapt Proposition \ref{thm:2} to the linearised case, as doing so would result in lower order moments comprising both the Dean--Kawasaki solution and its mean-field limit, thus breaking the very recursive structure of the Proposition.

We have chosen a relatively low number of particles $N$ a particular couple of test functions (with $\varphi_2$ approximately matching the quadratic variation associated with the second test function after some time, i.e., $\varphi_2\approx \nabla|\varphi_1(T/4)|^2$, thus giving non-trivial correlation between $\varphi_1$ and $\varphi_2$) in order to make the difference between the two models more pronounced. Such difference is not completely clear cut though, as one can see for the lowest values of $h$ in the bottom figure. This behaviour is likely caused by:
\begin{itemize}[leftmargin=0.9 cm]
\item the reduced accuracy of the BDF2 integration method for low $h$;
\item in the case of Figure~\ref{LinearisedCase3_4} (\emph{Bottom}), an accuracy saturation.
\end{itemize}

\begin{figure}
\includegraphics[scale=0.2]{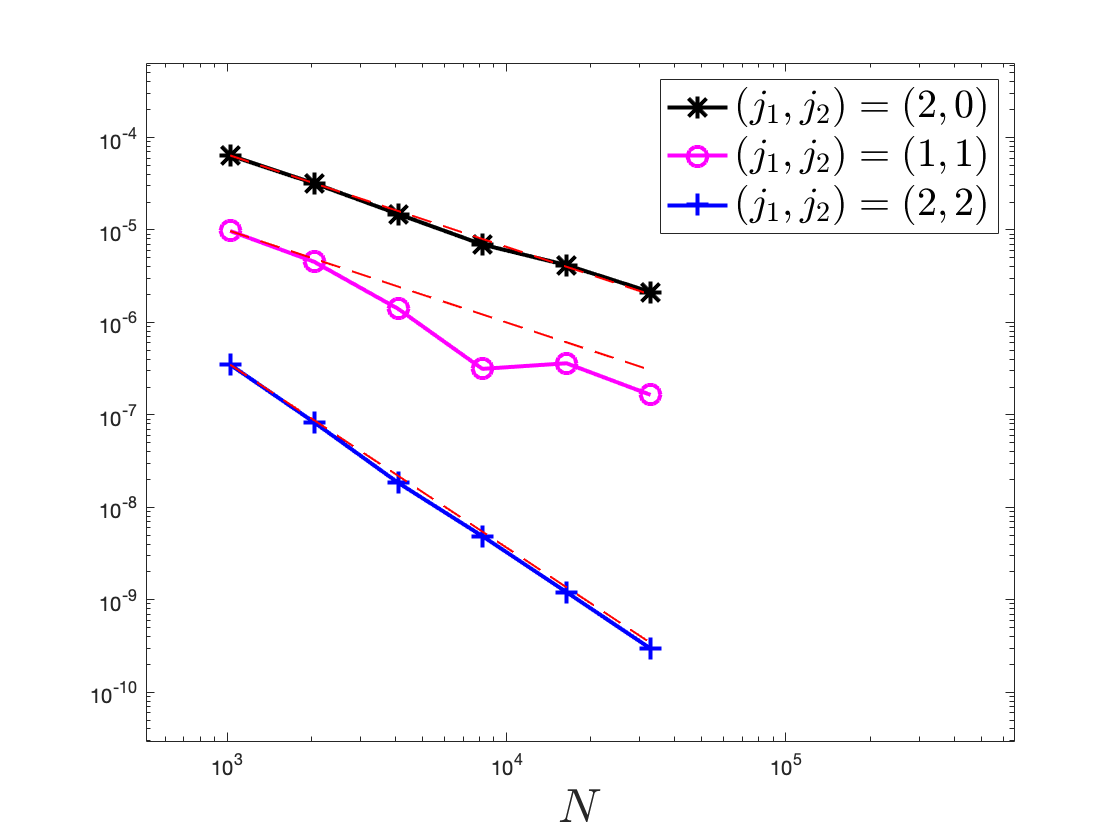}
\caption{\label{FigureConvergenceInN} A log-log plot of the error $|M_{j_1,j_2}^{DK}-M_{j_1,j_2}^{Brownian}|$ for the numerical examples in Figure~\ref{FigurePlotSolutionAndTestFunctions1} for varying values of $N$ (with $T_1:=0.4$, $T_2:=0.32$, and $h=0.098175$). Note that the relative error in the computation of the moments $M_{j_1,j_2}^{Brownian}$ that is achieved by the discretized Dean--Kawasaki equation is basically independent of the particle number $N$: The errors decay essentially uniformly according to the rate $N^{-\frac{j_1+j_2}{2}}$, which coincides with the rate of decay of the moments $M_{j_1,j_2}^{Brownian}$.}
\end{figure}

\begin{figure}[ht!] 
\includegraphics[scale=0.23]{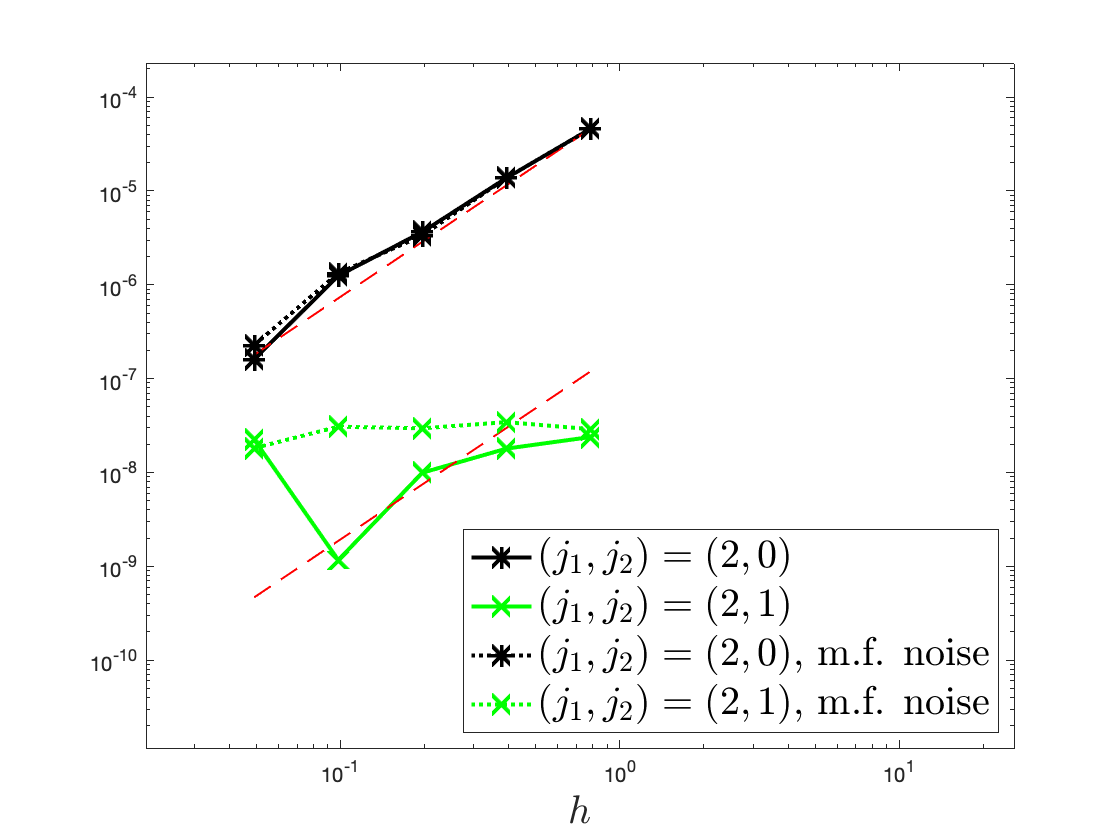}
\\
\includegraphics[scale=0.22]{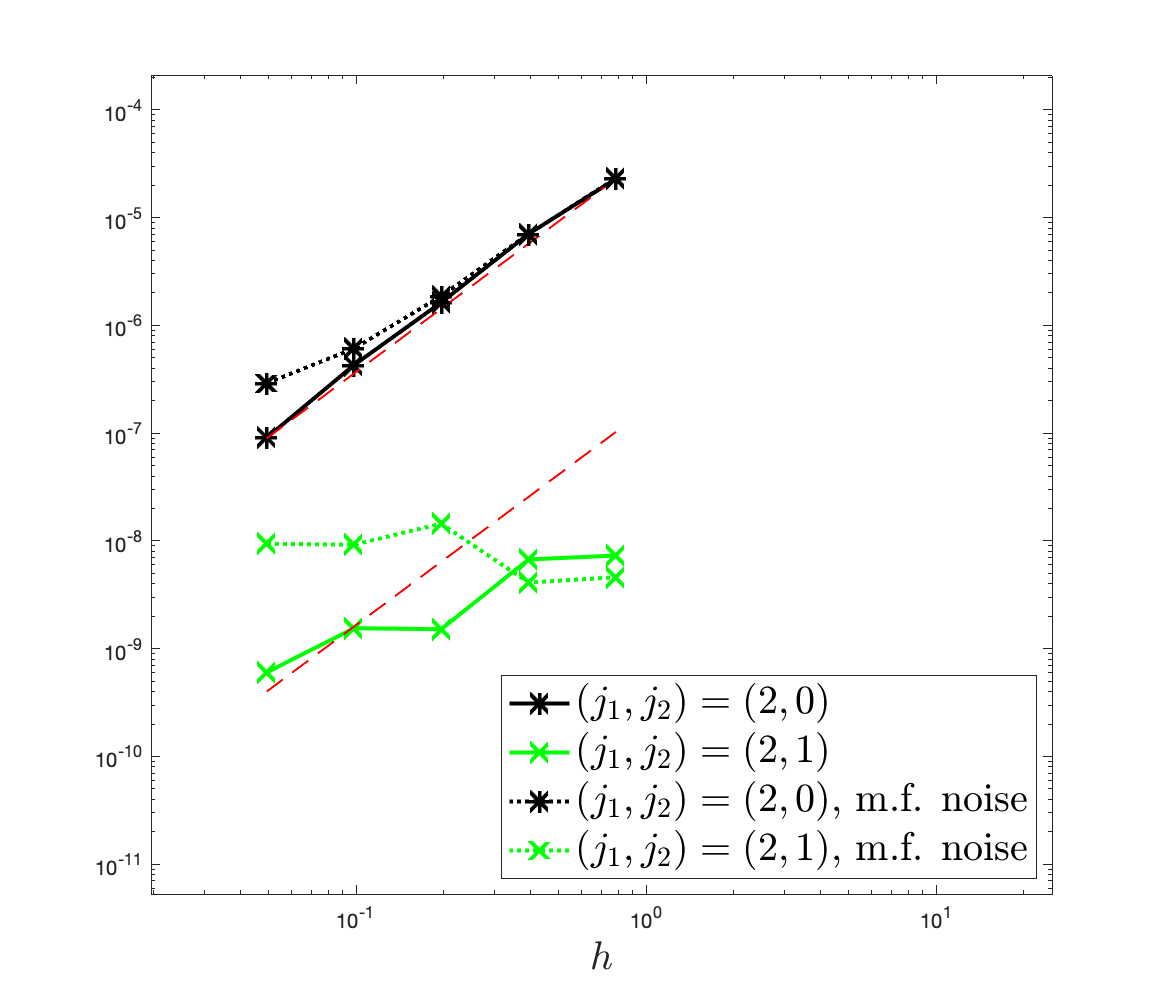}
\caption{\label{LinearisedCase3_4} A log-log plot comparing the error $|M_{j_1,j_2}^{DK}-M_{j_1,j_2}^{Brownian}|$ for the Dean--Kawasaki model (continuous lines) and the linearised Dean--Kawasaki model (dotted lines), in the context of the numerical examples illustrated in Figure~\ref{FigurePlotSolutionAndTestFunctions3} for varying values of $h$ (with $T_1:=0.4$, $T_2:=0.2$, and either $N=2011$ (\emph{Top}) or $N=4096$ (\emph{Bottom})).
We observe that the discretised Dean--Kawasaki model outperforms -- to a good extent -- the linearised version for the moment associated with $(j_1,j_2)=(2,1)$. }
\end{figure}

\appendix

\section{Standard estimates for finite difference discretisation}\label{app:a}

\subsection{Error bounds for continuous and discretised heat flows}\label{ss:approx}

In order to prove the following lemma, we introduce a minimal amount of tools related to Fourier analysis for functions belonging to $[L^2(G_{h,d})]^m$. This is an adaptation of the contents of \cite[Section 2.3]{hackbusch1981}.
Set 
$
I_h:=h^{-1}G_{h,d}=\{-L/2,-L/2+1,\dots,L/2-1,L/2-1\}^d.
$ 
The discrete Fourier transform of $\f{v}_h\in[L^2(G_{h,d})]^m$ is the periodic function 
\begin{align*}
\hat{\f{v}}(\f{\xi}):=h^d\sum_{\f{x}\in G_{h,d}}{\f{v}_h(\f{x})e^{-i\f{x}\cdot\f{\xi}}},\qquad \f{\xi}\in I_h.
\end{align*}
Furthermore, the function $\f{v}_h$ may be reconstructed as
\begin{align*}
\f{v}(\f{x})=\sum_{\f{\xi}\in I_h}{\hat{\f{v}}(\f{\xi})e^{i\f{x}\cdot\f{\xi}}},\qquad \f{x}\in G_{h,d}.
\end{align*}

\begin{lemma}\label{lem:101}
Let $\phi_1$ (respectively, $\phi_2$) be the solution to \eqref{eq:28} with final datum $\phi_1^T=\varphi_1$ (respectively, with final datum $\phi_2^T=\varphi_2$), for some $\varphi_1,\varphi_2\in C^{p+2+\Theta}$, where $\Theta$ is given in \eqref{b:7}. Let $\phi_{1,h}$ (respectively, $\phi_{2,h}$) be the solution of \eqref{eq:29} with final datum $\phi_{1,h}^T=\mathcal{I}_h\varphi_1$ (respectively, with final datum $\phi_{2,h}^T=\mathcal{I}_h\varphi_2$). Assume the validity of Assumption \ref{ass:1}. Then, for $t\leq T$, we have
\begin{align}
\|\phi_i^t-\phi_{i,h}^t\|_h & \leq C\|\varphi_i\|_{C^{p+1}}h^{p+1},\qquad i=1,2,\label{eq:325a}\\
\|\nabla\phi_i^t-\nabla_{h}\phi_{i,h}^t\| & \leq C\|\varphi_i\|_{C^{p+2}}h^{p+1},\qquad i=1,2,\label{eq:325b}\\
\|\nabla_h\phi_{1,h}^t\cdot\nabla_h\phi_{2,h}^t-\nabla\phi_{1}^t\cdot\nabla\phi_{2}^t\| & \leq C\|\varphi_1\|_{C^{p+2+\Theta}}\|\varphi_2\|_{C^{p+2+\Theta}}h^{p+1},\label{eq:49}
\end{align} 
where $C$ is independent of $T$.
\end{lemma}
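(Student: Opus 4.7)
The strategy is to prove the three estimates sequentially via Fourier analysis on the grid $G_{h,d}$, the key input being the symbol consistency implied by Assumption \ref{ass:1}. Writing $\varphi_i = \sum_{\f{n}\in\mathbb{Z}^d} a_{i,\f{n}}e^{i\f{n}\cdot\f{x}}$, we have $\phi_i^t = \sum_{\f{n}} a_{i,\f{n}}e^{-\tfrac{1}{2}(T-t)|\f{n}|^2}e^{i\f{n}\cdot\f{x}}$; correspondingly, denoting by $-m_h(\f{n})\leq 0$ the eigenvalues of $\Delta_h$ on $I_h$, the grid values of $\phi_{i,h}^t$ are given by the same formula with $|\f{n}|^2$ replaced by $m_h(\f{n})$, modulo an aliasing projection from $\mathbb{Z}^d$ onto $I_h$. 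The consistency estimate in Assumption \ref{ass:1} translates into a symbol bound $|m_h(\f{n}) - |\f{n}|^2|\leq Ch^{p+1}|\f{n}|^{p+3}$ for modes away from Nyquist; combined with the nonnegativity $m_h(\f{n})\geq c|\f{n}|^2$ (which follows from symmetry of the stencil, see \eqref{e:1}), this is the only mode-by-mode input needed.

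For \eqref{eq:325a}, the elementary comparison
\begin{align*}
\bigl|e^{-\tfrac{1}{2}(T-t)|\f{n}|^2} - e^{-\tfrac{1}{2}(T-t)m_h(\f{n})}\bigr| \leq C(T-t)\,\bigl||\f{n}|^2-m_h(\f{n})\bigr|\, e^{-c(T-t)|\f{n}|^2}
\end{align*}
combined with the parabolic smoothing inequality $(T-t)|\f{n}|^{p+3}e^{-c(T-t)|\f{n}|^2}\leq C|\f{n}|^{p+1}$ produces, mode by mode, an amplitude $\leq Ch^{p+1}|\f{n}|^{p+1}|a_{i,\f{n}}|$. Summing in $\ell^2$ bounds $\|\phi_i^t-\phi_{i,h}^t\|_h$ by $Ch^{p+1}\|\varphi_i\|_{H^{p+1}}\leq Ch^{p+1}\|\varphi_i\|_{C^{p+1}}$, uniformly in $T$ since the smoothing factor is time-bounded. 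Estimate \eqref{eq:325b} follows by splitting
\begin{align*}
\nabla\phi_i^t - \nabla_h\phi_{i,h}^t = (\mathcal{I}_h\nabla\phi_i^t - \nabla_h\mathcal{I}_h\phi_i^t) + \nabla_h(\mathcal{I}_h\phi_i^t-\phi_{i,h}^t).
\end{align*}
The first summand is the pointwise consistency error \eqref{e:2} applied to $\phi_i^t$ and is bounded by $Ch^{p+1}\|\phi_i^t\|_{C^{p+2}}\leq Ch^{p+1}\|\varphi_i\|_{C^{p+2}}$ via the maximum principle for the continuous heat equation, while the second is handled by the same Fourier argument as in (1) with one extra factor of $|\f{n}|$ (the symbol of $\nabla_h$), absorbed by a further factor of parabolic smoothing at the cost of one additional derivative.

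For \eqref{eq:49}, the telescoping identity
\begin{align*}
\nabla_h\phi_{1,h}^t\cdot\nabla_h\phi_{2,h}^t - \nabla\phi_1^t\cdot\nabla\phi_2^t = (\nabla_h\phi_{1,h}^t-\nabla\phi_1^t)\cdot\nabla_h\phi_{2,h}^t + \nabla\phi_1^t\cdot(\nabla_h\phi_{2,h}^t-\nabla\phi_2^t)
\end{align*}
is estimated in $L^\infty\times L^2$: the $L^2$-type factors are bounded by $Ch^{p+1}\|\varphi_i\|_{C^{p+2}}$ via \eqref{eq:325b}, while the $L^\infty$ factors must be controlled by $C\|\varphi_i\|_{C^{1+\Theta}}$. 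Here the definition of $\Theta$ in \eqref{b:7} becomes crucial: if $\Delta_h$ admits a maximum principle ($\Theta=0$), the discrete backward heat semigroup is $L^\infty$-stable and, combined with the boundedness of the stencil of $\partial_{h,x_\ell}$, yields $\|\nabla_h\phi_{i,h}^t\|_{L^\infty}\leq C\|\varphi_i\|_{C^{1}}$; otherwise ($\Theta = d/2+1$), the discrete Sobolev embedding $H^{d/2+1}(G_{h,d})\hookrightarrow L^\infty(G_{h,d})$ applied to $\nabla_h\phi_{i,h}^t$ is used instead. The main obstacle throughout is the careful treatment of Nyquist-scale modes, where the consistency bound $|m_h(\f{n})-|\f{n}|^2|\lesssim h^{p+1}|\f{n}|^{p+3}$ degrades: this is dealt with by combining the uniform lower bound $m_h(\f{n})\geq c|\f{n}|^2$ with the exponential damping of both the continuous and discrete semigroups, which shows that the high-frequency contributions are already exponentially small irrespectively of the consistency error.
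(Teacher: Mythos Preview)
Your overall Fourier-analytic strategy is the same as the paper's, and the telescoping argument for \eqref{eq:49} together with the $\Theta$-dependent $L^\infty$ control of $\nabla_h\phi_{i,h}^t$ is exactly what the paper does. However, there is a genuine gap in your mode-by-mode estimate.

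You assert that the symmetry encoded in \eqref{e:1} yields a uniform spectral lower bound $m_h(\f{n})\geq c|\f{n}|^2$, and you use this both to produce the factor $e^{-c(T-t)|\f{n}|^2}$ in the exponential comparison and to claim ``exponential damping of the discrete semigroup'' at Nyquist scale. In fact \eqref{e:1} only gives $m_h(\f{n})\geq 0$: it says $-\Delta_h$ is positive semidefinite, nothing more. Under Assumption~\ref{ass:1} as stated, one can take $D_{h,x_\ell}$ to be the centred difference $(2h)^{-1}(f(\cdot+h)-f(\cdot-h))$; the resulting $D^2_{h,x_\ell}$ has symbol $h^{-2}\sin^2(h\xi_\ell)$, which vanishes at the Nyquist frequency $\xi_\ell=\pi/h$ while still being a second-order consistent approximation of $\partial_{x_\ell}^2$. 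For this admissible choice the discrete semigroup does not damp the Nyquist mode at all, and both of your uses of the lower bound fail.

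The paper avoids this entirely. Instead of a mean-value estimate with a damping factor, it uses the secant-slope monotonicity of the exponential (convexity) together with $m_h\geq 0$ to get, for $a=\tfrac12(T-t)|\f{\xi}|^2$ and $b=\tfrac12(T-t)m_h(\f{\xi})\geq 0$,
\[
\frac{|e^{-a}-e^{-b}|}{|a-b|}\leq \frac{1-e^{-a}}{a}\leq \frac{1}{a},
\]
which gives $|e^{-a}-e^{-b}|\leq C|\f{\xi}|^{p+1}h^{p+1}$ \emph{uniformly over all modes}, with no regime split. The high-frequency contribution is then controlled not by semigroup damping but by the discrete $H^{p+1}$-norm of $\mathcal I_h\varphi_i$, which is bounded by $\|\varphi_i\|_{C^{p+1}}$. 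The paper also handles the aliasing of $\mathcal I_h\phi_i^t$ (the folding $\mathbb Z^d=I_h+L\mathbb Z^d$) explicitly; you mention it but do not indicate how it interacts with the estimate, and it accounts for the second group of terms in the paper's argument.
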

\begin{proof}
We recall the relation $\mathbb{N}\ni L=2\pi/h$ and definition $I_h=h^{-1}G_{h,d}=\{-L/2,-L/2+1,\dots,L/2-1,L/2-1\}^d$. It is easy to use the continuous and discrete backwards heat equations \eqref{eq:28}--\eqref{eq:29} to deduce that the Fourier coefficients of $\phi_i^t$ and $\phi_{i,h}^t$, $i\in\{1,2\}$, are
\begin{align}
\hat{\phi}_i^t(\f{\xi})  & = \frac{1}{\pi^{d/2}}\int_{\mathbb{T}^d}{\phi_i^t(\f{y})e^{-i\f{y}\cdot\f{\xi}}\m \f{y}} = e^{-(|\f{\xi}|^2/2)(T-t)}\hat{\varphi}_{i}(\f{\xi}), \qquad \f{\xi}\in \mathbb{Z}^d,\label{e:2a}\\
\hat{\phi}_{i,h}^t(\f{\xi}) & = h^d\sum_{\f{x}\in G_{h,d}}{\phi_{i,h}^t(\f{x})e^{-i\f{x}\cdot\f{\xi}}} = e^{-P(h,\f{\xi})(T-t)}\widehat{\mathcal{I}_h\varphi_i}({\f{\xi}}),\qquad \f{\xi}\in I_h\label{e:2b},
\end{align}
for some functional $P(h,\f{\xi})$. As the discrete Laplacian $\Delta_h$ is a $(p+1)$-th order approximation of the true Laplacian with order $p+1$, it is easy to see that 
\begin{align}\label{eq:811}
\left\|\f{\xi}|^2/2-P(h,\f{\xi})\right| \leq |\f{\xi}|^{p+3}h^{p+1}.
\end{align}
Furthermore, since $\Delta_h$ is a symmetric finite difference operator, it is easy to see that $P(h,d)$ is nonnegative. This fact, together with the convexity of the exponential function (which in turn implies the monotonicity of the ratio $(e^{x}-e^{y})/(x-y)$ in either one of the two variables, provided the other one is kept fixed) gives
\begin{align*}
\frac{\left|e^{-(|\f{\xi}|^2/2)(T-t)}-e^{-P(h,\f{\xi})(T-t)}\right|}{\left|\left(-(|\f{\xi}|^2/2)+P(h,\f{\xi})\right)(T-t)\right|} & \leq \frac{\left|e^{-(|\f{\xi}|^2/2)(T-t)}-1\right|}{(|\f{\xi}|^2/2)(T-t)}\leq \frac{1}{(|\f{\xi}|^2/2)(T-t)},
\end{align*}
and therefore
\begin{align}\label{eq:324}
&\left|e^{-(|\f{\xi}|^2/2)(T-t)}-e^{-P(h,\f{\xi})(T-t)}\right| \leq \frac{\left|\left(-\frac{|\f{\xi}|^2}{2}+P(h,\f{\xi})\right)(T-t)\right|}{(|\f{\xi}|^2/2)(T-t)} \stackrel{\eqref{eq:811}}{\leq }C|\f{\xi}|^{p+1}h^{p+1}.
\end{align}
We can deduce that the discrete Fourier expansion of $\mathcal{I}_h\phi_i^t$ from \eqref{e:2a} is
\begin{align}
\mathcal{I}_h\phi_i^t(\f{x})=\sum_{\f{\xi}\in I_h}{\left(\sum_{\f{z}\in\mathbb{Z}^d}{\hat{\phi}_i^t(\f{\xi}+L\f{z})}\right)e^{i\f{x}\cdot\f{\xi}}}=\sum_{\f{\xi}\in I_h}{\widehat{\mathcal{I}_h\phi_i^t}(\f{\xi})e^{i\f{x}\cdot\f{\xi}}},
\end{align}
where we have also used  the fact that $\mathbb{Z}^d = I_h + L\mathbb{Z}^d$.
We deduce
\begin{align*}
\|\mathcal{I}_h\phi_i^t-\phi_{i,h}^t\|^2_h &  = \sum_{\f{\xi}\in I_h}{\left| \hat{\phi}_{i,h}^t(\f{\xi}) - \widehat{\mathcal{I}_h\phi_i^t}(\f{\xi})\right|^2}\nonumber\\
& \stackrel{\mathclap{\eqref{e:2a}\eqref{e:2b}}}{\leq}\quad C\sum_{\f{\xi}\in I_h}{\left|e^{-(|\f{\xi}|^2/2)(T-t)}\left(\sum_{\f{z}\in\mathbb{Z}^d}{\hat{\varphi}_i(\f{\xi}+L\f{z})}\right)-e^{-P(h,\f{\xi})(T-t)}\widehat{\phi_{i,h}^T}({\f{\xi}})\right|^2}\nonumber\\
& \quad + C\sum_{\f{\xi}\in I_h}{\left|\sum_{\f{z}\in\mathbb{Z}^d\setminus \f{0}}{\left(e^{-(|\f{\xi}|^2/2)(T-t)}-e^{-(|\f{\xi}+L\f{z}|^2/2)(T-t)}\right)\hat{\varphi}_i(\f{\xi}+L\f{z})}\right|^2}.
\end{align*}
Since $\mathcal{I}_h\varphi_i=\phi_{i,h}^T$, we carry on in and write
\begin{align}\label{e:911}
 \|\phi_i^t-\phi_{i,h}^t\|^2_h & \leq C\sum_{\f{\xi}\in I_h}{\left|e^{-(|\f{\xi}|^2/2)(T-t)}-e^{P(h,\f{\xi})(T-t)}\right|^2|\widehat{\mathcal{I}_h\varphi}_{i}(\f{\xi})|^2} \nonumber\\
& \quad + C\sum_{\f{\xi}\in I_h}{\left|\sum_{\f{z}\in\mathbb{Z}^d\setminus \f{0}}{\hat{\varphi}_{i}(\f{\xi}+L\f{z})}\right|^2}\nonumber\\
& \stackrel{\mathclap{\eqref{eq:324}}}{\leq} Ch^{2(p+1)} \sum_{\f{\xi}\in I_h}{(1+|\f{\xi}|^2)^{p+1}|\widehat{\mathcal{I}_h\varphi}_{i}(\f{\xi})|^2} \nonumber\\
& \quad + C\sum_{\f{\xi}\in I_h}{\left(\sum_{\f{z}\in\mathbb{Z}^d\setminus \f{0}}{|\hat{\varphi}_{i}(\f{\xi}+L\f{z})|^2(1+|\f{\xi}+L\f{z}|^2)^{p+1}}\right)}\nonumber\\
& \quad \quad \quad \quad \times\left(\sum_{\f{z}\in\mathbb{Z}^d\setminus \f{0}}{(1+|\f{\xi}+L\f{z}|^2)^{-(p+1)}}\right).
\end{align}
We estimate 
\begin{align*}
\sum_{\f{z}\in\mathbb{Z}^d\setminus \f{0}}{(1+|\f{\xi}+L\f{z}|^2)^{-(p+1)}} & \leq \sum_{\f{z}\in\mathbb{Z}^d\setminus \f{0}}{(2L^2|\f{z}|^2)^{-(p+1)}} \leq Ch^{2(p+1)}\sum_{\f{z}\in\mathbb{Z}^d\setminus \f{0}}{|\f{z}|^{-2(p+1)}}\\
& \leq Ch^{2(p+1)},
\end{align*}
where the final step is justified by 
\begin{align}\label{e:913}
\int_{\mathbb{R}^d\setminus \{\f{z}\in\mathbb{R}^d\colon|\f{z}|\geq 1\}}{\!\!|\f{z}|^{-2(p+1)}\m \f{z}}=C(d)\int_{1}^{\infty}{|r|^{-2(p+1)+d-1}\m r}\propto \left.r^{-2(p+1)+d}\right|^{\infty}_{1}< \infty,
\end{align}
which is valid since $2(p+1)>d$, as $d\leq 3$ and $p\geq 1$.
We continue in \eqref{e:911} as 
\begin{align*}
\|\phi_i^t-\phi_{i,h}^t\|^2_h & \leq Ch^{2(p+1)} \sum_{\f{\xi}\in I_h}{(1+|\f{\xi}|^2)^{p+1}|\widehat{\mathcal{I}_h\varphi_i}(\f{\xi})|^2} \nonumber\\
& \quad + Ch^{2(p+1)}\sum_{\f{\xi}\in \mathbb{Z}^d}{|\hat{\varphi}_{i}(\f{\xi})|^2(1+|\f{\xi}|^2)^{p+1}}\nonumber\\
& \leq Ch^{2(p+1)}\{\|\mathcal{I}_h\varphi_i\|_{p+1,h}+\|\varphi_i\|_{C^{p+1}}\} \leq Ch^{2(p+1)}\|\varphi_i\|_{C^{p+1}},
\end{align*}
and \eqref{eq:325a} is proved.

We adapt the arguments carried out so far and write 
\begin{align}\label{e:914}
& \|\nabla\phi_i^t-\nabla_h\phi_{i,h}^t\|^2 \nonumber\\ 
& \leq  C\sum_{\f{\xi}\in I_h}{\left| \widehat{\nabla_h\mathcal{I}_h\phi_i^t}(\f{\xi}) - \widehat{\nabla\phi^t}_{\!\!i,h}(\f{\xi})\right|^2}\nonumber\\
& \stackrel{\mathclap{\eqref{e:2a}\eqref{e:2b}}}{\leq} \quad C\sum_{\f{\xi}\in I_h}{\left|e^{-(|\f{\xi}|^2/2)(T-t)}\left(\sum_{\f{z}\in\mathbb{Z}^d}{\widehat{\nabla\varphi}_i(\f{\xi}+L\f{z})}\right)-e^{P(h,\f{\xi})(T-t)}\widehat{\nabla_h\mathcal{I}_h\varphi_i}(\f{\xi})\right|^2}\nonumber\\
& \quad\quad + C\sum_{\f{\xi}\in I_h}{\left|\sum_{\f{z}\in\mathbb{Z}^d\setminus \f{0}}{\left(e^{-(|\f{\xi}|^2/2)((T-t)}-e^{-(|\f{\xi}+L\f{z}|^2/2)(T-t)}\right)\widehat{\nabla\varphi}_{i}(\f{\xi}+L\f{z})}\right|^2},
\end{align}
After simple algebraic rearrangements in \eqref{e:914}, we get
\begin{align}\label{e:912}
&  \|\nabla\phi_i^t-\nabla_h\phi_{i,h}^t\|^2 \nonumber\\
  & \quad\leq C\sum_{\f{\xi}\in I_h}{\left|e^{-(|\f{\xi}|^2/2)(T-t)}-e^{P(h,\f{\xi})(T-t)}\right|^2\left|\widehat{\nabla_h\mathcal{I}_h\varphi_i}(\f{\xi})\right|^2}\nonumber\\
& \quad\quad + C\sum_{\f{\xi}\in I_h}{\left|e^{-(|\f{\xi}|^2/2)(T-t)}\right|^2\left|\left(\sum_{\f{z}\in\mathbb{Z}^d}{\widehat{\nabla\varphi}_i(\f{\xi}+L\f{z})}\right)-\widehat{\nabla_h\mathcal{I}_h\varphi_i}(\f{\xi})\right|^2}\nonumber\\
& \quad\quad + C\sum_{\f{\xi}\in I_h}{\left|\sum_{\f{z}\in\mathbb{Z}^d\setminus \f{0}}{\left|\widehat{\nabla\varphi}_{i}(\f{\xi}+L\f{z})\right|}\right|^2}=: T_1+T_2+T_3.
\end{align}
The term $T_1$ is estimated using \eqref{eq:324}, giving $T_1\leq Ch^{2(p+1)}\|\varphi_i\|^2_{C^{p+2}}$. The term $T_2$ is estimated by relying on \eqref{e:2}, giving $T_2\leq Ch^{2(p+1)}\|\varphi_i\|^2_{C^{p+1}}$. As for $T_3$, we rely on the fact that $\widehat{\nabla\varphi}_{i}(\f{\xi}+L\f{z})=-i\f{\xi}\hat{\varphi}_{i}(\f{\xi}+L\f{z})$ and write
\begin{align*}
T_3 & \leq C\sum_{\f{\xi}\in I_h}{\left(\sum_{\f{z}\in\mathbb{Z}^d\setminus \f{0}}{|\hat{\varphi}_{i}(\f{\xi}+L\f{z})|^2(1+|\f{\xi}+L\f{z}|^2)^{p+2}}\right)}\\
& \quad \quad \times \left(\sum_{\f{z}\in\mathbb{Z}^d\setminus \f{0}}{(1+|\f{\xi}+L\f{z}|^2)^{-(p+1)}}\right) \\
& \stackrel{\mathclap{\eqref{e:913}}}{\leq} Ch^{2(p+1)}\sum_{\f{\xi}\in I_h}{\left(\sum_{\f{z}\in\mathbb{Z}^d\setminus \f{0}}{|\hat{\varphi}_{i}(\f{\xi}+L\f{z})|^2(1+|\f{\xi}+L\f{z}|^2)^{p+2}}\right)}\\
& \leq Ch^{2(p+1)}\sum_{\f{\xi}\in\mathbb{Z}^d}{|\hat{\varphi}_{i}(\f{\xi})|^2(1+|\f{\xi}|^2)^{p+2}} \leq Ch^{2(p+1)}\|\varphi_i\|_{C^{p+2}},
\end{align*}
and \eqref{eq:325b} is proved.
To prove \eqref{eq:49}, we write 
\begin{align*}
& \left\|\nabla_h\phi_{1,h}^t\cdot\nabla_h\phi_{2,h}^t-\nabla\phi_1^t\cdot\nabla\phi_2^t\right\|\\
& \leq \left\|\left(\nabla_h\phi_{1,h}^t-\nabla\phi_1^t\right)\cdot\nabla_h\phi_{2,h}^t\right\| + \left\|\left(\nabla_h\phi_{2,h}^t-\nabla\phi_2^t\right)\cdot\nabla\phi_{1}^t\right\| =: T_4+T_5.
\end{align*}
Now let $\phi$ be the solution to \eqref{eq:29} with final datum $\phi^T=\varphi\in C^{1+\Theta}$.  
If \eqref{eq:29} admits a discrete maximum principle, then  
$$
\max_{t\in[0,T]}\{\|\nabla_h\phi_{h}^t\|_{\infty}\} \leq \|\nabla_h\mathcal{I}_h\varphi\|_{\infty} \leq \|\varphi\|_{C^{1}}. 
$$
If \eqref{eq:29} does not admit a discrete maximum principle, then we rely on the Sobolev embedding $H^{s}\subset C^0$, where $s=d/2+1$, and get
$$
\max_{t\in[0,T]}\{\|\nabla_h\phi_{h}^t\|_{\infty}\} \leq \|\varphi\|_{C^{2+d/2}}.
$$ 
and the last two expressions can be summarised as 
\begin{align}\label{5005}
\max_{t\in[0,T]}\{\|\nabla_h\phi_{h}^t\|_{\infty}\} \leq \|\varphi\|_{C^{1+\Theta}}
\end{align}
We focus on $T_4$. It is easy to see that
\begin{align*}
T_4\leq \left\|\nabla_h\phi_{1,h}^t-\nabla\phi_1^t\right\|\|\nabla_h\phi_{2,h}^t\|_{\infty}\stackrel{\eqref{eq:325b}\eqref{5005}}{\leq} C\|\varphi_1\|_{C^{p+2}}\|\varphi_2\|_{C^{1+\Theta}}h^{p+1}.
\end{align*}
The estimate for $T_5$ is even more straightforward, and it reads 
\begin{align*}
T_5\leq \left\|\nabla_h\phi_{2,h}^t-\nabla\phi_2^t\right\|\|\nabla\phi_{1}^t\|_{\infty}\stackrel{\eqref{eq:325b}}{\leq} C\|\varphi_2\|_{C^{p+2}}\|\varphi_1\|_{C^{1}}h^{p+1},
\end{align*}
and \eqref{eq:49} is proved. 
\end{proof}

\subsection{Stretched exponential moment bounds for the Dean--Kawasaki solution and the particle system}\label{ss:mom}

We compute the It\^o differential of the quantities in \eqref{e:25}.
\begin{lemma}[It\^o differential for moments $\mathcal{S}_N^{\boldsymbol{j}}$ (Dean--Kawasaki model) and $\mathcal{T}_N^{\boldsymbol{j}}$ (Brownian particles)]\label{lem:10}
Fix $M\in\mathbb{N}$, a multi-index $\boldsymbol{j}=(j_1,\dots,j_M)$, and a set of test functions $\boldsymbol{\varphi}=(\varphi_1,\dots,\varphi_M)\in\left[C^{2}\right]^M$. For any $(k,l)\in\{1,\dots,M\}$, denote by $\boldsymbol{j}^{kl}$ the vector $\boldsymbol{j}$ with both $j_k$ and $j_l$ decreased by one unit (if $k=l$, then $j_k$ is understood to be reduced by two units). For any $k\in\{1,\dots,M\}$, denote by $\boldsymbol{j}^k$ the vector $\boldsymbol{j}$ with $j_k$ decreased by one unit. Let $\rho_h$ be as given in Definition \ref{def:1}. We recall the following definitions 
\begin{align*}
\mathcal{S}^{\boldsymbol{j}}_N(\mathcal{I}_h\boldsymbol{\varphi},T,t) & := \prod_{m=1}^{M}{\mathcal{S}^{j_m}_N(\mathcal{I}_h\varphi_{m},T,t)}= \prod_{m=1}^{M}{\left\{(\rho_h(t),\phi_{m,h}^t)_h-(\rho_h(0),\phi_{m,h}^0)_h\right\}^{j_m}},\\
 \mathcal{T}^{\boldsymbol{j}}_N(\boldsymbol{\varphi},T,t) & := \prod_{m=1}^{M}{\mathcal{T}^{j_m}_N(\varphi_{m},T,t)} = \prod_{m=1}^{M}{\left\{\langle \mu^N_t,\phi_{m}^t\rangle - \langle \mu^N_0,\phi_{m}^0\rangle \right\}^{j_m}}
\end{align*}
from Subsection \ref{ss:not}. Then 
\begin{subequations}
\begin{align}\label{e:26aa}
& \emph{\m} \mathcal{S}^{\boldsymbol{j}}_N(\mathcal{I}_h\boldsymbol{\varphi},T,t) \nonumber \\
&\quad  = -N^{-1/2}\sum_{m=1}^{m}{j_m \mathcal{S}^{\boldsymbol{j}^m}_N(\mathcal{I}_h\boldsymbol{\varphi},T,t)\sum_{(\f{y},\ell)}{(\mathcal{F}_\rho(t) \f{e}^d_{h,\f{y},\ell},\nabla_h \phi_{i,h}^t)_h\emph{\m} \beta_{\f{y},\ell}}}\nonumber\\
&\quad\quad + N^{-1}\sum_{k,l=1}^{M}{\frac{(j_k-\delta_{kl})j_l}{2}\mathcal{S}^{\boldsymbol{j}^{kl}}_N(\mathcal{I}_h\boldsymbol{\varphi},T,t)(\rho_h^+(t),\nabla_h \phi_{k,h}^t\cdot\nabla_h \phi_{l,h}^t)_h}\emph{\m t},
\intertext{and}
\label{e:26bb}
& \emph{\m} \mathcal{T}^{\boldsymbol{j}}_N(\boldsymbol{\varphi},T,t) \nonumber\\
 & \quad = -\sum_{m=1}^{M}{j_m\mathcal{T}^{\boldsymbol{j}^m}(\boldsymbol{\varphi},T,t)\left[N^{-1}\sum_{k=1}^{N}{\nabla\phi_m^t(\f{w}_k(t))\cdot \emph{\m} \f{w}_k(t)}\right]}\nonumber\\
& \quad\quad + N^{-1}\sum_{k,l=1}^{M}{\frac{(j_k-\delta_{kl})j_l}{2}\mathcal{T}^{\boldsymbol{j}^{kl}}_N(\boldsymbol{\varphi},T,t)\left[N^{-1}\sum_{r=1}^{N}{\nabla\phi_k^t(\f{w}_r(t))\cdot\nabla\phi_l^t(\f{w}_r(t))}\right]\emph{\m} t}.
\end{align}
\end{subequations}
\end{lemma}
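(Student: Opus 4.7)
The strategy is a two-stage application of It\^o's formula: first to the individual factors $\mathcal{S}_N(\mathcal{I}_h\varphi_m,T,t)$ and $\mathcal{T}_N(\varphi_m,T,t)$, then to the product $f(x_1,\dots,x_M) = \prod_m x_m^{j_m}$. The key point is that the backward (discrete) heat equations \eqref{eq:28}--\eqref{eq:29} satisfied by $\phi_m^t$ and $\phi_{m,h}^t$ are designed precisely to kill the drift terms arising from the (discrete) Laplacian in the evolution of $\rho_h$ and of the Brownian particles; thus each individual $\mathcal{S}_N(\mathcal{I}_h\varphi_m,T,t)$ and $\mathcal{T}_N(\varphi_m,T,t)$ will be a pure martingale.

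\textbf{Step 1 (drift cancellation for single factors).} For $\mathcal{S}_N$, I apply It\^o's formula to $t \mapsto (\rho_h(t),\phi_{m,h}^t)_h$ using \eqref{e:3}, the symmetry of $\Delta_h$ (which follows from \eqref{e:1}), and the backward discrete heat equation \eqref{eq:29} for $\phi_{m,h}^t$. The two contributions $\tfrac{1}{2}(\Delta_h \rho_h,\phi_{m,h}^t)_h\,\m t$ and $(\rho_h,\partial_t \phi_{m,h}^t)_h\,\m t = -\tfrac{1}{2}(\rho_h,\Delta_h \phi_{m,h}^t)_h\,\m t$ cancel, leaving
\begin{align*}
\m \mathcal{S}_N(\mathcal{I}_h\varphi_m,T,t) = -N^{-1/2}\sum_{(\f{y},\ell)}\bigl(\mathcal{F}_\rho(t)\f{e}^d_{h,\f{y},\ell},\nabla_h \phi_{m,h}^t\bigr)_h\,\m\beta_{(\f{y},\ell)}.
\end{align*}
For $\mathcal{T}_N$, I apply It\^o's formula to $\phi_m^t(\f{w}_k(t))$: the contribution $\partial_t\phi_m^t + \tfrac{1}{2}\Delta \phi_m^t$ vanishes by \eqref{eq:28}, giving $\m \mathcal{T}_N(\varphi_m,T,t) = N^{-1}\sum_k \nabla \phi_m^t(\f{w}_k(t))\cdot \m \f{w}_k(t)$ (up to the sign convention used in the statement).

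\textbf{Step 2 (cross-variations).} From Step~1 and the standard Brownian cross-variation $\m\beta_{(\f{y},\ell)}\m\beta_{(\f{y}',\ell')} = \delta_{\f{y}\f{y}'}\delta_{\ell\ell'}\,\m t$, together with the Parseval identity in $[L^2(G_{h,d})]^d$ with respect to the orthonormal basis $\{\f{e}^d_{h,\f{y},\ell}\}$ --- exactly as carried out in \eqref{e:7b}--\eqref{e:7} of the paper --- I obtain
\begin{align*}
\m\bigl\langle\mathcal{S}_N(\mathcal{I}_h\varphi_k,T,\cdot),\mathcal{S}_N(\mathcal{I}_h\varphi_l,T,\cdot)\bigr\rangle_t = N^{-1}\bigl(\rho_h^+(t),\nabla_h\phi_{k,h}^t\cdot \nabla_h\phi_{l,h}^t\bigr)_h\,\m t.
\end{align*}
For the particle side, independence of the $N$ Brownian motions $\{\f{w}_k\}$ yields
\begin{align*}
\m\bigl\langle\mathcal{T}_N(\varphi_k,T,\cdot),\mathcal{T}_N(\varphi_l,T,\cdot)\bigr\rangle_t = N^{-2}\sum_{r=1}^{N}\nabla \phi_k^t(\f{w}_r(t))\cdot \nabla\phi_l^t(\f{w}_r(t))\,\m t,
\end{align*}
which after pulling out a factor $N^{-1}$ matches the $N^{-1}[N^{-1}\sum_r \cdots]$ structure in \eqref{e:26bb}.

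\textbf{Step 3 (It\^o's formula on the product).} I now apply the multivariate It\^o formula to the polynomial $f(x_1,\dots,x_M) = \prod_{m=1}^{M} x_m^{j_m}$ evaluated on the $M$-vector of semimartingales $(\mathcal{S}_N(\mathcal{I}_h\varphi_m,T,t))_m$ (respectively $(\mathcal{T}_N(\varphi_m,T,t))_m$). The first-order derivatives are $\partial_m f = j_m\, (x\text{-product with }j_m\to j_m-1) = j_m \cdot \prod_n x_n^{(\boldsymbol{j}^m)_n}$, contributing the martingale term (which is drift-free by Step~1). The second-order derivatives give, uniformly in whether $k=l$ or $k\neq l$,
\begin{align*}
\partial_k \partial_l f = (j_k - \delta_{kl}) j_l \prod_n x_n^{(\boldsymbol{j}^{kl})_n},
\end{align*}
since for $k\neq l$ one gets $j_k j_l$ and for $k=l$ one gets $j_k(j_k-1)$, both encoded by $(j_k-\delta_{kl})j_l$. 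The $\tfrac{1}{2}$ from It\^o combined with the cross-variation identities from Step~2 then produces precisely the drift terms displayed in \eqref{e:26aa} and \eqref{e:26bb}.

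No serious obstacle is anticipated: the calculation is a routine application of It\^o's formula, with the only care needed being the discrete Parseval bookkeeping in Step~2 (already done in the paper in \eqref{e:7b}--\eqref{e:7}) and the combinatorial identification of the factor $(j_k-\delta_{kl})j_l$ for $\partial_k\partial_l \prod_m x_m^{j_m}$.
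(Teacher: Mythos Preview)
Your proposal is correct and follows essentially the same approach as the paper: drift cancellation via the backward (discrete) heat equations, cross-variation identification via the Parseval computation \eqref{e:7b}--\eqref{e:7}, and then It\^o's formula for the polynomial. The only cosmetic difference is that the paper organizes Step~3 as an induction on $M$ (first $M=1,\,j_1=1$, then $M=1,\,j_1>1$, then the product of $\mathcal{S}^{\tilde{\boldsymbol{j}}}_N$ with a new factor $\mathcal{S}^{j_{M+1}}$), whereas you apply the multivariate It\^o formula to $f(x)=\prod_m x_m^{j_m}$ in one shot; the content is identical.
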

\begin{proof} All differentials in this proof are with respect to the variable $t$.  
We prove \eqref{e:26aa} in three steps.

\emph{Step 1: Case $M=|\boldsymbol{j}|_1=1$}. We just need to compute the differential for $(\rho_h(t),\phi_{1,h}^t)_h-(\rho_h(0),\phi_{1,h}^0)_h$. We use an $L^2(G_{h,d})$-expansion and the It\^o formula to deduce
\begin{align}\label{e:30}
& \m \left\{(\rho_h(t),\phi_{1,h}^t)_h-(\rho_h(0),\phi_{1,h}^0)_h\right\}\nonumber\\
& \quad = \m \sum_{\f{x}}{(\rho_h(t),e_{\f{x}})_h(\phi_{1,h}^t,e_{\f{x}})_h} \nonumber\\
& \quad = \sum_{\f{x}}{\m \left[(\rho_h(t),e_{\f{x}})_h\right](\phi_{1,h}^t,e_{\f{x}})_h} + \sum_{\f{x}}{(\rho_h(t),e_{\f{x}})_h\m (\phi_{1,h}^t,e_{\f{x}})_h}\nonumber\\
& \quad\stackrel{\mathclap{\eqref{eq:29}\eqref{e:3}}}{=} \quad \sum_{\f{x}}{\left(\frac{1}{2}\Delta \rho_h(t),e_{\f{x}}\right)_h(\phi_{1,h}^t,e_{\f{x}})_h\m t} + \sum_{\f{x}}{(\rho_h(t),e_{\f{x}})_h \left(-\frac{1}{2}\phi_{1,h}^t,e_{\f{x}}\right)_h\m t}\nonumber\\
& \quad\quad - N^{-1/2}\sum_{(\f{y},\ell)}{(\mathcal{F}_\rho(s) \f{e}^d_{h,\f{y},\ell},\nabla_h \phi_{1,h}^t)\m \beta_{(\f{y},\ell)}}\nonumber\\
&\quad \stackrel{\mathclap{\eqref{e:1}}}{=} - N^{-1/2}\sum_{(\f{y},\ell)}{(\mathcal{F}_\rho(t) \f{e}^d_{h,\f{y},\ell},\nabla_h \phi_{1,h}^t)\m \beta_{(\f{y},\ell)}}.
\end{align}
\emph{Step 2: Case $M=1$, $|\boldsymbol{j}|_1=j_1>1$}. The It\^o formula applied to composition of the function $z\mapsto z^{j}$ with the process $(\rho_h,\phi_{1,h})_h$ gives
\begin{align}\label{e:31}
&\m \left\{(\rho_h(t),\phi_{1,h}^t)_h-(\rho_h(0),\phi_{1,h}^0)_h\right\}^{j_1}\nonumber\\
&\quad \stackrel{\mathclap{\eqref{e:30}}}{=} -N^{-1/2}j_1\mathcal{S}^{j_1-1}(\varphi_1,T,t)\sum_{(\f{y},\ell)}{(\mathcal{F}_\rho(t) \f{e}^d_{h,\f{y},\ell},\nabla_h \phi_{1,h}^t)\m \beta_{(\f{y},\ell)}}\nonumber\\
&\quad \quad + N^{-1}\frac{j_1(j_1-1)}{2}\mathcal{S}^{j_1-2}(\varphi_1,T,t)\sum_{(\f{y},\ell)}{(\mathcal{F}_\rho(t) \f{e}^d_{h,\f{y},\ell},\nabla_h \phi_{1,h}^t)^2\m t}\nonumber\\
&\quad = -N^{-1/2}j_1\mathcal{S}^{j_1-1}(\varphi_1,T,t)\sum_{(\f{y},\ell)}{(\mathcal{F}_\rho(t) \f{e}^d_{h,\f{y},\ell},\nabla_h \phi_{1,h}^t)\m \beta_{(\f{y},\ell)}}\nonumber\\
&\quad \quad + N^{-1}\frac{j_1(j_1-1)}{2}\mathcal{S}^{j_1-2}(\varphi_1,T,t)(\rho^+_h(t),\nabla_h\phi_{1,h}^t\cdot \nabla_h\phi_{1,h}^t)_h\m t.
\end{align}
\emph{Step 3: Inductive step in the index $M$}. Assume the validity of \eqref{e:26aa} for some $M$, some vector of exponents $\boldsymbol{\tilde{j}}$, and some vector of functions $\boldsymbol{\tilde{\varphi}}=(\varphi_1,\dots,\varphi_M)$. For an additional test function $\varphi_{M+1}$ with associated cardinality $j_{M+1}$, define $\boldsymbol{j}:=(\boldsymbol{\tilde{j}},j_{M+1})$ and $\boldsymbol{\varphi}:=(\varphi_1,\dots,\varphi_{M+1})$. We use the It\^o formula for the product of the two real-valued processes 
$\textstyle
\mathcal{S}^{\boldsymbol{\tilde{j}}}_N(\mathcal{I}_h\boldsymbol{\tilde{\varphi}},T,t)=\prod_{m=1}^{M}{\mathcal{S}^{\tilde{j}_m}(\mathcal{I}_h\varphi_m,T,t)}
$ 
and $\mathcal{S}^{j_{M+1}}(\mathcal{I}_h\varphi_{M+1},T,t)$, namely
\begin{align*}
\m \mathcal{S}^{\boldsymbol{j}}(\mathcal{I}_h\boldsymbol{\varphi},T,t) & = \m \left\{\mathcal{S}^{\boldsymbol{\tilde{j}}}_N(\mathcal{I}_h\boldsymbol{\tilde{\varphi}},T,t)\cdot \mathcal{S}^{j_{M+1}}(\varphi_{M+1},T,t)\right\}\\
& = \m \left\{\mathcal{S}^{\boldsymbol{\tilde{j}}}_N(\mathcal{I}_h\boldsymbol{\tilde{\varphi}},T,t)\right\} \mathcal{S}^{j_{M+1}}(\mathcal{I}_h\varphi_{M+1},T,t) \\
& \quad +\mathcal{S}^{\boldsymbol{\tilde{j}}}_N(\mathcal{I}_h\boldsymbol{\tilde{\varphi}},T,t) \m \mathcal{S}^{j_{M+1}}(\mathcal{I}_h\varphi_{M+1},T,t) \\
& \quad + \left\langle \mathcal{S}^{\boldsymbol{\tilde{j}}}_N(\mathcal{I}_h\boldsymbol{\tilde{\varphi}},T,t), \mathcal{S}^{j_{M+1}}(\mathcal{I}_h\varphi_{M+1},T,t) \right\rangle,
\end{align*}
 and take \eqref{e:31} and the inductive hypothesis into account, thus getting
\begin{align*}
& \m \mathcal{S}^{\boldsymbol{j}}(\mathcal{I}_h\boldsymbol{\varphi},T,t) \\
& = -N^{-1/2}\sum_{m=1}^{M}{\tilde{j}_m \mathcal{S}^{\boldsymbol{\tilde{j}}^m}_N(\boldsymbol{\mathcal{I}_h\tilde{\varphi}},T,t)\mathcal{S}^{j_{M+1}}(\mathcal{I}_h\varphi_{M+1},T,t)\sum_{(\f{y},\ell)}{(\mathcal{F}_\rho(t) \f{e}^d_{h,\f{y},\ell},\nabla_h \phi_{m,h}^t)_h\m \beta_{\f{y},\ell}}}\nonumber\\
& \quad + N^{-1}\sum_{k,l=1}^{M}{\frac{(\tilde{j}_k-\delta_{kl})\tilde{j}_l}{2}\mathcal{S}^{\boldsymbol{\tilde{j}}^{kl}}_N(\boldsymbol{\mathcal{I}_h\tilde{\varphi}},T,t)\mathcal{S}^{j_{M+1}}(\mathcal{I}_h\varphi_{M+1},T,t)(\rho_h^+(t),\nabla_h \phi_{k,h}^t\cdot\nabla_h \phi_{l,h}^t)_h}\m t\\
& \quad -N^{-1/2}j_{M+1}\mathcal{S}^{\boldsymbol{\tilde{j}}}_N(\mathcal{I}_h\boldsymbol{\tilde{\varphi}},T,t)\mathcal{S}^{j_{M+1}-1}(\mathcal{I}_h\varphi_{M+1},T,t)\sum_{(\f{y},\ell)}{(\mathcal{F}_\rho(t) \f{e}^d_{h,\f{y},\ell},\nabla_h \phi_{M+1,h}^t)\m \beta_{(\f{y},\ell)}}\nonumber\\
&\quad + N^{-1}\frac{j_{M+1}(j_{M+1}-1)}{2}\mathcal{S}^{\boldsymbol{\tilde{j}}}_N(\mathcal{I}_h\boldsymbol{\tilde{\varphi}},T,t)\mathcal{S}^{j_{M+1}-2}(\mathcal{I}_h\varphi_{M+1},T,t)\\
& \quad \quad \times (\rho^+_h(t),\nabla_h\phi_{M+1,h}^t\cdot \nabla_h\phi_{M+1,h}^t)_h\m t.\\
&\quad + N^{-1}\sum_{m=1}^{M}{\frac{j_{M+1}\tilde{j_m}}{2}\mathcal{S}^{\boldsymbol{\tilde{j}}^m}_N(\mathcal{I}_h\boldsymbol{\tilde{\varphi}},T,t)\mathcal{S}^{j_{M+1}-1}(\mathcal{I}_h\varphi_{M+1},T,t)}\\
& \quad \quad \times(\rho_h^+(t),\nabla_h \phi_{m,h}^t\cdot\nabla_h \phi_{M+1,h}^t)_h\m t,
\end{align*}
which is as prescribed by \eqref{e:26aa}. The proof of \eqref{e:26bb} is analogous, and we omit it.
\end{proof}
\begin{lemma}\label{lem:7} Let $\phi$ solve the heat equation \eqref{eq:28}, with final datum $\phi^T=\varphi$ and $t\leq T$. Let $\rho_h$ be as given in Definition \ref{def:1}.
For any $2\leq j\in\mathbb{N}$, we have
\begin{align}\label{eq:32a}
\max_{t\in[0,T]}{\mean{\left|\mathcal{S}_N(\mathcal{I}_h\varphi,T,t)\right|^{j}}} \leq \left\{2N^{-1} TC\left(d,\rho_{max},\rho_{\min}\right)\max_{t\in[0,T]}\{\|\nabla_h\phi_h^t\|_{\infty}^2\} \right\}^{j/2}j^{3j}.
\end{align}
\end{lemma}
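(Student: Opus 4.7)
The plan is to proceed by induction on $j$ via an It\^o-based moment recursion, treating first even $j$ and then reducing odd $j$ to the even case by a Cauchy--Schwarz step.

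For even $j$, apply the It\^o differential \eqref{e:26aa} of Lemma \ref{lem:10} with $M=1$. Taking the expectation kills the stochastic integral -- which is a genuine martingale since its quadratic variation is pathwise controlled by $N^{-1}\|\nabla_h\phi_h\|_\infty^2 (\rho_h^+,1)_h$, and the latter is bounded by mass conservation (below) together with the polynomial moment bound \eqref{r:2b-a}. One obtains
\[
\mathbb{E}\bigl[\mathcal{S}_N^j(\mathcal{I}_h\varphi,T,t)\bigr] = \frac{j(j-1)}{2N}\int_0^t \mathbb{E}\bigl[\mathcal{S}_N^{j-2}(\mathcal{I}_h\varphi,T,s)\,(\rho_h^+(s),|\nabla_h\phi_h^s|^2)_h\bigr]\,\m s.
\]

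The next step is to estimate the fluctuation factor by $(\rho_h^+(s),|\nabla_h\phi_h^s|^2)_h \leq \|\nabla_h\phi_h^s\|_\infty^2 (\rho_h^+(s),1)_h$. Writing $\rho_h^+ = \rho_h + \rho_h^-$ and observing that $(\rho_h(t),1)_h$ is conserved along \eqref{e:3} -- since $\nabla_h 1 = 0$ annihilates the noise and $(\Delta_h\rho_h,1)_h = 0$ by \eqref{e:1} with $v_h=1$ -- we obtain the pathwise bound $(\rho_h(t),1)_h = (\rho_h(0),1)_h \leq C(d)\rho_{max}$. The residual $(\rho_h^-,1)_h \leq C(d)\|\rho_h^-\|_h$ is super-exponentially small in the scaling regime of Assumption \ref{ass:3} by \eqref{r:3-a}; one isolates it via Cauchy--Schwarz and absorbs it using \eqref{r:2b-a} to control $|\mathcal{S}_N|^{j-2}$ in any moment. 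Setting $A := \max_{t\in [0,T]}\|\nabla_h\phi_h^t\|_\infty^2$ and $u_j(t):=\mathbb{E}[\mathcal{S}_N^j(t)]$, this reduces to the Gr\"onwall-type recursion
\[
u_j(t) \leq \frac{C(d,\rho_{max},\rho_{min})\,A\,j(j-1)}{N} \int_0^t u_{j-2}(s)\,\m s + (\text{negligible tail}).
\]

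Iterating this recursion down to $u_0\equiv 1$ yields $u_j(T) \leq (CTA/N)^{j/2}\,j!/(j/2)! \leq (CTA/N)^{j/2}\,j^{j/2}$, which is sharper than the claimed bound. The looser $j^{3j}$ factor in \eqref{eq:32a} comfortably absorbs (i)~the negligible tail contributions from the $\rho_h^-$ term (handled through \eqref{r:3-a} and \eqref{r:2b-a}), (ii)~the Cauchy--Schwarz slack $\mathbb{E}[|\mathcal{S}_N|^j] \leq \mathbb{E}[\mathcal{S}_N^{2\lceil j/2\rceil}]^{1/2}$ used to pass from even to odd $j$, which already introduces a factor $(2j)^{3j}$, and (iii)~a uniform slack that simplifies the downstream inductive arguments in Section~\ref{ss:rec}.

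The principal technical obstacle is ensuring that the $\rho_h^-$ contribution is absorbed uniformly in $j$: since moments of $|\mathcal{S}_N|$ and $\|\rho_h\|_\infty$ grow polynomially in $j$ (as $j^{j/2}$ and $j^4$ respectively), one must rely essentially on the stretched-exponential decay in \eqref{r:3-a}, which under Assumption \ref{ass:3} dominates any polynomial $j$-factor accumulated through the $j/2$ iteration steps.
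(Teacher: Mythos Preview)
Your recursion idea is sound, but the proposal has a genuine gap in the treatment of the $\rho_h^-$ tail, and it also diverges from the paper's route in a way that creates this gap unnecessarily.

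\textbf{The gap.} After splitting $(\rho_h^+,1)_h = M_0 + (\rho_h^-,1)_h$, you must bound contributions of the form
\[
\frac{j(j-1)A}{2N}\int_0^T \mean{\mathcal{S}_N^{j-2}(\rho_h^-,1)_h}\,\m s.
\]
You isolate $(\rho_h^-,1)_h$ via Cauchy--Schwarz and bound $\mean{|\mathcal{S}_N|^{2(j-2)}}$ through \eqref{r:2a-a}. But \eqref{r:2a-a} carries \emph{no} decay in $N$; it gives $\mean{|\mathcal{S}_N|^{2(j-2)}}^{1/2}\lesssim (Cj^4\|\phi_h\|_{L^1_h})^{j-2}$. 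Hence your tail is of order $N^{-1}\mathcal{E}(N,h)$ times $j$- and $\varphi$-dependent factors. Under Assumption~\ref{ass:3} at its boundary $h\sim C N^{-1/d}|\log N|^{2/d}$, one only has $\mathcal{E}(N,h)\lesssim N^{-K_0}$ for a \emph{fixed} exponent $K_0=K_0(d,\rho_{min},\rho_{max},T)$. The target bound \eqref{eq:32a}, however, decays like $N^{-j/2}$. For $j>2(K_0+1)$ the tail therefore exceeds the target, and the induction does not close uniformly in $j$. Your final paragraph identifies the obstacle as ``polynomial $j$-factors''; the actual obstruction is the $N$-scaling, which the stretched exponential in \eqref{r:3-a} cannot beat once $j$ is large. (A secondary issue: the tail also picks up powers of $\|\phi_h\|_{L^1_h}$ rather than $\|\nabla_h\phi_h\|_\infty$, so the $\varphi$-dependence is wrong as well.)

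\textbf{How the paper avoids this.} The paper applies It\^o to $z\mapsto |z|^j$ directly (valid for all $j\geq 2$, so no separate odd/even treatment is needed), and then---crucially---does \emph{not} split $\rho_h^+=\rho_h+\rho_h^-$. Instead it applies H\"older with exponents $\tfrac{j-1}{j-2}$ and $j-1$ to the product $|\mathcal{S}_N|^{j-2}\cdot(\rho_h^+,|\nabla_h\phi_h|^2)_h$ and bounds $\mean{\|\rho_h^+\|_h^{j-1}}^{1/(j-1)}\leq C(d,\rho_{max},\rho_{min})\,j^4$ via \eqref{r:2b-a}. This produces the clean one-step recursion
\[
\max_{t}\mean{|\mathcal{S}_N|^j}\;\leq\;\{2N^{-1}TAC\}\,j^6\,\Big(\max_t\mean{|\mathcal{S}_N|^{j-1}}\Big)^{(j-2)/(j-1)},
\]
with \emph{no residual tail}, and the induction on $j$ closes immediately with the factor $j^{3j}$. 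The price is the extra $j^4$ from \eqref{r:2b-a}, which is exactly what populates the $j^{3j}$ in the statement.

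\textbf{Minor point.} Your odd-$j$ reduction $\mean{|\mathcal{S}_N|^j}\leq \mean{\mathcal{S}_N^{2\lceil j/2\rceil}}^{1/2}=\mean{\mathcal{S}_N^{j+1}}^{1/2}$ is false as written (take $\mathcal{S}_N$ deterministic and $>1$); you presumably mean either $\mean{|\mathcal{S}_N|^{2j}}^{1/2}$ or $\mean{|\mathcal{S}_N|^{j+1}}^{j/(j+1)}$. In any case, working with $|z|^j$ from the start, as the paper does, makes this detour unnecessary.
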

\begin{proof} It is a straightforward task to modify the computations in \eqref{e:31} by replacing the map $z\mapsto z^{j}$ with the map $z\mapsto  |z|^{j}$. As a result, we get
\begin{align*}
& \m |\mathcal{S}_N(\mathcal{I}_h\varphi,T,t)|^{j} \\
& \quad = -N^{-1/2}j|\mathcal{S}_N(\mathcal{I}_h\varphi,T,t)|^{j-1}(1-2\chi_{\mathcal{S}_N(\varphi,T,t)<0})\sum_{(\f{y},\ell)}{(\mathcal{F}_\rho(t) \f{e}^d_{h,\f{y},\ell},\nabla_h \phi_{h}^t)\m \beta_{(\f{y},\ell)}}\nonumber\\
& \quad \quad + N^{-1}\frac{j(j-1)}{2}|\mathcal{S}_N(\mathcal{I}_h\varphi,T,t)|^{j-2}(\rho^+_h(t),\nabla_h\phi_{h}^t\cdot \nabla_h\phi_{h}^t)_h\m t.
\end{align*}
Taking the expected value, we obtain
\begin{align}\label{eq:42}
& \m \mean{\left|\mathcal{S}_N(\mathcal{I}_h\varphi,T,t)\right|^j} \nonumber\\
& = N^{-1}\frac{j(j-1)}{2}\mean{|\mathcal{S}_N(\mathcal{I}_h\varphi,T,t)|^{j-2}(\rho^+_h(t),\nabla_h\phi_{h}^t\cdot \nabla_h\phi_{h}^t)_h}\m t\nonumber\\
& \leq N^{-1}j(j-1)\mean{\left|\left(\rho^+_h(t),\nabla_h\phi_{h}^t\cdot \nabla_h\phi_{h}^t\right)_h\right|^{j-1}}^{1/(j-1)}\mean{\left|\mathcal{S}_N(\mathcal{I}_h\varphi,T,t)\right|^{j-1}}^{(j-2)/(j-1)}\m t\nonumber\\
& \leq  N^{-1}j(j-1)\|\nabla_h\phi_h^t\|_{\infty}^2\mean{\|\rho_h^+(t)\|_h^{j-1}}^{1/(j-1)}\mean{\left|\mathcal{S}_N(\mathcal{I}_h\varphi,T,t)\right|^{j-1}}^{(j-2)/(j-1)}\m t\nonumber\\
& \stackrel{\mathclap{\eqref{r:2b-a}}}{\leq} \, 2N^{-1}j^4(j-1)^2\|\nabla_h\phi_h^t\|_{\infty}^2C\left(d,\rho_{min},\rho_{max}\right)\mean{\left|\mathcal{S}_N(\mathcal{I}_h\varphi,T,t)\right|^{j-1}}^{(j-2)/(j-1)}\m t.
\end{align}
Taking the supremum in time, \eqref{eq:42} promptly implies
\begin{align}\label{eq:100}
\max_{t\in[0,T]}\mean{\left|\mathcal{S}_N(\mathcal{I}_h\varphi,T,t)\right|^j} & \leq \left\{2N^{-1} TC\left(d,\rho_{min},\rho_{max}\right)\max_{t\in[0,T]}\{\|\nabla_h\phi_h^t\|_{\infty}^2\} \right\}\nonumber\\
& \quad \times j^{6}\left(\max_{t\in[0,T]}\mean{\left|\mathcal{S}_N(\mathcal{I}_h\varphi,T,t)\right|^{j-1}}\right)^{(j-2)/(j-1)}.
\end{align}
We prove \eqref{eq:32a} by induction on $j$. The case $j=2$ is easily settled. Now take $j>2$ and assume the validity of \eqref{eq:32a} for $j-1$. We use \eqref{eq:100} and close off the proof by the estimate
\begin{align*}
&\max_{t\in[0,T]}\mean{\left|\mathcal{S}_N(\mathcal{I}_h\varphi,T,t)\right|^j} \\
& \quad\stackrel{\mathclap{\eqref{eq:100}}}{\leq}  \,\,\left\{2N^{-1} TC\left(d,\rho_{min},\rho_{max}\right)\max_{t\in[0,T]}\{\|\nabla_h\phi_h^t\|_{\infty}^2\} \right\}\\
& \quad\quad\quad \times j^{6}\left(\max_{t\in[0,T]}\mean{\left|\mathcal{S}_N(\mathcal{I}_h\varphi,T,t)\right|^{j-1}}\right)^{\frac{j-2}{j-1}}\\
& \quad\leq \left\{2N^{-1} TC\left(d,\rho_{min},\rho_{max}\right)\max_{t\in[0,T]}\{\|\nabla_h\phi_h^t\|_{\infty}^2\} \right\}\\
& \quad\quad\quad \times \left\{2N^{-1} TC\left(d,\rho_{min},\rho_{max}\right)\max_{t\in[0,T]}\{\|\nabla_h\phi_h^t\|_{\infty}^2 \}\right\}^{(j-2)/2}j^6(j-1)^{3(j-2)}\\
& \quad\leq \left\{2N^{-1} TC\left(d,\rho_{min},\rho_{max}\right)\max_{t\in[0,T]}\{\|\nabla_h\phi_h^t\|_{\infty}^2\} \right\}^{j/2}j^{3j}.\qedhere
\end{align*}
\end{proof}
\begin{corol}\label{cor:1} Let $\Theta$ be as in \eqref{b:7}. Let $\rho_h$ be as given in Definition \ref{def:1}. Given a jndex $\boldsymbol{j}=(j_1,\dots,j_M)$ with $|\boldsymbol{j}|_1=j$ and a set of test functions $\boldsymbol{\varphi}=(\varphi_1,\dots,\varphi_M)\in\left[C^{1+\Theta}\right]^M$, we have
\begin{align}
\max_{t\in[0,T]}{\mean{\left|\mathcal{T}^{\boldsymbol{j}}_N(\boldsymbol{\varphi},T,t)\right|}} & \leq  \left\{N^{-1} T \right\}^{j/2}j^{j}\left(\prod_{m=1}^{M}{\|\nabla\varphi_m\|_{\infty}^{j_m}}\right),\label{eq:41}\\
\max_{t\in[0,T]}{\mean{\left|\mathcal{S}^{\boldsymbol{j}}_N(\mathcal{I}_h\boldsymbol{\varphi},T,t)\right|}} & \leq \left\{2N^{-1} TC\left(d,\rho_{min},\rho_{max}\right)\right\}^{j/2}j^{3j}\left(\prod_{m=1}^{M}{\|\varphi_{m}\|_{C^{1+\Theta}}^{j_m}}\right).\label{eq:40}
\end{align}
\end{corol}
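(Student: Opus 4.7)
The plan is to reduce both estimates to scalar ($M=1$) moment bounds and then paste them together via Hölder's inequality in the product index.

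For \eqref{eq:41} I would first establish a scalar analogue of Lemma~\ref{lem:7} for the particle system. By Lemma~\ref{lem:10} specialised to $M=j_1=1$, the process $\mathcal{T}_N(\varphi,T,\cdot)$ is a driftless It\^o process,
\begin{align*}
\mathrm{d}\mathcal{T}_N(\varphi,T,t)= -N^{-1}\sum_{k=1}^N \nabla\phi^t(\f{w}_k(t))\cdot \mathrm{d}\f{w}_k(t),
\end{align*}
with quadratic variation bounded pathwise by $\|\nabla\phi^t\|_\infty^2/N\,\mathrm{d} t\leq \|\nabla\varphi\|_\infty^2/N\,\mathrm{d} t$ (the last step uses the maximum principle for $\nabla\phi$ on $\mathbb{T}^d$). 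Applying It\^o's formula to $|\mathcal{T}_N|^j$, taking expectations (the martingale term drops), and mimicking the inductive step of Lemma~\ref{lem:7} yields
\begin{align*}
\max_{t\in[0,T]}\mathbb{E}\!\left[|\mathcal{T}_N(\varphi,T,t)|^j\right] \leq \{T/N\}^{j/2}j^{j/2}\|\nabla\varphi\|_\infty^j,
\end{align*}
which is in fact sharper than what \eqref{eq:41} requires. Observing that this argument is strictly simpler than its $\mathcal{S}_N$ counterpart (no indicator on $\rho_h^+$, no $L^j$ bound on $\rho_h$ to invoke), I expect no genuine difficulty here.

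Next I apply generalised Hölder with exponents $p_m=j/j_m$ (so that $\sum 1/p_m = 1$) to obtain
\begin{align*}
\mathbb{E}\!\left[\prod_{m=1}^M|\mathcal{T}_N(\varphi_m,T,t)|^{j_m}\right]
\leq \prod_{m=1}^M \mathbb{E}\!\left[|\mathcal{T}_N(\varphi_m,T,t)|^{j}\right]^{j_m/j},
\end{align*}
insert the scalar bound in each factor, and collect the product $\prod_m\|\nabla\varphi_m\|_\infty^{j_m}$; the resulting estimate implies \eqref{eq:41} (with room to spare in the $j$-dependence).

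For \eqref{eq:40} the same Hölder step applied \emph{directly} to Lemma~\ref{lem:7} gives
\begin{align*}
\mathbb{E}\!\left[\prod_{m=1}^M|\mathcal{S}_N(\mathcal{I}_h\varphi_m,T,t)|^{j_m}\right]
\leq \left\{2N^{-1}TC(d,\rho_{min},\rho_{max})\right\}^{j/2}j^{3j}\prod_{m=1}^M \max_{t\in[0,T]}\|\nabla_h\phi_{m,h}^t\|_\infty^{j_m},
\end{align*}
after collecting the exponents $(j/2)(j_m/j)=j_m/2$ on the squared gradient norms, and $(3j)(j_m/j)=3j_m$ summing to $3j$ on the powers of $j$. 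It then remains to estimate $\max_{t\in[0,T]}\|\nabla_h\phi_{m,h}^t\|_\infty\leq \|\varphi_m\|_{C^{1+\Theta}}$; this is precisely inequality (5005) in the proof of Lemma~\ref{lem:101}, which relies on the discrete maximum principle when $\Theta=0$ and on the Sobolev embedding $H^{d/2+1}\hookrightarrow C^0$ when $\Theta=d/2+1$. Substituting this bound yields \eqref{eq:40}.

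The only non-bookkeeping ingredient is the scalar particle estimate in the first paragraph, but as noted it is a simpler version of Lemma~\ref{lem:7}; everything else is Hölder's inequality and the deterministic heat-flow control already in place. Hence no substantial obstacle is expected.
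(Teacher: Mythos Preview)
Your proposal is correct and follows essentially the same route as the paper: a scalar moment bound (Lemma~\ref{lem:7} for $\mathcal{S}_N$, and its simpler particle analogue for $\mathcal{T}_N$ via the recursion~\eqref{eq:70}), combined via a multifactor H\"older inequality, and finished with the gradient control~\eqref{5005}. The only minor remark is that mimicking the inductive step of Lemma~\ref{lem:7} literally yields $j^{j}$ rather than the $j^{j/2}$ you state for the scalar particle bound, but this is immaterial since \eqref{eq:41} only asks for $j^{j}$.
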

\begin{proof}
Lemma \ref{lem:7} and a multifactor H\"older inequality promptly give the inequality
\begin{align*}
& \max_{t\in[0,T]}{\mean{\left|\mathcal{S}^{\boldsymbol{j}}_N(\mathcal{I}_h\boldsymbol{\varphi},T,t)\right|}}   \leq \left\{N^{-1} TC\left(d,\rho_{min},\rho_{max}\right) \right\}^{j/2}j^{3j}\prod_{m=1}^{M}{\max_{t\in[0,T]}\{\|\nabla_h\phi_{h,m}^t\|_{\infty}\}^{j_m}}.
\end{align*}
Inequality \eqref{eq:40} is then proved by using \eqref{5005}.

Inequality \eqref{eq:41} may be deduced from adapting \eqref{eq:42}. Namely, using the It\^o formula and the maximum principle for the continuous heat equation, we get
\begin{align}\label{eq:70}
\m\mean{\left|\mathcal{T}_N(\varphi,T,t)\right|^j} & \leq N^{-1}j(j-1)\mean{\left|\mathcal{T}_N(\varphi,T,t)\right|^{j-2}\left(N^{-1}\sum_{k=1}^{N}{\left|\nabla\phi(\f{w}_k(t))\right|^2}\right)}\m t\nonumber\\
& \leq N^{-1}j(j-1)\|\nabla\phi^t\|_{\infty}^2\mean{\left|\mathcal{T}_N(\varphi,T,t)\right|^{j-1}}^{(j-2)(j-1)}\m t.\nonumber\\
& \leq N^{-1}j(j-1)\|\nabla\varphi\|_{\infty}^2\mean{\left|\mathcal{T}_N(\varphi,T,t)\right|^{j-1}}^{(j-2)(j-1)}\m t.
\end{align}
We deal with \eqref{eq:70} using the same induction argument deployed for \eqref{eq:100}, and \eqref{eq:41} is proved, again following a multifactor H\"older inequality.
\end{proof}

\section{Finite element discretisations}\label{FEM_Appendix}

\subsection{Notation}\label{ss:notassb}

For $h>0$, we split $\mathbb{T}^d$, $d=2,3$, according to a standard admissible triangulation $\mathscr{T}_h$, namely
$
\mathbb{T}^d=\bigcup_{K\in\mathscr{T}_h}{K},
$
where $h$ bounds the diameter of each polyhedron $K$.  
We assume the triangulation to be regular
and quasi-uniform (see \cite[Chapter 1, Definition 1.30]{elman2014finite} or \cite[Section 3.1]{QuarteroniValli}). 
For $p\in\mathbb{N}$, let $X^{p}_h$ be the space of continuous finite elements of order $p$ defined on the triangulation $\mathscr{T}_h$. Furthermore, let $\mathcal{R}_h$ be the Ritz operator \cite[(5)]{nitsche1979}. Finally, the symbol $\|\cdot\|$ (respectively, $(\cdot,\cdot)$) denotes the standard $L^2$-norm (respectively, the standard $L^2$-inner product).

\subsection{Assumptions and Dean--Kawasaki model}

\begin{customthm}{FE1}[Brownian particle system]\label{ass:2b} 
This is the same as Assumption \ref{ass:2}, but with the interpolation operator $\mathcal{I}_h$ replaced by the Ritz operator $\mathcal{R}_h$.
\end{customthm}
\begin{customthm}{FE2}[Scaling of parameters]\label{ass:3b}
This is the same as Assumption \ref{ass:3}.
\end{customthm}
\begin{customthm}{FE3}[Mean-field limit]\label{ass:4b} The solution to the discrete heat equation 
\begin{equation}
 \label{e:8b}
  \left\{\quad
    \begin{aligned}
    \!\!\!\!\! \partial_t (\overline{\rho}_h,f_h)&= -\frac{1}{2}\left(\nabla \overline{\rho}_h, \nabla f_h\right),\qquad \forall f_h \in X^{p}_h,\\
    \!\!\!\!\! \overline{\rho}_h(0)&= \rho_{0,h},
    \end{aligned}
  \right.
\end{equation}
is such that $\rho_{min}\leq \overline{\rho}_h \leq \rho_{max}$ (where $\rho_{min}$ and $\rho_{max}$ have been introduced in Assumption \ref{ass:2b}) for all times up to $T$ (where $T$ has have been introduced in Assumption \ref{ass:3b}).\end{customthm}

We now introduce our finite-element discretisation of the Dean--Kawasaki equation \eqref{DeanKawasaki}.

\begin{customthm2}{FE-DK}[Finite element Dean--Kawasaki model of order $p+1$]\label{def:1b}
We say that the $X^p_h$-valued process $\rho_h$ solves a $(p+1)$-th order finite element Dean--Kawasaki model if it solves
\begin{equation}
     \left\{
    \begin{aligned}\label{eq:702}
    \m\left(\rho_h, f_h \right) & = -\frac{1}{2}\left(\nabla \rho_h,\nabla f_h\right)\m t - N^{-1/2}\m \mathcal{W}(\rho^+_h, f_h ),\qquad \forall f_h \in X^{p}_h,  \\
      \rho_h(0) & = \rho_{0,h},
    \end{aligned}
    \right.
\end{equation}
where $N^{-1/2}\mathcal{W}(\rho_h^+,e_i)$ is a real-valued martingale with quadratic variation given by
\begin{align}\label{b:5}
\left\langle N^{-1/2}\mathcal{W}(\rho_h^+(t),\phi_{1,h}),N^{-1/2}\mathcal{W}(\rho_h^+(t),\phi_{2,h})\right\rangle=N^{-1}\left(\rho^+_h(t),\nabla \phi_{1,h}\cdot \nabla \phi_{2,h}\right).
\end{align}
\end{customthm2}
\begin{rem}
Unlike in the finite-difference case, we can only provide an explicit representation of the Dean--Kawasaki noise in the case $p=1$. This is due to the fact noise is nonlinear, and only preserves piece-wise constant functions (these being gradients of test functions in $X_h^{p}$, $p=1$).
We are not aware of any finite-dimensional representation of the martingale term $N^{-1/2}\mathcal{W}(\rho^+_h,e_i)$ in \eqref{eq:702} in the case $p>1$.
\end{rem}
We now present the finite element counterparts of Theorems \ref{main1}--\ref{main2}.
\begin{theorem}[Accuracy of description of fluctuations by the finite element discretised Dean--Kawasaki model of order $p+1\in\mathbb{N}$]\label{main1_FEM}  
Assume the validity of Assumptions \ref{ass:2b}--\ref{ass:3b}. 
Let $\rho_h$ be the solution of the discretised Dean--Kawasaki model given in Definition \ref{def:1b} on the time interval $[0,T]$. 
Set 
\begin{align}\label{e:907}
\nu_p(h) := \left\{ \begin{array}{rl}
1+|\ln(h)|, & \mbox{if } p=1, \\ 
1, & \mbox{if } p>1. 
\end{array}
\right.
\end{align}
Then, for any $j\in \mathbb{N}$, 
the discrete Dean--Kawasaki model \ref{def:1b} captures the fluctuations of the empirical measure $\mu^N$ in the sense that, for any $\boldsymbol{T}=(T_1,\ldots,T_M)\in [0,T]^M$ with $0\leq T_1\leq \dots\leq T_M$, the following inequality
\begin{align*}
&d_{-(2j-1)}\left[
N^{1/2}
\begin{pmatrix}
 \int_{\domain}{\rho_h(T_1)\mathcal{R}_h\varphi_1\emph{\m}\f{x}}
\\
\vdots
\\
 \int_{\domain}{\rho_h(T_M)\mathcal{R}_h\varphi_M\emph{\m}\f{x}}
\end{pmatrix}
,~
N^{1/2}
\begin{pmatrix}
\langle\mu^N_{T_1}, \varphi_1\rangle
\\
\vdots
\\
\langle\mu^N_{T_M}, \varphi_M\rangle
\end{pmatrix}
\right]
\\&~~~~~~~~~~~~~~~~~~~
\leq C(M,p,j,\|\f{\varphi}\|_{W^{p+j+3,\infty}}, \rho_{min},\rho_{max},\f{T}) {\mean{\sup_{t\in[0,T]}\|\rho_h^-(t)\|_h^2}^{1/2}}
\\&~~~~~~~~~~~~~~~~~~~~~~
+ C(M,p,j,\|\f{\varphi}\|_{W^{p+j+3,\infty}}, \rho_{min},\rho_{max},\f{T})\nu^2_p(h) h^{p+1}
\\&~~~~~~~~~~~~~~~~~~~~~~
+ C(M,p,j,\|\f{\varphi}\|_{W^{p+j+3,\infty}}, \rho_{min},\rho_{max},\f{T}) N^{-j/2}
\\&~~~~~~~~~~~~~~~~~~~
=: \mathrm{Err}_{neg} + \mathrm{Err}_{num} + \mathrm{Err}_{fluct,rel}
\end{align*}
holds for any $\boldsymbol{\varphi}=(\varphi_1,\dots,\varphi_M)\in [W^{p+j+3,\infty}(\mathbb{T}^d)]^M$ such that $\|\varphi_m\|_{L^2}=1$ for all $m=1,\dots,M$ and $\int_{\mathbb{T}^d}{\varphi_k\varphi_l\emph{\m}\f{x}}=0$ whenever $T_k=T_l$.
Finally, we have the bound 
$$
{\mean{\sup_{t\in[0,T]}\|\rho_h^-(t)\|_h^2}^{1/2}} \leq C\mathcal{E}\!\left(N,h\right),
$$
where $\mathcal{E}\!\left(N,h\right)$ has been defined in \eqref{b:8}.
\end{theorem}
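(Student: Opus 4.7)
The plan is to mirror the proof of Theorem~\ref{main1} by establishing a finite-element analogue of Proposition~\ref{prop:recursive} and iterating it $j$ times. The key structural observation is that the cross-variation identity \eqref{b:5} that \emph{defines} the FEM noise plays exactly the role of \eqref{e:7} in the finite-difference analysis: testing the FEM Dean--Kawasaki equation \eqref{eq:702} against the semi-discrete backward heat flow $\phi_h^t\in X_h^p$ satisfying $-\partial_t(\phi_h^t,f_h)=\tfrac{1}{2}(\nabla\phi_h^t,\nabla f_h)$ for all $f_h\in X_h^p$ with final datum $\phi_h^T=\mathcal{R}_h\varphi$, and evolving a backward nonlinear test functional $\psi^t$ by \eqref{EvolutionPsi}, the It\^o computation of Step~3 of Proposition~\ref{prop:recursive} carries over with purely notational changes: $\mathcal{I}_h$ is replaced by $\mathcal{R}_h$, the discrete inner product $(\cdot,\cdot)_h$ by the standard $L^2$-inner product on $X_h^p$, and the explicit noise decomposition by \eqref{b:5}. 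This yields a recursive identity perfectly paralleling \eqref{InductiveEquationDistributionDK}.

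The main technical work lies in estimating $\mathrm{Err}_{num}$, which requires a finite-element counterpart of Lemma~\ref{lem:101}. Classical FEM theory (Nitsche-type $L^\infty$ error estimates, together with Schatz--Wahlbin bounds for the parabolic dual problem) yields
\begin{align*}
\|\phi^t-\phi_h^t\|_{L^\infty}\leq C\nu_p(h)h^{p+1}\|\varphi\|_{W^{p+2,\infty}}, \qquad \|\nabla(\phi^t-\phi_h^t)\|_{L^\infty}\leq C\nu_p(h)h^{p}\|\varphi\|_{W^{p+2,\infty}},
\end{align*}
with the logarithmic factor $\nu_p(h)=1+|\ln h|$ only needed in the borderline case $p=1$. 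A product-rule argument then gives
\begin{align*}
\|\nabla\phi_{1,h}^t\cdot\nabla\phi_{2,h}^t-\mathcal{R}_h(\nabla\phi_1^t\cdot\nabla\phi_2^t)\|_{L^\infty}\leq C\nu^2_p(h)h^{p+1}\|\varphi_1\|_{W^{p+2,\infty}}\|\varphi_2\|_{W^{p+2,\infty}},
\end{align*}
which accounts for the $\nu^2_p(h)$ prefactor in the statement. The a~posteriori bound $\mean{\sup_t\|\rho_h^-(t)\|_h^2}^{1/2}\leq C\mathcal{E}(N,h)$ is obtained by repeating the Doob-martingale argument of Subsection~\ref{ss:exp}, replacing the nodal basis by the Ritz representation of pointwise evaluation in $X_h^p$ and again appealing to \eqref{b:5} in place of the explicit noise expansion; the scaling \eqref{eq:202} in Assumption~\ref{ass:3b} ensures that the resulting exponentials control the missing positivity.

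The hard part will be rephrasing every estimate that previously exploited the explicit finite-difference noise basis $\{\f{e}^d_{h,\f{y},\ell}\}$ in terms of the purely martingale-based relation \eqref{b:5}, which is all that is available for $p>1$. This is nevertheless feasible because It\^o's formula only ever requires quadratic cross-variations, so the recursive structure of \emph{Block 1} in Subsection~\ref{ideas} applies verbatim. Once the FEM analogue of Proposition~\ref{prop:recursive} is in place, the same inductive iteration as in Subsection~\ref{ss:p1}, combined with the mollification step $\psi\leadsto\eta_{N^{-j/2}}\ast\psi$ that absorbs the residual error into the $N^{-j/2}$ term, produces the target fluctuation rate and closes the argument; the regularity requirement $\varphi_m\in W^{p+j+3,\infty}$ arises from one extra derivative per iteration step (as in the FD case) together with the sharper $W^{p+2,\infty}$-regularity needed by the FEM error estimates above.
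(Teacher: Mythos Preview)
Your overall strategy is correct and is what the paper has in mind: Appendix~\ref{FEM_Appendix} only \emph{states} the FEM results and leaves the proof as an adaptation of the finite-difference argument, so mirroring Proposition~\ref{prop:recursive} with $\mathcal{I}_h\to\mathcal{R}_h$, $(\cdot,\cdot)_h\to(\cdot,\cdot)_{L^2}$, and invoking the cross-variation \eqref{b:5} in place of the explicit noise expansion is exactly the intended route.

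There is, however, a genuine gap in your treatment of $\mathrm{Err}_{num}$. You assert
\[
\|\nabla\phi_{1,h}^t\cdot\nabla\phi_{2,h}^t-\mathcal{R}_h(\nabla\phi_1^t\cdot\nabla\phi_2^t)\|_{L^\infty}\leq C\nu_p^2(h)\,h^{p+1}
\]
by a ``product-rule argument,'' but your own gradient bound $\|\nabla(\phi^t-\phi_h^t)\|_{L^\infty}\leq C\nu_p(h)h^{p}$ combined with the product rule yields only $O(h^p)$: the decomposition $\nabla\phi_{1,h}\cdot\nabla\phi_{2,h}-\nabla\phi_1\cdot\nabla\phi_2=\nabla(\phi_{1,h}-\phi_1)\cdot\nabla\phi_{2,h}+\nabla\phi_1\cdot\nabla(\phi_{2,h}-\phi_2)$ loses one full power of $h$. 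This is precisely where the FEM case diverges from the FD case: in Lemma~\ref{lem:101} the discrete-gradient estimate \eqref{eq:325b} is already $O(h^{p+1})$ because Assumption~\ref{ass:1} forces $\partial_{h,x_\ell}$ itself to be a $(p{+}1)$-th order operator, a structural feature the Galerkin gradient does not enjoy. Recovering $h^{p+1}$ in the FEM setting therefore requires an additional idea---for instance, exploiting that the offending term only appears paired with $\rho_h$ under a time integral and integrating by parts $(\rho_h\nabla\phi_l,\nabla e_k)=-(\nabla\rho_h\cdot\nabla\phi_l+\rho_h\Delta\phi_l,\,e_k)$ to trade the gradient error for the $L^2$ error $\|e_k\|_{L^2}=O(h^{p+1})$; but this introduces $\|\nabla\rho_h\|_{L^2}$, whose control is itself nontrivial and must come from a time-averaged energy estimate for \eqref{eq:702}. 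Carrying this out carefully---and tracking the extra derivatives on $\phi_l$ it consumes---is the substantive work that distinguishes the FEM proof from a straight transcription of the FD argument, and it is also what accounts for the stronger regularity requirement $\f{\varphi}\in W^{p+j+3,\infty}$ in the statement.
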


\begin{theorem}[Estimates on the error for stochastic moments] 
In the same setting of Theorem~\ref{main1_FEM},
fix times $\f{T}=(T_1,\dots,T_M)\in[0,T]^M$, a vector $\boldsymbol{j}=(j_1,\dots,j_M)$ with $j:=|\boldsymbol{j}|_1$, and a vector $\boldsymbol{\varphi}=(\varphi_1,\dots,\varphi_M)\in [W^{p+j+2,\infty}]^M$.

Then the difference of moments between $\rho_h$ and the empirical density $\mu^N$ \eqref{EmpiricalMeasure}  reads
\begin{align}\label{eq:63}
&\quad \left|\mean{\prod_{m=1}^{M}{\left(N^{1/2}\int_{\mathbb{T}^d}{(\rho_h(T_m)-\mean{\rho_h(T_m)})\mathcal{R}_h\varphi_{m}\emph{\m}\f{x}}\right)^{j_m}}}\right. \nonumber\\
&\quad \quad \quad\quad \left.-\mean{\prod_{m=1}^{M}{[N^{1/2} \langle \mu^N_{T_m}-\mean{\mu^N_{T_m}},\varphi_{m}\rangle]^{j_m}}}
\right|\nonumber\\
& \quad \leq \left\{C(C+\rho_{max}) \right\}^{j/2}\left[\prod_{m=1}^{M}{q(T_m)^{j_m/2}}\right]j^{C_1j+C_2}\nonumber\\
& \quad\quad\quad\quad \times\left[\prod_{m=1}^{M}{\|\varphi_m\|_{W^{p+j+2,\infty}}^{j_m}}\right] {\mean{\sup_{t\in[0,T]}\|\rho_h^-(t)\|_h^2}^{1/2}}\nonumber\\
& \quad \quad + h^{p+1}\nu^2_k(h)\left\{C(C+\rho_{max})\right\}^{j/2}\left[\prod_{m=1}^{M}{q(T_m)^{j_m/2}}\right]j^{C_3j+C_4}\left[\prod_{m=1}^{M}{\|\varphi_m\|_{W^{p+j+2,\infty}}^{j_m}}\right]\nonumber\\
& \quad =: \mathrm{Err}_{neg} + \mathrm{Err}_{num},
\end{align}
for some positive constants $C,C_1,\dots,C_4$ independent of $j$, $h$, $N$, and $T$, where $q$ is a polynomial vanishing at $0$, and where we have the bound 
$$
{\mean{\sup_{t\in[0,T]}\|\rho_h^-(t)\|_h^2}^{1/2}} \leq C\mathcal{E}\!\left(N,h\right),
$$
where $\mathcal{E}\!\left(N,h\right)$ has been defined in \eqref{b:8}.
\end{theorem}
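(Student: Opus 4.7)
The plan is to mimic the proof of Theorem~\ref{main2} step by step, with all discrete inner products $(\cdot,\cdot)_h$ replaced by the $L^2$ inner product $(\cdot,\cdot)$ and the interpolation operator $\mathcal{I}_h$ replaced by the Ritz operator $\mathcal{R}_h$. Concretely, I will define
\begin{align*}
\mathcal{S}_N(\mathcal{R}_h\varphi,T,t) := (\rho_h(t),\phi_{h}^t) - (\rho_h(0),\phi_{h}^0),\qquad
\mathcal{T}_N(\varphi,T,t) := \langle \mu_t^N,\phi^t\rangle - \langle \mu_0^N,\phi^0\rangle,
\end{align*}
where $\phi^t$ solves the continuous backwards heat equation \eqref{eq:28} with final datum $\varphi$ and $\phi_h^t\in X_h^p$ solves the FEM backwards heat equation associated with \eqref{e:8b} with final datum $\mathcal{R}_h\varphi$. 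The multi-index versions $\mathcal{S}_N^{\f{j}}$, $\mathcal{T}_N^{\f{j}}$, and the quantity $\mathcal{D}(\f{j},\f{\varphi},T)$ are defined exactly as in Subsection~\ref{ss:rec}.

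\textbf{Key intermediate steps.} First I would prove a FEM analog of Lemma~\ref{lem:10}: using It\^o's formula together with \eqref{eq:702} and \eqref{b:5}, one obtains an identity where the quadratic-variation contribution is $N^{-1}(\rho_h^+(t),\nabla\phi_{k,h}^t\cdot\nabla\phi_{l,h}^t)\,\m t$, which I split as $(\rho_h,\cdot)+(\rho_h^-,\cdot)$. Second, I would prove the FEM analogs of Lemma~\ref{thm:1} and Proposition~\ref{thm:2}, writing
\begin{align*}
(\rho_h^+,\nabla\phi_{k,h}^t\cdot\nabla\phi_{l,h}^t)
= (\rho_h(t),\mathcal{R}_h(\nabla\phi_k^t\cdot\nabla\phi_l^t)) + (\rho_h(t),r_{k,l,h}^t) + (\rho_h^-,\nabla\phi_{k,h}^t\cdot\nabla\phi_{l,h}^t),
\end{align*}
where $r_{k,l,h}^t:= \nabla\phi_{k,h}^t\cdot\nabla\phi_{l,h}^t - \mathcal{R}_h\{\nabla\phi_k^t\cdot\nabla\phi_l^t\}$. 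The first term produces, via the evolution to $t=0$, a lower-order moment (of order $j-1$) with a new test function $\nabla\phi_k^t\cdot\nabla\phi_l^t$, plus a term of order $j-2$ gained when this first term is evaluated via Ritz projection back to the initial time (using the analog of \eqref{eq:401a} for $\mathcal{R}_h$). The remaining terms feed into $\mathrm{Err}_{neg}$ and $\mathrm{Err}_{num}$.

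\textbf{Error bounds and the role of $\nu_p(h)$.} The main obstacle is quantifying $r_{k,l,h}^t$ in $L^2$ and controlling $\|\nabla\phi_{h}^t\|_{L^\infty}$, as both quantities appear in the recursion in exactly the way that $\|\nabla\phi_{h}^t\|_\infty$ and the analog of \eqref{eq:49} do in the finite difference proof. For $p=1$, the best $L^\infty$-stability of the Ritz projection and of the FEM backward heat semigroup carries the logarithmic factor $1+|\ln h|=\nu_1(h)$ (Schatz--Wahlbin), while for $p>1$ such stability holds without logarithmic losses. Replacing the two occurrences of $\|\nabla_h\phi_h^t\|_\infty$ and $\|r_{k,l,h}^t\|_h$ in the finite difference estimates by their FEM counterparts therefore brings in exactly a factor $\nu_p(h)^2$ in the numerical error, which matches the form of the target inequality \eqref{eq:63}. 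The FEM analog of \eqref{r:3-a} for the negative part $\rho_h^-$ can be established along the same lines as in Subsection~\ref{ss:exp} using the mass matrix instead of the diagonal $(\cdot,\cdot)_h$; together with Assumption~\ref{ass:3b}, this gives the $\mathcal{E}(N,h)$-decay on $\mathrm{Err}_{neg}$.

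\textbf{Closing the induction.} With the FEM recursive identity at hand, I would interpret it exactly as in Section~\ref{ss:end}: after $K$ steps of the recursion, accumulate a bound of the form \eqref{e:36} on $\mathcal{R}_K$, where each step can either replace a pair $(\varphi_k,\varphi_l)$ by $\nabla\varphi_k\cdot\nabla\varphi_l$ (costing one derivative) or delete them at the cost of a prefactor $\|\varphi_k\|\,\|\varphi_l\|$ (now in an $L^\infty$/Sobolev norm, since there is no discrete maximum principle in the FEM setting, so $\Theta$ effectively is replaced by the Sobolev exponent needed for $W^{1,\infty}$-stability of $\mathcal{R}_h$ and of the FEM heat flow). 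Summing over $K=0,\dots,j-2$ and applying the moment bounds \eqref{eq:40}--\eqref{eq:41} (with their obvious FEM versions, obtained by repeating the Itô argument of Lemma~\ref{lem:7} now using \eqref{b:5} directly) yields the claimed inequality \eqref{eq:63}. The hardest technical point will be providing the FEM moment bound $\mean{\sup_t\|\rho_h\|_{L^\infty}^j}^{1/j}\lesssim j^4$, which replaces \eqref{r:2b-a} and is needed to estimate $\|\rho_h\cdot r_{k,l,h}^t\|$; this is the step where the logarithm in $\nu_1(h)$ actually enters, via the inverse inequality used to pass from an $L^2$-bound to an $L^\infty$-bound on $\rho_h$ in $X_h^1$.
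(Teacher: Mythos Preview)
Your proposal is correct and matches the paper's approach: the paper itself does not give a separate proof of this finite element theorem, merely stating it in Appendix~\ref{FEM_Appendix} as the FEM counterpart of Theorem~\ref{main2}, so the intended argument is precisely the adaptation you outline --- replace $(\cdot,\cdot)_h$ by $(\cdot,\cdot)$, $\mathcal{I}_h$ by $\mathcal{R}_h$, and rerun Subsections~\ref{ss:rec}--\ref{ss:end} with the FEM backward heat flow.

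One small correction on the bookkeeping of the logarithmic factor: the $\nu_p(h)^2$ in $\mathrm{Err}_{num}$ does not come from an inverse inequality applied to $\rho_h$. The moment bound on $\sup_t\|\rho_h\|_{L^\infty}$ (the FEM analog of \eqref{r:2b-a}) is obtained, as in Subsection~\ref{ss:exp}, by testing against a discrete Dirac/Green's function and using the energy estimate analogous to \eqref{EstimateEnergyDiscreteDiracHeatEquation}; this step is log-free. The two factors of $\nu_p(h)$ instead both arise on the \emph{test-function} side: one from the $W^{1,\infty}$-stability of the FEM backward heat semigroup (controlling $\|\nabla\phi_{h}^t\|_{L^\infty}$, the FEM replacement for \eqref{5005}), and one from the $L^\infty$ Ritz-projection error that enters the bound on $r_{k,l,h}^t$ (the FEM replacement for \eqref{eq:49}). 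Both are Schatz--Wahlbin/Nitsche-type estimates carrying a $1+|\ln h|$ for $p=1$ only, exactly as you said earlier in your proposal.
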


{\bfseries Acknowledgements}. We thank the anonymous referee for his/her careful reading of the manuscript and valuable suggestions. FC gratefully acknowledges funding from the Austrian Science Fund (FWF) through the project F65, and from the European Union's Horizon 2020 research and innovation programme under the Marie Sk\l{}odowska-Curie grant agreement No. 754411. \\

{\bfseries Data availability statement}. The datasets generated and analysed during the current study are available from the corresponding author on reasonable request.\\

{\bfseries Conflict of interest statement}. The authors have no relevant financial or non-financial interests to disclose.

\bibliographystyle{abbrv}
\bibliography{fluctuations_NEW}

\end{document}